\documentclass[a4paper]{article}

\usepackage{graphicx}
\usepackage{amsmath, amssymb, amsfonts, amsthm}
\usepackage{hyperref}
\usepackage{xspace}
\usepackage{color}
\usepackage{marginnote}
\usepackage{fancybox}
\usepackage{tikz-cd}

\usepackage{pgfplots}
\pgfplotsset{compat=newest}
\usepgfplotslibrary{fillbetween}

\def\R{{\mathbb R}}

\def\al{\alpha}
\def\be{\beta}
\def\la{\lambda}

\DeclareMathOperator{\e}{e}
\DeclareMathOperator{\im}{im}
\DeclareMathOperator{\sign}{sign}
\DeclareMathOperator{\rank}{rank}
\DeclareMathOperator{\diag}{diag}
\DeclareMathOperator{\lineal}{lineal}

\newcommand{\trans}{\mathsf{T}} 
\newcommand{\id}{\mathrm{I}}
\newcommand{\bd}{\partial}
\newcommand{\dd}[2]{\frac{\mathrm{d} #1}{\mathrm{d} #2}}
\newcommand{\pd}{\mathcal{D}_>}

\newcommand{\ce}[1]{{\sf{#1}}}

\newcommand{\had}{\circ} 
\newcommand{\hads}{\odot} 
\newcommand{\xa}{x^*} 

\newcommand{\EE}{\mathcal{E}}
\newcommand{\Ac}{\mathcal{A}_{k,\EE}}
\newcommand{\Acc}{\mathcal{A}'_{k,\EE}}
\newcommand{\ST}{\mathcal{\tilde S}_{k,\EE}}
\newcommand{\CE}{\mathcal{\tilde C}_{\EE}}
\newcommand{\JJ}{\mathcal{J}}

\newcommand{\x}{\circ}
\newcommand{\xx}{{23}}
\newcommand{\xxx}{{12}}
\newcommand{\xxxx}{{34}}


\newtheorem{thm}{Theorem}
\newtheorem{pro}[thm]{Proposition}
\newtheorem{lem}[thm]{Lemma}
\newtheorem{cor}[thm]{Corollary}
\newtheorem{fac}[thm]{Fact}
\theoremstyle{definition}
\newtheorem{dfn}[thm]{Definition}
\newtheorem{exa}[thm]{Example}
\newtheorem{rem}[thm]{Remark}

\makeatletter
\def\blfootnote{\xdef\@thefnmark{}\@footnotetext}
\makeatother

\newcommand{\red}[1]{{\color{red}#1}}
\newcommand{\trueblue}[1]{{\color{blue}#1}}
\newcommand{\green}[1]{{\color{green}#1}}

\newcommand{\blue}[1]{{#1}}

\parindent0ex
\parskip1ex


\begin{document}

\title{%
Sufficient conditions for linear stability \\
of complex-balanced equilibria \\
in generalized mass-action systems
}

\author{
Stefan~M\"uller, Georg~Regensburger
}


\maketitle

\begin{abstract}
\noindent
Generalized mass-action systems are power-law dynamical systems arising from chemical reaction networks.
Essentially, every nonnegative ODE model used in chemistry and biology (for example, in ecology and epidemiology) 
and even in economics and engineering can be written in this form. 
Previous results have focused on existence and uniqueness of special steady states (complex-balanced equilibria) for all rate constants,
thereby ruling out multiple (special) steady states.
Recently, necessary conditions for linear stability have been obtained. 

In this work, we provide sufficient conditions for the linear stability of complex-balanced equilibria for all rate constants
(and also for the non-existence of other steady states). 
In particular, via sign-vector conditions (on the stoichiometric coefficients and kinetic orders), 
we guarantee that the Jacobian matrix is a $P$-matrix. 
Technically, we use a new decomposition of the graph Laplacian which allows to consider orders of (generalized) monomials.
Alternatively, we use cycle decomposition which allows a linear parametrization of all Jacobian matrices.
In any case, we guarantee stability without explicit computation of steady states.
We illustrate our results in examples from chemistry and biology: 
generalized Lotka-Volterra systems and SIR models, a two-component signaling system, and an enzymatic futile cycle.

\smallskip \noindent
{\bf Keywords:}
chemical reaction networks, generalized mass-action kinetics, 
polynomial/power-law dynamical systems,
stability, $P$-matrix

\end{abstract}

\blfootnote{
\scriptsize

\noindent
{Stefan~M\"uller} \\
Faculty of Mathematics, University of Vienna, Oskar-Morgenstern-Platz 1, 1090 Wien, Austria \\[1ex]
{Georg Regensburger} \\
Institut f\"ur Mathematik, Universit\"at Kassel, Germany \\[1ex]
Corresponding author: 
\href{mailto:st.@univie.ac.at}{st.mueller@univie.ac.at}
}


\section{Introduction}


A chemical reaction network (CRN) describes interactions 
between a set of 
species based on a directed graph. 
Under the assumption of (generalized) mass-action kinetics (MAK/GMAK),
it leads to polynomial/power-law dynamical systems for the species concentrations.
Importantly, these (generalized) mass-action systems reach far beyond chemistry and biology.
In fact, every polynomial/power-law model used in ecology and epidemiology
and even in economy and engineering 
can be written in this form. 
The relative simplicity of these systems is appealing for dynamical modeling,
parameter estimation, qualitative analysis, and optimal control.
In applications, it is extremely useful to have strong results that depend mainly on the graph structure of the model,
but not on the parameter values. 

%
%

Indeed, mass-action systems that permit special steady states, so-called complex-balanced equilibria (CBE),
display remarkably robust dynamics.
If one positive equilibrium is complex-balanced, then so is every other equilibrium, 
and all \blue{positive} equilibria are asymptotically stable (as shown via the classical entropy-like Lyapunov function)~\cite[Theorem~6A]{HornJackson1972}.
Moreover,
there exists a unique positive equilibrium in every forward-invariant set (stoichiometric class)~\cite[Lemma~4B]{HornJackson1972}.
Finally, a mass-action system admits a CBE, {\em for all rate constants}, 
if and only if the components of the graph are strongly connected (the network is weakly reversible) 
and the deficiency $\delta=m-\ell-s$ is zero (with numbers of vertices~$m$, components~$\ell$, and independent species~$s$)~\cite[Theorem~4A]{Horn1972}.

However, MAK is an assumption that holds for elementary reactions in homogeneous and dilute solutions.
In intracellular environments, which are highly structured and crowded, and for reaction mechanisms,
more general kinetics are needed. 
A natural extension is GMAK (in the version of~\cite{MuellerRegensburger2014}), 
where the kinetic orders may differ from the stoichiometric coefficients,
leading to power-law dynamical systems. 
(For a reaction $1\ce{X}+1\ce{Y}\to\ce{Z}$, the rate is not $\sim \! x^1y^1$, but $\sim \! x^\al y^\be$ with arbitrary, not necessarily integer $\al,\be$.)
Besides the limited validity of the mass-action ``law'',
a CRN with MAK may not have zero deficiency and may not be weakly reversible,
but there may be a {\em dynamically equivalent}, ``translated'' CRN with GMAK that has the desired properties~\cite{Johnston2014,Johnston2015,TonelloJohnston2018,JohnstonMuellerPantea2019}.

%

For the resulting generalized mass-action systems, existence and uniqueness of CBE
 (in every stoichiometric class and for all rate constants) are well understood~\cite{MuellerRegensburger2012,MuellerRegensburger2014,Mueller2016,MuellerHofbauerRegensburger2019,CraciunMuellerPanteaYu2019},
in particular, generalizations of the deficiency zero theorem have been provided,
see also the discussion in Section~\ref{sec:previous}.
However, much less is known about the stability of CBE.
Even planar S-systems with a unique CBE
display rich dynamical behavior, including super/sub-critical or degenerate Hopf bifurcations,
centers, and up to three limit cycles, see~\cite{BorosHofbauerMueller2017,BorosHofbauerMuellerRegensburger2018,BorosHofbauerMuellerRegensburger2019,BorosHofbauer2019}.


For certain classes of networks and monotonically increasing kinetics,
``robust'' (often piecewise linear) Lyapunov functions have been constructed~\cite{BlanchiniGiordano2014,AlRadhawiAngeli2016,AlRadhawi2020}
that guarantee ``structural attractivity'' (unique globally asymptotically stable steady states).
However, no Lyapunov functions are known for CBE of arbitrary generalized mass-action systems.
Recently, the stability problem has been approached by linearization,
and necessary conditions for linear stability (for all rate constants) have been obtained~\cite{BorosMuellerRegensburger2020}.

%

In this work, we provide sufficient conditions for the linear stability of CBE for all rate constants
and also for the preclusion of other steady states.
In the case of mass-action systems, 
\blue{asymptotic stability and absence of non-complex-balanced equilibria were shown via the same method (a Lyapunov function),
whereas linear stability was proven using different methods~\cite{SiegelJohnston2008,Feinberg2019,BorosMuellerRegensburger2020}.}
In the case of generalized mass-action systems, 
we show \blue{linear stability of CBE and preclusion of other steady states} via a new decomposition of the graph Laplacian 
and orders of the generalized monomials.
The foundation of our stability results is a theorem by Carlson~\cite{Carlson1974},
which states that a sign-symmetric $P$-matrix is stable.
The theorem holds for matrices with full rank, and we extend it to stability on a linear subspace.

Ultimately, we obtain sign-vector conditions on the stoichiometric coefficients and kinetic orders
(as in our existence and uniqueness results)
to guarantee that the Jacobian matrix of a CBE is a $P$-matrix.
On the one hand,
we do not strictly extend the classical results
since, even in mass-action systems, the Jacobian matrix of a CBE need not be a $P$-matrix, cf.~\cite{Banaji2007}.
On the other hand,
our main results do not just hold for a given generalized mass-action system,
but for all systems (with the same stoichiometric and kinetic-order data)
based on a graph with the same components, but arbitrary edge set.


Finally,
for a reader interested in modeling,
we provide two examples (Lotka-Volterra systems and SIR models)
in order to demonstrate the expressivity of generalized mass-action systems.
The examples are small, but fundamental models of ecological and epidemiological processes.
They allow to illustrate our results most of which hold for systems of arbitrary dimension.
A reader interested in the technical exposition may skip the examples
and continue with the subsections `\nameref{sec:org}' and `\nameref{sec:not}' at the end of this introduction.

{\em Example I.} The Lotka-Volterra predator-prey system
\[
\dd{x}{t} = b \cdot x - c \cdot x y
\quad \text{and} \quad
\dd{y}{t} = +c \cdot x y - d \cdot y
\]
with abundance of prey and predator, $x$ and $y$, respectively,
birth rate (of prey)~$b$, death rate (of predator)~$d$, and interaction rate~$c$
can be written as a chemical reaction network assuming MAK:
$\ce{X\to2X}$, $\ce{X+Y\to2Y}$, and $\ce{Y\to 0}$. 
Note that the first reaction has the same net effect as $\ce{0\to X}$,
but with kinetics $\sim \! x^1$ (which we encode as $1\,\ce{X}$).
Further, the second reaction has the same net effect as $\ce{X\to Y}$,
but with kinetics $\sim \! x^1 y^1$ (which we encode as $1\,\ce{X}+1\,\ce{Y}$). 
Using GMAK, we can write the system as
\[
\begin{tikzcd}[ampersand replacement=\&]
\mbox{\ovalbox{$\begin{array}{c} \ce{0} \\ (\ce{X}) \end{array}$}} \arrow[r,"b"] \& \mbox{\ovalbox{$\begin{array}{c} \ce{X} \\ (\ce{X}+\ce{Y}) \end{array}$}} \arrow[r,xshift=+0ex,"c"] \&
\mbox{\ovalbox{$\begin{array}{c} \ce{Y} 
\end{array}$}} \; , \arrow[ll,"d",bend left=25]
\end{tikzcd}
\]
where we put the kinetic orders in brackets 
(if they differ from the stoichiometric coefficients).

Whereas the MAK system has deficiency $\delta=6-3-2=1$ (6~vertices in 3~connected components and 2~independent variables)
and is not weakly reversible (the network components are not strongly connected),
the GMAK system has deficiency $\delta=3-1-2=0$ and is weakly reversible.
Of course, the classical Lotka-Volterra system is not stable.
In Example~\ref{exa:Lotka}, we consider more general kinetics,
namely $\sim \! x^\al$ for $\ce{X\to2X}$ and $\sim \! x y^\be$ for $\ce{X+Y\to2Y}$ 
(for the birth and interaction processes).
First, deficiency zero and weak reversibility allow to apply (generalizations of) classical existence and uniqueness results.
Second, for $\al,\be<1$, results from this work guarantee the linear stability of the unique positive steady state 
for all rate constants $b,c,d$.

{\em Example II.} The SIR epidemiology model (for constant population size)
\[
\dd{x}{t} = b - c \cdot x y - d \cdot x
\quad \text{and} \quad
\dd{y}{t} = c \cdot x y - r \cdot y - d \cdot y
\]
with fractions of susceptible and infected individuals, $x$ and $y$, respectively, 
birth and death rates~$b=d$, infection rate~$c$, and recovery rate~$r$
can also be written as a chemical reaction network assuming MAK:
$\ce{0\to X}$, $\ce{X+Y\to2Y}$, $\ce{X\to 0}$, and $\ce{Y\to 0}$. 
Again, we can rewrite the system using GMAK,
\[
\begin{tikzcd}[ampersand replacement=\&]
\mbox{\ovalbox{$\begin{array}{c} \ce{X} 
\end{array}$}} \arrow[r,"d"] \&
\mbox{\ovalbox{$\begin{array}{c} \ce{0} 
\end{array}$}} \arrow[r,"b"] \& \mbox{\ovalbox{$\begin{array}{c} \ce{X} \\ (\ce{X}+\ce{Y}) \end{array}$}} \arrow[r,xshift=+0ex,"c"] \arrow[ll,"*"', bend right=20] \&
\mbox{\ovalbox{$\begin{array}{c} \ce{Y} 
\end{array}$}} \; , \arrow[ll,"r+d",bend left=25]
\end{tikzcd}
\]
where the ``reaction'' $\ce{X \overset{*}{\to} X}$ does not contribute to the dynamics, but makes the network weakly reversible.
Further, the network has effective deficiency $\delta'=3-1-2=0$
(4~vertices, but only 3~different stoichiometric data ($\ce{0,X,Y}$) in 1~connected component and 2~independent variables),
cf.~\cite{JohnstonMuellerPantea2019}.
An explicit analysis shows that,
only for $R_0 = c / (r+d) > 1$, there exists a positive steady state (the epidemic equilibrium), which is linearly stable.
Again, we consider more general kinetics,
in particular, $\sim \! x y^\be$ for $\ce{X+Y\to2Y}$ (the infection process).
For $\be<1$, existence and uniqueness results apply,
and results from this work guarantee the linear stability of the unique positive steady state (the epidemic equilibrium) for all rate constants $b,c,d,r$.

Obviously, the qualitative dynamics of the classical SIR model is not robust
with respect to small perturbations of the ``infection law'' (from $\sim \! x y$ to $\sim \! x y^\be$).
An epidemiological interpretation of this fact is beyond the scope of this work.


\paragraph*{Organization of the work.} \label{sec:org}

In Section~\ref{sec:gmas}, we introduce generalized mass-action-systems.
In particular, in \ref{sec:previous}, we list previous results on existence, uniqueness, and stability of CBE
and define relevant notions of stability (on a linear subspace).
In~\ref{sec:new}, we introduce a new decomposition of the graph Laplacian,
and in~\ref{sec:rewrite} we rewrite the Jacobian matrix using the new decomposition and cycle decomposition.
In~\ref{sec:orders}, we consider monomial evaluation orders and hyperplane arrangements.
Regions with given monomial order are faces of a hyperplane arrangement and can be represented by sign-vectors of a linear subspace.

In Section~\ref{sec:main}, we present our main results.
As a first step, in~\ref{sec:noother}, 
we use the new decomposition of the graph Laplacian and monomial evaluation orders
to rule out steady states other than CBE.
In~\ref{sec:stab}, we use the same methods to guarantee linear stability of CBE
in case of a full dimensional stoichiometric subspace.
Finally, in~\ref{sec:linear}, we consider the general case.
Here, we use two alternative strategies: 
either a transformation of the dynamical system to independent variables  
or a generalization of standard stability results to matrices without full rank.
For all cases, we provide examples from the biochemical literature.
Finally, in~\ref{sec:sign}, we discuss relations between the sign vector conditions in our main results.

In the \nameref{app}, 
we provide basic results regarding 
order theory and oriented matroids.


\paragraph*{Notation.} \label{sec:not}

We denote the positive real numbers by $\R_>$ and the nonnegative real numbers by $\R_\ge$. 
We write $x>0$ for $x \in \R^n_>$ and $x \ge 0$ for $x \in \R^n_\ge$.
For vectors $x,y \in \R^n$, we denote their scalar product by $x \cdot y$ and their componentwise (Hadamard) product by $x \had y$.
For $x \in \R^n_>$ and $y \in \R^n$, we define the monomial $x^y = \prod_{i=1}^n x_i^{y_i} \in \R_>$.

For a vector $x \in \R^n$,
we obtain the sign vector $\sign(x) \in \{-,0,+\}^n$ by applying the sign function componentwise.
For a subset $S \subseteq \R^n$,
we write
\[
\sign(S) = \{ \sign(x) \mid x \in S \} \subseteq \{-,0,+\}^n .
\]
Conversely, for a sign vector $\sigma \in \{-,0,+\}^n$,
we write
$
\sign^{-1}(\sigma) = \{ x \in \R^n \mid \sign(x) \in \sigma \} 
$
for the corresponding open (in general, not full-dimensional) orthant.
Finally, we write
$
\Sigma(S) = \sign^{-1}(\sign(S)) 
$
for the union of all orthants that $S$ intersects.
Clearly,
$\Sigma(S) = 
\bigcup_{D \in \pd} D(S)$,
where $\pd \subset \R^{n \times n}_\ge$ is the set of diagonal matrices with positive diagonal.




The product on $\{-,0,+\}$ is defined in the obvious way.
For $\sigma,\tau \in \{-,0,+\}^n$, 
we define the componentwise product  $\sigma \hads \tau \in \{-,0,+\}^n$ as $(\sigma \hads \tau)_i = \sigma_i \tau_i$.
The inequalities $0<-$ and $0<+$ induce a partial order on $\{-,0,+\}^n$:
for sign vectors $\sigma, \tau \in \{-,0,+\}^n$, we write $\sigma \le \tau$ if the inequality holds componentwise.
In particular, $\sigma \hads \tau \ge 0$ if and only if $\sigma_i \tau_i \ge 0$ for all $i$
($\sigma$ and $\tau$ are ``harmonious'').
Correspondingly, we write $\sigma \hads \tau > 0$ if $\sigma \hads \tau \ge 0$ and $\sigma \hads \tau \neq 0$
($\sigma$ and $\tau$ are ``harmonious'' and ``overlapping'').
Finally, for $\mathcal{T} \subseteq \{-,0,+\}^n$, we introduce the lower closure
$
\mathcal{T}^\downarrow = 
\{ \sigma 
\mid \sigma \le \tau \text{ for some } \tau \in \mathcal{T} \} 
$
and the total closure
\[
\mathcal{T}^\updownarrow = 
\{ \sigma 
\mid \sigma \le \tau \text{ or } \sigma \ge \tau \text{ for some nonzero } \tau \in \mathcal{T} \} .
\]
For fundamental results on sign vectors of linear subspaces,
see Appendix~\ref{app:sign}.




\section{Generalized mass-action systems} \label{sec:gmas}

A {\em generalized chemical reaction network} $(G,y,\tilde y)$ is given by a simple directed graph $G=(V,E)$
and two maps $y \colon V \to \R^n$ and $\tilde y \colon V_s \to \R^n$.
Every vertex $i \in V$
is labeled with a {\em (stoichiometric) complex} $y(i) \in \R^n$, 
and every source vertex $i \in V_s \subseteq V$ is labeled with a {\em kinetic-order complex} $\tilde y(i) \in \R^n$.
\blue{In particular,} $G=(V,E)$ has $\ell$ connected components $G_\la = (V_\la, E_\la)$, $\la=1,\ldots,\ell$.
If every component of $G$ is strongly connected, we call $G$ and $(G,y,\tilde y)$ {\em weakly reversible}.

A {\em generalized mass-action system} $(G_k,y,\tilde y)$ is a generalized chemical reaction network $(G,y,\tilde y)$
together with edge labels $k \in \R^E_>$, resulting in the labeled digraph $G_k$.
Every edge $(i \to i') \in E$, representing the {\em chemical reaction} ${y(i) \to y(i')}$, is labeled with a {\em rate constant} $k_{i \to i'} > 0$.
If $G$ is weakly reversible, we call $G_k$ and $(G_k,y,\tilde y)$ weakly reversible.

The ODE system for the {\em species concentrations} $x \in \R^n_>$,
associated with the generalized mass-action system $(G_k,y,\tilde y)$, is given by
\begin{equation} \label{ode1}
\dd{x}{t} = \sum_{(i \to i') \in E} k_{i \to i'} \, x^{\tilde y(i)} \big( y(i')-y(i) \big) .
\end{equation}
The sum ranges over all reactions, 
and every summand is a product of the reaction rate $k_{i \to i'} \, x^{\tilde y(i)}$, involving a monomial determined by the kinetic-order complex of the educt,
and the difference $y(i')-y(i)$ of the stoichiometric complexes of product and educt.

Let $I_E, I_E^s \in \R^{V \times E}$ be the incidence and source matrices
of the digraph~$G$, 
and $A_k = I_E \diag(k) (I_E^s)^\trans \in \R^{V \times V}$ be the Laplacian matrix of the labeled digraph $G_k$.
(Explicitly, $(I_E)_{i,j \to j'} = -1$ if $i=j$, $(I_E)_{i,j \to j'} = 1$ if $i=j'$, and $(I_E)_{i,j \to j'} = 0$ otherwise.
Further, $(I_{E,s})_{i,j \to j'} = 1$ if $i=j$ and $(I_{E,s})_{i,j \to j'} = 0$ otherwise.
Finally, $(A_k)_{i,j} = k_{j \to i}$ if $(j \to i) \in E$, $(A_k)_{i,i} = - \sum_{(i \to j)\in E} k_{i \to j}$, and $(A_k)_{i,j} = 0$ otherwise.)

Now, 
the right-hand-side of the ODE system~\eqref{ode1} can be decomposed into stoichiometric, graphical, and kinetic-order contributions,
\begin{equation} \label{ode2}
\dd{x}{t} = Y I_E \diag(k) (I_{E,s})^\trans \, x^{\tilde Y} = Y A_k \, x^{\tilde Y} ,
\end{equation}
where $Y,\, \tilde Y \in \R^{n \times V}$ are the matrices of stoichiometric and kinetic-order complexes,
and $x^{\tilde Y} \in \R^V_>$ denotes the vector of monomials,
that is, $(x^{\tilde Y})_i = x^{\tilde y(i)}$.
(Note that columns of $\tilde Y$ corresponding to non-source vertices can be chosen arbitrarily,
since they do not specify the rate of any reaction.)

Clearly, the change over time lies in the {\em stoichiometric subspace}
\begin{equation}
S = \im (Y I_E) ,
\end{equation}
that is, $\dd{x}{t} \in S$.
Equivalently, trajectories are confined to cosets of $S$,
that is, $x(t) \in x(0)+S$.
For $x' \in \R^n_>$, the set $(x'+S) \cap \R^n_>$ is called a {\em stoichiometric class}.
The (stoichiometric) {\em deficiency} is given by
\begin{equation}
\delta 
= \dim(\ker Y \cap \im I_E) 
= m - \ell - \dim(S) ,
\end{equation}
where $m=|V|$ is the number of vertices, 
and $\ell$ is the number of connected components of the digraph,
\blue{cf.~\cite{Feinberg1972}.}

Analogously, we introduce the {\em kinetic-order subspace}
\begin{equation}
\tilde S = \im (\tilde Y I_E) 
\end{equation}
and the {\em kinetic(-order) deficiency} 
\begin{equation}
\tilde \delta
= \dim(\ker \tilde Y \cap \im I_E)
= m - \ell - \dim(\tilde S) .
\end{equation}

A steady state $x \in \R^n_>$ of the ODE system~\eqref{ode2} that fulfills
\begin{equation}
A_k \, x^{\tilde Y} = 0
\end{equation}
is called a positive {\em complex-balanced equilibrium} \blue{(CBE).}

The Jacobian matrix of the right-hand-side of~\eqref{ode2} is given by
\begin{equation} \label{J}
J(x) = Y A_k \diag(x^{\tilde Y}) \, \tilde Y^\trans \diag(x^{-1}) .
\end{equation}
For a generalized mass-action system $(G_k,y,\tilde y)$,
we study whether, for all rate constants $k$, 
every positive CBE $x$ is linearly stable (in its stoichiometric class),
that is, the corresponding Jacobian matrix $J(x)$ is stable on $S$.





\subsection{Previous results on complex-balanced equilibria} \label{sec:previous}

In this work, we consider (sufficient conditions for) the linear stability of CBE.
Previous results concerned existence and uniqueness of CBE
(in every stoichiometric class, for all rate constants)
and necessary conditions for linear stability.

In a first result, we characterized uniqueness of CBE 
via sign vectors of the stoichiometric and kinetic-order subspaces.

\begin{thm}[\cite{MuellerRegensburger2012}, Proposition~3.1] \label{thm:unique}
Let $(G_k,y,\tilde y)$ be a weakly reversible generalized mass-action system.
There exists at most one CBE in every stoichiometric class, for all rate constants, 
if and only if 
$\sign(S) \cap \sign(\tilde S^\perp) \neq \{0\}$.
\end{thm}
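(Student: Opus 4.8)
The plan is to prove Theorem~\ref{thm:unique} by characterizing exactly when two distinct complex-balanced equilibria can lie in one stoichiometric class, and the whole argument takes place in logarithmic coordinates. Write $\phi(x)\in\R^V_>$ for the monomial vector, $\phi(x)_i=x^{\tilde y(i)}$, so that $x>0$ is a CBE iff $A_k\phi(x)=0$. For weakly reversible $G$ the matrix--tree theorem gives, on each strongly connected component $G_\la$, a one-dimensional strictly positive kernel ray $\kappa^\la$ of $A_k$; hence $A_k\phi(x)=0$ forces $\phi(x)|_{V_\la}$ to be proportional to $\kappa^\la$. Taking logarithms, these proportionalities become the linear conditions $(\tilde y(b)-\tilde y(a))\cdot\log x=\text{const}(k)$ ranging over the edges $(a\to b)$, and since the difference vectors $\tilde y(b)-\tilde y(a)$ span $\tilde S=\im(\tilde Y I_E)$, the set of CBE is the $\log$-preimage of a single affine coset $u_0(k)+\tilde S^\perp$. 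A stoichiometric class is a coset $x'+S$. Thus the two structural constraints on a pair of CBE in one class are: their difference lies in $S$, and the difference of their logarithms lies in $\tilde S^\perp$.

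First I would run the forward implication: if uniqueness can fail, then $\sign(S)\cap\sign(\tilde S^\perp)\neq\{0\}$. Let $x\neq x'$ be two CBE in one stoichiometric class. Put $z:=x-x'\in S$ and $w:=\log x-\log x'\in\tilde S^\perp$. The key step is that $\sign(z)=\sign(w)$ componentwise, because $t\mapsto\log t$ is strictly increasing, so $x_i>x'_i$ iff $\log x_i>\log x'_i$. Since $x\neq x'$, the common sign vector $\sigma:=\sign(z)=\sign(w)$ is nonzero, and by construction $\sigma\in\sign(S)\cap\sign(\tilde S^\perp)$. Hence whenever two distinct CBE share a class, the intersection is nontrivial.

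The converse, which I expect to be the main obstacle, is to manufacture two distinct CBE from a nonzero $\sigma\in\sign(S)\cap\sign(\tilde S^\perp)$. Fix $z\in S$ and $w\in\tilde S^\perp$ with $\sign(z)=\sign(w)=\sigma$. I would set $x:=x'\had\e^{w}$ (componentwise exponential), which forces $\log x-\log x'=w\in\tilde S^\perp$ for free, and then pin down the still-free positive vector $x'$ so as to land the difference in $S$: for each index with $w_i\neq0$ take $x'_i:=z_i/(\e^{w_i}-1)$, positive precisely because $\sign(z_i)=\sign(w_i)=\sign(\e^{w_i}-1)$, and for $w_i=0$ (hence $z_i=0$) let $x'_i$ be any positive value; then $x-x'=z\in S$ and $x\neq x'$. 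It remains to realize both points as CBE for one common rate constant $k$, and since they already lie in the same $\tilde S^\perp$-coset it suffices to make $x'$ complex-balanced. Writing $v:=\phi(x')>0$, I need $A_kv=0$; the substitution $\tilde k_{a\to b}:=k_{a\to b}\,v_a$ converts this into flow conservation at every vertex, i.e.\ a positive circulation on each component, which exists exactly because weak reversibility makes each component strongly connected. With this $k$ both $x'$ and $x$ are CBE in a single stoichiometric class.

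Combining the two directions, uniqueness for all rate constants fails precisely when $\sign(S)\cap\sign(\tilde S^\perp)\neq\{0\}$, which is the sign-vector criterion of Theorem~\ref{thm:unique}. The delicate point throughout is the realization step in the converse: one must secure positivity of the constructed $x'$ and simultaneously the existence of positive rate constants making it complex-balanced, and both hinge on strong connectivity of the components. As a consistency check, ordinary mass-action systems have $\tilde Y=Y$, so $\tilde S=S$ and the oriented-matroid orthogonality $\sign(S)\cap\sign(S^\perp)=\{0\}$ (a nonzero common sign vector would force $z\cdot w>0$ for orthogonal $z,w$) places them in the trivial-intersection case, recovering the classical uniqueness of Birch.
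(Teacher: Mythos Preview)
Your argument is correct and is essentially the standard proof of this result (the paper does not supply its own proof here---Theorem~\ref{thm:unique} is merely quoted from~\cite{MuellerRegensburger2012}). Both directions match the approach used in the source: the forward direction is exactly the observation that $\sign(x-x')=\sign(\log x-\log x')$ for positive $x,x'$, and your converse construction (choosing $x'_i=z_i/(\e^{w_i}-1)$ on the support of $\sigma$, then realizing $x'$ as a CBE by producing a positive circulation on each strongly connected component) is the same realization argument the paper invokes elsewhere, cf.~the reference to \cite[Lemma~1, Proof]{MuellerRegensburger2014} in the proof of Theorem~\ref{thm:stab_unique}.

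One point you should flag explicitly rather than silently absorb: the statement of Theorem~\ref{thm:unique} as printed in the paper contains a typo. It reads ``$\ldots$ if and only if $\sign(S)\cap\sign(\tilde S^\perp)\neq\{0\}$'', but the correct condition for \emph{uniqueness} is $\sign(S)\cap\sign(\tilde S^\perp)=\{0\}$, as is clear from Theorems~\ref{thm:existsunique} and~\ref{thm:robust}, Proposition~\ref{pro:S}, and Section~\ref{sec:sign} of the same paper (and from the original~\cite{MuellerRegensburger2012}). Your proof establishes precisely this corrected version: you show that uniqueness \emph{fails} for some rate constants iff the sign intersection is nontrivial. Your closing sentence (``uniqueness $\ldots$ fails precisely when $\ldots\neq\{0\}$, which is the sign-vector criterion of Theorem~\ref{thm:unique}'') conflates the two, so state plainly that the printed ``$\neq$'' should be ``$=$''.
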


For surveys on the uniqueness of equilibria and related injectivity results,
see~\cite{Mueller2016, BanajiPantea2016, FeliuMuellerRegensburger2019}.
In a second result, we characterized existence (for all rate constants, but not in every stoichiometric class).

\begin{thm}[\cite{MuellerRegensburger2014}, Theorem~1] \label{thm:exists}
Let $(G_k,y,\tilde y)$ be a generalized mass-action system.
There exists a CBE, for all rate constants,
if and only if 
$\tilde \delta=0$ and the network is weakly reversible.
\end{thm}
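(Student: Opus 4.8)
The plan is to linearize the complex-balancing condition by passing to logarithms and to reduce the existence question to a statement in linear algebra about two subspaces of $\R^V$. Writing the condition $A_k\,x^{\tilde Y}=0$ with $x^{\tilde Y}>0$, I first record the structure of $\ker A_k$. Since $A_k=I_E\diag(k)(I_{E,s})^\trans$ is the graph Laplacian, a positive vector lies in $\ker A_k$ exactly when the associated edge flux $w$, with $w_{i\to i'}=k_{i\to i'}(x^{\tilde Y})_i$, is a positive circulation ($I_E w=0$); and a graph carries a positive circulation if and only if it is weakly reversible. This already yields the necessity of weak reversibility. When $G$ is weakly reversible, the matrix-tree theorem gives, for each component $G_\la$, a positive kernel vector supported on $V_\la$, so $\dim\ker A_k=\ell$ and the positive part of $\ker A_k$ equals $\exp\!\big(\log\kappa(k)+U\big)$, where $\kappa(k)>0$ is any reference kernel vector and $U=\{v\in\R^V:\ v\text{ is constant on each }V_\la\}$ records the independent rescaling of each component.

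Next I would take logarithms of the monomial map: since $\log(x^{\tilde Y})=\tilde Y^\trans\log x$ and $\log x$ ranges over all of $\R^n$, the log-image of $\{x^{\tilde Y}\mid x>0\}$ is exactly $L:=\im\tilde Y^\trans$. Hence a CBE exists for a given $k$ precisely when $(\log\kappa(k)+U)\cap L\neq\emptyset$, i.e.\ when $\log\kappa(k)\in L+U$. The key observation is that $U=(\im I_E)^\perp$ (the component indicators span the cokernel of $I_E$) and $L^\perp=\ker\tilde Y$, so
\[
(L+U)^\perp=\ker\tilde Y\cap\im I_E,
\qquad\text{whence}\qquad
\tilde\delta=\dim(L+U)^\perp=m-\dim(L+U).
\]
Thus $\tilde\delta=0$ is equivalent to $L+U=\R^V$. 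For the ``if'' direction this closes the argument immediately: weak reversibility supplies a positive $\kappa(k)$ for every $k$, and $L+U=\R^V$ forces $\log\kappa(k)\in L+U$, so a CBE exists for all $k$.

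For the converse I need to turn ``existence for all $k$'' into the global statement $L+U=\R^V$. The plan is to show that the map $\Phi\colon\R^E\to\R^V/U$, $\log k\mapsto\log\kappa(k)\bmod U$, is surjective; then $\log\kappa(k)\in L+U$ for all $k$ forces $(L+U)/U=\R^V/U$, i.e.\ $\tilde\delta=0$. Surjectivity I would prove by a circulation parametrization: given a target $v\in\R^V$, fix any positive circulation $w_0$ (available by weak reversibility) and set $\log k:=\log w_0-(I_{E,s})^\trans v$; a direct computation shows $A_k\exp(v)=I_E w_0=0$, so $\exp(v)$ is a positive kernel vector and $\log\kappa(k)\equiv v\pmod U$. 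I expect this surjectivity step --- understanding the full range of the kernel vector $\kappa(k)$ as the rate constants vary, rather than for a fixed $k$ --- to be the main obstacle, since it is where the polynomial (matrix-tree) dependence of $\kappa$ on $k$ must be controlled; the circulation reformulation is what makes it tractable. Combining the two directions with the necessity of weak reversibility established in the first step completes the proof.
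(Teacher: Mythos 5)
The paper itself contains no proof of this statement: it is quoted verbatim as Theorem~1 of the cited reference \cite{MuellerRegensburger2014}, so there is no internal argument to compare against. Judged on its own, your proof is correct and is in substance the standard proof of that cited result. All three pillars check out: (1) positivity of $x^{\tilde Y}$ turns $A_k\,x^{\tilde Y}=0$ into the existence of a positive circulation on $G$, which forces weak reversibility; (2) for weakly reversible $G$, the matrix-tree theorem makes the positive part of $\ker A_k$ a single coset $\exp\big(\log\kappa(k)+U\big)$ with $U=(\im I_E)^\perp$ the span of the component indicators, so a CBE exists for a given $k$ iff $\log\kappa(k)\in L+U$ with $L=\im \tilde Y^\trans$; and (3) the identity $(L+U)^\perp=\ker\tilde Y\cap\im I_E$ shows $\tilde\delta=0 \iff L+U=\R^V$, which settles the ``if'' direction. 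Your circulation trick for the converse --- given $v\in\R^V$, set $k_{i\to i'}=(w_0)_{i\to i'}\,e^{-v_i}$ so that $A_k\exp(v)=I_E w_0=0$ --- is a clean and correct way to show that every coset of $U$ arises as $\log\kappa(k)+U$ for some $k$. This surjectivity step is exactly what the cited reference handles via its Lemma~1 (any positive vector is a kernel vector of $A_k$ for suitable rate constants), a fact the present paper itself invokes in its new proof of Theorem~\ref{thm:stab_unique}. So the route coincides with the original one; the only difference is cosmetic, namely that you parametrize positive kernel vectors by circulations rather than by the tree constants $K_k$.
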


Most importantly, we were able to characterize (combined) existence and uniqueness.

\begin{thm}[\cite{MuellerHofbauerRegensburger2019}, Theorem~45] \label{thm:existsunique}
Let $(G_k,y,\tilde y)$ be a generalized mass-action system.
There exists exactly one CBE in every stoichiometric class, for all rate constants, 
if and only if
$G$ is weakly reversible, ${\delta=\tilde \delta =0}$, $\sign(S) \cap \sign(\tilde S^\perp) = \{0\}$,
and two more conditions on $S$ and $\tilde S$ hold,
cf.~(ii) and (iii) in \cite[Theorem~14]{MuellerHofbauerRegensburger2019}.
\end{thm}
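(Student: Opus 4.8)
The plan is to decompose the combined assertion ``exactly one CBE in every stoichiometric class, for all rate constants'' into an existence part and a uniqueness-per-class part, and to reduce the existence-in-every-class part to a bijectivity statement for a monomial map that can be analyzed by sign vectors.

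First I would fix the rate constants $k$ and describe the set of positive CBE geometrically. For weakly reversible $G$ the Laplacian $A_k$ has rank $m-\ell$, so $\ker A_k$ is $\ell$-dimensional; by the Matrix-Tree theorem its positive part $\ker A_k \cap \R^V_>$ is a single monomial coset whose logarithm is $\log c^*(k) + L$, with $L=\ker I_E^\trans$ spanned by the $\ell$ component indicators. Since $x$ is a CBE iff $x^{\tilde Y}\in\ker A_k$, taking logarithms and using $L=\ker I_E^\trans$ together with $\tilde S^\perp=\ker(\tilde Y I_E)^\trans$ shows that, once one CBE $\xa$ exists, the entire CBE set for the given $k$ is
\[
\{\, x>0 : \log x - \log\xa \in \tilde S^\perp \,\}.
\]
Existence of at least one such $\xa$, for all $k$, is exactly Theorem~\ref{thm:exists}: it is equivalent to weak reversibility together with $\tilde\delta=0$. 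I would use this as a necessary condition in one direction and as the input guaranteeing a base point $\xa$ in the other.

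The heart of the argument is then a generalized Birch statement: the map sending a CBE to its stoichiometric class,
\[
\{\, x>0 : \log x - \log\xa \in \tilde S^\perp \,\} \longrightarrow \R^n_>/S, \qquad x \mapsto x+S,
\]
is a bijection precisely under sign-vector conditions on $S$ and $\tilde S$. Its injectivity is the ``at most one CBE per class'' property, which by Theorem~\ref{thm:unique} is equivalent to $\sign(S)\cap\sign(\tilde S^\perp)=\{0\}$; this supplies that condition in both directions. Its surjectivity is the genuinely new ingredient. A dimension count (domain of dimension $n-\dim\tilde S$, codomain of dimension $n-\dim S$) forces $\dim S=\dim\tilde S$, equivalently $\delta=\tilde\delta$; together with $\tilde\delta=0$ this yields $\delta=\tilde\delta=0$. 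Beyond matching dimensions, covering every class imposes the two further sign-vector conditions~(ii) and~(iii) on $S$ and $\tilde S$.

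The step I expect to be the main obstacle is the surjectivity half of this Birch-type result. Injectivity is already delivered by Theorem~\ref{thm:unique}, but showing that the monomial parametrization \emph{covers} every stoichiometric class --- and, conversely, pinning down the exact obstruction when it does not --- is a degree-theoretic, oriented-matroid argument rather than a routine computation. Concretely, I would pass to logarithmic coordinates, where the map becomes a proper smooth map between affine spaces of equal dimension, and analyze its Brouwer degree cell by cell over the hyperplane arrangement determined by $S$ and $\tilde S$; nonvanishing degree on every cell guarantees surjectivity, and I would match this requirement to conditions~(ii) and~(iii). For necessity, whenever (ii) or (iii) fails I would construct rate constants~$k$ and a stoichiometric class containing no CBE. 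The sign-vector bookkeeping translating the degree condition into (ii) and (iii) is the technical crux.
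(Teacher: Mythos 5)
First, a framing point: the paper does not prove this statement at all --- it is imported verbatim as a previous result (Theorem~45 of \cite{MuellerHofbauerRegensburger2019}), with the decisive conditions (ii) and (iii) left unstated and referenced only by pointer to \cite[Theorem~14]{MuellerHofbauerRegensburger2019}. So there is no in-paper proof to compare against; your attempt has to be judged as a reconstruction of the cited work. On that score, your architecture is the right one and does match the strategy of the reference: the monomial parametrization of the CBE set as $\{x>0 : \ln(x/x^*)\in\tilde S^\perp\}$ (exactly as derived in Section~\ref{sec:new}), existence of a base point for all $k$ via Theorem~\ref{thm:exists}, uniqueness per class for all $k$ via Theorem~\ref{thm:unique}, and the reduction of ``at least one CBE in every class'' to surjectivity of a Birch-type map between the CBE manifold and the set of stoichiometric classes.

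The genuine gap is that the entire content of the theorem beyond Theorems~\ref{thm:unique} and~\ref{thm:exists} --- namely that surjectivity, for all rate constants, is equivalent to the two extra conditions (ii) and (iii) --- is not proved but only announced: ``analyze the Brouwer degree cell by cell \ldots and match this requirement to conditions (ii) and (iii)'' is a plan, not an argument, and since (ii) and (iii) are never written down (neither in your proposal nor in the present paper), there is nothing concrete to match them to. This equivalence is precisely the hard part of \cite{MuellerHofbauerRegensburger2019}, where it is carried out via properness/global-inversion theorems and oriented-matroid arguments for families of exponential maps; the necessity direction (constructing $k$ and a stoichiometric class without CBE when (ii) or (iii) fails) is likewise only declared in your sketch. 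Two smaller issues: a bare bijection between the CBE set and $\R^n_>/S$ does not force $\dim S=\dim\tilde S$ --- you need continuity plus invariance of domain (or the smooth structure) to rule out dimension mismatch; and your degree argument would have to track the dependence of the base point $x^*$ on $k$, since the quantifier ``for all rate constants'' sits outside the bijectivity claim.
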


Moreover, we characterized {\em robust} existence and uniqueness via a {\em simple} sign vector condition.

\begin{thm}[\cite{MuellerHofbauerRegensburger2019}, Theorem~46] \label{thm:robust}
Let $(G_k,y,\tilde y)$ be a generalized mass-action system.
There exists exactly one CBE in every stoichiometric class,
for all rate constants and for all small perturbations of the kinetic orders, 
if and only if
$G$ is weakly reversible, $\delta=\tilde \delta =0$, and $\sign(S) \subseteq \sign(\tilde S)^\downarrow$.
\end{thm}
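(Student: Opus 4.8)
The plan is to derive the statement from Theorem~\ref{thm:existsunique} by sorting its conditions according to which survive small perturbations of the kinetic orders. First I would note that weak reversibility and $\delta=0$ depend only on the digraph $G$ and on $Y$, hence are untouched by a perturbation of $\tilde y$, while $\tilde\delta=0$ amounts to $\tilde Y I_E$ having full column rank $m-\ell$ and is therefore an open condition, preserved under small perturbations. So these three requirements are automatically robust, and since $\delta=\tilde\delta=0$ forces $\dim S=\dim\tilde S=m-\ell$, a perturbation keeps $\dim\tilde S'=\dim\tilde S$ fixed while moving $\tilde S'$ continuously in the Grassmannian. The remaining content is thus to show that, under these standing assumptions, the existence--uniqueness conditions of Theorem~\ref{thm:existsunique} hold for $\tilde S$ and all nearby $\tilde S'$ \emph{if and only if} $\sign(S)\subseteq\sign(\tilde S)^\downarrow$.

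For sufficiency I would first prove a monotonicity lemma: if $\dim\tilde S'=\dim\tilde S$ and $\tilde S'$ is close to $\tilde S$, then $\sign(\tilde S)\subseteq\sign(\tilde S')^\downarrow$, hence $\sign(\tilde S)^\downarrow\subseteq\sign(\tilde S')^\downarrow$. The argument projects a point $x\in\tilde S$ realizing a given sign vector onto the nearby $\tilde S'$: coordinates of $x$ bounded away from zero keep their sign, whereas vanishing coordinates may acquire any sign, so the projected point has sign vector $\ge\sign(x)$. Granting $\sign(S)\subseteq\sign(\tilde S)^\downarrow$, monotonicity gives $\sign(S)\subseteq\sign(\tilde S')^\downarrow$ for all nearby $\tilde S'$. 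I would then show this inclusion already forces the uniqueness condition $\sign(S)\cap\sign((\tilde S')^\perp)=\{0\}$ of Theorem~\ref{thm:unique}: if a nonzero $\sigma$ lay in both $\sign(S)$ and $\sign((\tilde S')^\perp)$, choose $\tau\in\sign(\tilde S')$ with $\sigma\le\tau$; then $\sigma$ and $\tau$ are orthogonal sign vectors (Appendix~\ref{app:sign}) with $\sigma_i\tau_i\ge0$ for all $i$ and some $\sigma_i\tau_i=+$, which is impossible. What then remains for sufficiency is that the inclusion also supplies the two extra conditions (ii), (iii) of \cite[Theorem~14]{MuellerHofbauerRegensburger2019} upgrading uniqueness to existence of a CBE \emph{in every} stoichiometric class; I would verify these by unwinding their explicit sign-vector form and reusing monotonicity.

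For necessity I would argue by contraposition: assuming $\sign(S)\not\subseteq\sign(\tilde S)^\downarrow$, I produce perturbations $\tilde S'$ arbitrarily close to $\tilde S$ that violate uniqueness, so existence--uniqueness cannot hold for all small perturbations. Fix $\sigma\in\sign(S)$ with $\sigma\notin\sign(\tilde S)^\downarrow$. The key input is a transposition (Stiemke--Gordan type) lemma for subspaces, taken from Appendix~\ref{app:sign}: since no sign vector of $\tilde S$ lies above $\sigma$, there is a nonzero $\rho\in\sign(\tilde S^\perp)$ with $\rho\le\sigma$. Realize $\rho=\sign(z_0)$ with $0\neq z_0\in\tilde S^\perp$, and set $z_\varepsilon=z_0+\varepsilon w$, where $w$ fills in the coordinates on which $\rho$ vanishes so that $\sign(z_\varepsilon)=\sigma$ for small $\varepsilon>0$. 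As $\varepsilon\to0$ the direction of $z_\varepsilon$ tends to that of $z_0\in\tilde S^\perp$, so there is a subspace $\tilde S'$ near $\tilde S$ with $z_\varepsilon\in(\tilde S')^\perp$; then $\sigma\in\sign(S)\cap\sign((\tilde S')^\perp)$ is a nonzero common sign vector, breaking uniqueness for a $\tilde S'$ arbitrarily close to $\tilde S$.

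I expect the obstacle to be twofold. The perturbation geometry is the clean part, handled by the monotonicity lemma and the explicit $z_\varepsilon$ construction; the genuinely technical step is the subspace duality lemma asserting that failure of $\sigma\le\tau$ for all $\tau\in\sign(\tilde S)$ is \emph{certified} by a nonzero sign vector $\rho\in\sign(\tilde S^\perp)$ conformally below $\sigma$ --- this is the oriented-matroid content where a transposition theorem is unavoidable. The secondary obstacle is the bookkeeping in the sufficiency direction, namely checking that the single inclusion $\sign(S)\subseteq\sign(\tilde S)^\downarrow$ really implies conditions (ii), (iii) of \cite[Theorem~14]{MuellerHofbauerRegensburger2019} robustly, and not merely the injectivity condition; here one must use the explicit form of those conditions rather than treat them as a black box.
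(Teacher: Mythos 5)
First, a point of orientation: the paper does not prove this statement at all --- Theorem~\ref{thm:robust} is imported verbatim from \cite{MuellerHofbauerRegensburger2019}, so there is no in-paper proof to compare against and your proposal must stand on its own. Much of it does. The monotonicity lemma is correct (since $\sign(\tilde S)$ is finite, one representative per sign vector gives a uniform closeness threshold, and orthogonal projection onto a nearby $\tilde S'$ preserves nonzero signs while possibly filling in zeros). The step from $\sign(S)\subseteq\sign(\tilde S')^\downarrow$ to $\sign(S)\cap\sign((\tilde S')^\perp)=\{0\}$ is exactly Lemma~\ref{cc_i} of the appendix. Your necessity direction is also essentially sound: the certificate you need is precisely Corollary~\ref{minty_other} (no $\tau\in\sign(\tilde S)$ above $\sigma$ forces a nonzero $\rho\in\sign(\tilde S^\perp)$ below $\sigma$), and the construction $z_\varepsilon=z_0+\varepsilon w$ followed by a small rotation of $\tilde S$ produces nearby subspaces violating the uniqueness condition of Theorem~\ref{thm:unique}. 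Two details must still be supplied but are routine: the perturbed subspace has to be realized as $\im(\tilde Y' I_E)$ for kinetic orders $\tilde Y'$ near $\tilde Y$ (this uses $\tilde\delta=0$, so that the differences $\tilde y(i')-\tilde y(i)$ along a spanning forest form a basis of $\tilde S$ and any nearby basis is realizable), and the standing assumptions in the necessity direction must be extracted by applying Theorem~\ref{thm:existsunique} to the unperturbed system.

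The genuine gap is in the sufficiency direction. What your argument actually establishes is robust \emph{uniqueness}: at most one CBE per stoichiometric class, for all rate constants and all nearby kinetic orders. The theorem asserts robust existence \emph{and} uniqueness in every stoichiometric class, and the existence half is carried precisely by the conditions (ii) and (iii) of \cite[Theorem~14]{MuellerHofbauerRegensburger2019}, which both this paper and your proposal treat as a black box. Your closing plan --- ``verify these by unwinding their explicit sign-vector form and reusing monotonicity'' --- is not a proof step; it cannot even be begun with the resources available here, since those conditions are never stated, and nothing in this paper (Theorem~\ref{thm:exists} gives existence for all rate constants only in \emph{some} class, not every class) can substitute for them. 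This is not a peripheral omission: the entire content of the cited Theorem~46 is that demanding all the conditions of Theorem~45 robustly collapses to the single inclusion $\sign(S)\subseteq\sign(\tilde S)^\downarrow$, and proving that equivalence requires analyzing how the surjectivity conditions, not just the injectivity condition, behave under perturbation. As it stands, your proposal proves strictly less than the theorem: the stated conditions imply robust uniqueness, and their failure destroys robust uniqueness, but the implication from the stated conditions to robust existence in every stoichiometric class is missing.
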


Finally, we obtained first stability results,
for example, on the relation between linear stability and uniqueness
and on the linear stability of \blue{cyclic systems (where the underlying graph is a cycle).}

\begin{thm}[\cite{BorosMuellerRegensburger2020}, Theorem~10] \label{thm:stab_unique}
Let $(G_k,y,\tilde y)$ be a weakly reversible generalized mass-action system. 
If every CBE is linearly stable (in its stoichiometric class), for all rate constants,
then there exists at most one CBE in every stoichiometric class, for all rate constants, 
that is, $\sign(S) \cap \sign(\tilde S^\perp) \neq \{0\}$.
\end{thm}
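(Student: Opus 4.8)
The plan is to prove the contrapositive and to use Theorem~\ref{thm:unique} to translate the conclusion into its sign-vector form. By that theorem, ``at most one CBE in every stoichiometric class, for all rate constants'' is equivalent to the sign-vector condition on $S$ and $\tilde S^\perp$. Hence it suffices to show: if this condition fails, then there exist rate constants $k$ and a positive CBE $x$ whose Jacobian $J(x)$ is singular on $S$. A zero eigenvalue of $J(x)|_S$ precludes asymptotic stability on $S$, contradicting the hypothesis that every CBE is linearly stable for all rate constants. Here I would record once and for all that $\im A_k \subseteq \im I_E$, so $\im J(x) \subseteq \im(Y I_E) = S$; thus a nonzero kernel vector lying in $S$ genuinely witnesses a zero eigenvalue of the restriction $J(x)|_S$.

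So I would assume a nonzero common sign vector $\sigma$ and fix witnesses $v \in S$ and $w \in \tilde S^\perp$ with $\sign(v) = \sign(w) = \sigma$. Since $v$ and $w$ carry the same sign pattern, their components agree in sign on the support of $\sigma$ and both vanish off it. The key idea is to exploit the remaining scaling freedom to align $v$ and $w$ by a positive diagonal matrix: define $x > 0$ by $x_i = v_i/w_i$ on the support of $\sigma$ (a positive ratio) and by arbitrary positive values elsewhere, so that $\diag(x)\,w = v$, equivalently $\diag(x^{-1})\,v = w$.

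Next I would feed $v$ into $J(x) = Y A_k \diag(x^{\tilde Y}) \tilde Y^\trans \diag(x^{-1})$. By construction the two rightmost factors send $v$ to $q := \tilde Y^\trans w$. Since $\tilde S = \im(\tilde Y I_E)$, membership $w \in \tilde S^\perp$ means exactly $I_E^\trans \tilde Y^\trans w = 0$, i.e.\ $q$ is constant on each connected component of $G$. The crucial computation is then $J(x)\,v = Y A_k (x^{\tilde Y} \had q)$: writing $q = c_\lambda$ on the component $V_\lambda$ and using that $A_k$ is block-diagonal with respect to the components, one obtains $A_k(x^{\tilde Y} \had q) = \sum_\lambda c_\lambda\, A_k(x^{\tilde Y}|_{V_\lambda})$, which vanishes as soon as $x$ is complex-balanced, since then $A_k\, x^{\tilde Y} = 0$ blockwise. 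Hence $J(x)\,v = 0$ with $0 \neq v \in S$.

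It remains to realize the constructed point $x$ as a CBE for suitable positive rate constants, and this is the step I expect to be the main (if mild) obstacle, and the one where weak reversibility is essential. I would need $k > 0$ with $A_k\, x^{\tilde Y} = 0$, i.e.\ the prescribed positive vector $\xi := x^{\tilde Y}$ must lie in $\ker A_k$. Setting $k_{i \to j} = c_{ij}/\xi_i$ reduces this to producing a strictly positive circulation $c$ on $G$ (balanced in-flow and out-flow at each vertex), which exists precisely because every component is strongly connected. With this choice $x$ is a CBE, $J(x)$ is singular on $S$, and linear stability for all rate constants is contradicted. The only remaining points to verify carefully are the two structural facts used above, namely the equivalence $w \in \tilde S^\perp \Leftrightarrow I_E^\trans \tilde Y^\trans w = 0$ and the blockwise vanishing $A_k(x^{\tilde Y}|_{V_\lambda}) = 0$ at a CBE; both are immediate from the definitions of $\tilde S$ and of the Laplacian $A_k$.
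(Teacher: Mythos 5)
Your proposal is correct, and its skeleton coincides with the paper's own proof: argue the contrapositive, pick nonzero $v \in S$ and $\tilde v \in \tilde S^\perp$ with a common sign vector, build $x>0$ with $\diag(x^{-1})\,v = \tilde v$, use weak reversibility to realize $x$ as a CBE for some rate constants, and exhibit $v$ as a kernel vector of $J(x)$ on $S$. Where you genuinely diverge is in the two supporting steps. For $J(x)\,v=0$, the paper factors the Jacobian through its new decomposition of the graph Laplacian, $A_k \diag(K_k) = -I_\EE \Ac I_\EE^\trans$, so that the rightmost factor $(\tilde Y I_\EE)^\trans$ visibly annihilates $\tilde S^\perp$; you instead argue elementarily that $q = \tilde Y^\trans \tilde v$ is constant on connected components (because $I_E^\trans q = 0$) and that the block-diagonal structure of $A_k$ then reduces everything to $A_k\,x^{\tilde Y}=0$, i.e., to complex balancing itself. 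For realizing $x$ as a CBE, the paper cites \cite{MuellerRegensburger2014}, whereas you reprove the fact by constructing a strictly positive circulation from strong connectivity and setting $k_{i \to j} = c_{ij}/\xi_i$. Your route is more self-contained: it needs neither the existence of the core Laplacian $\Ac$ (a result imported from another paper) nor the external lemma. The paper's route, on the other hand, is deliberately chosen to showcase the decomposition that powers its later results (the same factorization of $\JJ(x)$ reappears in Proposition~\ref{pro:Pmatrix} and Proposition~\ref{pro:S}). Two further points in your favor: you record $\im J(x) \subseteq S$ explicitly, so the kernel vector genuinely witnesses a zero eigenvalue of the restriction $J(x)|_S$ (the paper leaves this implicit); and your handling of the orthogonal complement is the correct one --- the paper's displayed condition $\sign(S) \cap \sign(\tilde S^\perp) \neq \{0\}$ for uniqueness, and its lines $\ker(\tilde Y I_\EE)^\trans = \im \tilde Y I_\EE$ and $\tilde v \in \tilde S$, contain sign/complement typos, while your version ($=\{0\}$ for uniqueness, $\tilde v \in \tilde S^\perp = \ker(\tilde Y I_E)^\trans$) is the intended statement.
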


\begin{thm}[\cite{BorosMuellerRegensburger2020}, Theorem~11] \label{thm:stab_cycle}
Let $(G_k,y,\tilde y)$ be a cyclic generalized mass-action system.
Every CBE is linearly stable (in its stoichiometric class), for all rate constants,
if and only if
$Y \! A_{k=1} \tilde Y^\trans$ is $D$-stable on the stoichiometric subspace.
\end{thm}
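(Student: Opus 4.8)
The plan is to characterize when every CBE of a cyclic generalized mass-action system is linearly stable (for all rate constants) by reducing the Jacobian $J(x) = Y A_k \diag(x^{\tilde Y})\,\tilde Y^\trans \diag(x^{-1})$ to a normalized form involving the matrix $M := Y A_{k=1} \tilde Y^\trans$, and then invoking the definition of $D$-stability on the stoichiometric subspace. The key observation is that for a \emph{cyclic} network, the graph has a single strongly connected component that is a cycle, so the Laplacian $A_k$ has a particularly rigid structure: its kernel is one-dimensional and spanned by a positive vector, and the CBE condition $A_k\,x^{\tilde Y}=0$ pins down $x^{\tilde Y}$ (up to scaling) in terms of $k$. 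I would exploit this to show that varying $k$ over all positive edge labels, together with varying the CBE $x$, realizes exactly the family of diagonal perturbations that appears in the definition of $D$-stability.

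\emph{First I would} make the diagonal factors explicit. Write $J(x) = Y A_k \diag(x^{\tilde Y})\,\tilde Y^\trans \diag(x^{-1})$ and note that stability on $S$ is unaffected by the similarity-type conjugation coming from $\diag(x^{-1})$ on a complementary factor; more precisely, I would argue that $J(x)$ is stable on $S$ if and only if $Y A_k \diag(x^{\tilde Y})\,\tilde Y^\trans$ is stable on $S$, since $\diag(x^{-1})$ is positive diagonal and $S = \im(Y I_E)$ is invariant in the relevant sense. Then I would absorb the positive diagonal matrix $\diag(x^{\tilde Y})$ together with the rate-constant dependence of $A_k$. For a cycle, $A_k = I_E \diag(k)(I_E^s)^\trans$, and one checks that for any positive diagonal $D$ there exist positive rate constants $k$ and a CBE value so that $A_k \diag(x^{\tilde Y}) = A_{k=1} D$ up to the cyclic structure; this is where the cyclic hypothesis is essential, because the single cycle makes the correspondence between $(k, x^{\tilde Y})$ and an arbitrary positive diagonal scaling transparent.

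\emph{The next step} is to match this with the definition of $D$-stability on the stoichiometric subspace: $M = Y A_{k=1}\tilde Y^\trans$ is $D$-stable on $S$ precisely when $M D$ (equivalently $DM$, by the symmetry in the definition) is stable on $S$ for every positive diagonal $D$. So I would establish a bijection-style equivalence: every linearized system at a CBE, for some choice of $k$, produces a matrix of the form $M D$ with $D$ positive diagonal on $S$; and conversely, every positive diagonal $D$ arises from some admissible $(k,x)$. The forward direction (stability implies $D$-stability) and backward direction (D-stability implies stability for all $k$) would then both follow from this identification, giving the ``if and only if.''

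\emph{The hard part will be} the precise bookkeeping of how the diagonal factors distribute between the left factor $Y$, the Laplacian $A_k$, and the right factor $\tilde Y^\trans$, and showing that the set of diagonal matrices realizable as $\diag(x^{\tilde Y})$ (as $x$ ranges over CBE and $k$ over positive rate constants) is exactly all of $\pd$ restricted to the relevant subspace. In particular, one must confirm that no positivity or dimensional obstruction prevents every positive diagonal from being attained, and that the reduction from $J(x)$ to $MD$ genuinely preserves stability \emph{on the subspace} $S$ rather than on all of $\R^n$. I expect the cyclic assumption to do precisely the work of removing these obstructions, since it forces the kernel and cycle structure of $A_k$ to be one-dimensional and positive, making the parametrization of CBE by rate constants completely explicit.
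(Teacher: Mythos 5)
Your proposal aims at the right final statement (a bijection between linearizations at CBE and the family $MD$, $D\in\pd$, where $M=YA_{k=1}\tilde Y^\trans$), but two of your concrete intermediate claims are false, and the first one would actually contradict the theorem you are trying to prove. First, you assert that $J(x)=\JJ(x)\diag(x^{-1})$ is stable on $S$ if and only if $\JJ(x)=YA_k\diag(x^{\tilde Y})\tilde Y^\trans$ is stable on $S$, i.e.\ that the factor $\diag(x^{-1})$ can be dropped. Right-multiplication by a positive diagonal matrix does \emph{not} preserve stability; if it did, stability and $D$-stability would coincide and the theorem would be trivial (and in fact false as an equivalence). The factor $\diag(x^{-1})$ is precisely where the diagonal matrix $D$ of the $D$-stability condition comes from, so it must be kept, not discarded. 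Second, you claim that for any positive diagonal $D$ there exist rate constants $k$ and a CBE $x$ with $A_k\diag(x^{\tilde Y})=A_{k=1}D$. This fails: at a CBE the entries of $x^{\tilde Y}/K_k$ are constant on the (single) connected component, so $x^{\tilde Y}=c\,K_k$ with a scalar $c>0$, and for a cycle the decomposition $A_k\diag(K_k)=\la_{k,C}A_{k=1}$ holds with a scalar $\la_{k,C}>0$. Hence $A_k\diag(x^{\tilde Y})$ is always a positive \emph{scalar} multiple of $A_{k=1}$; there is no diagonal freedom in that factor at all. You have placed the diagonal freedom in the middle factor, where the complex-balancing condition rigidifies everything to scalars, and removed it from the right factor, where it actually lives.

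The correct bookkeeping is short: at any CBE $x$ of a cyclic system, $J(x)=c\,\la_{k,C}\,M\diag(x^{-1})$ with positive scalars $c,\la_{k,C}$, and positive scalar factors do not affect stability on $S$. Since a cycle is weakly reversible, \emph{every} $x\in\R^n_>$ is a CBE for some choice of rate constants (cf.\ the argument used in the paper's new proof of Theorem~\ref{thm:stab_unique}), so as $(k,x)$ ranges over all admissible pairs, $\diag(x^{-1})$ ranges over all of $\pd$. Therefore ``every CBE is linearly stable for all rate constants'' is equivalent to ``$MD$ is stable on $S$ for all $D\in\pd$,'' which is the definition of $D$-stability of $M$ on $S$. (Note also that the paper itself does not prove this theorem; it quotes it from prior work, though Section~\ref{sec:rewrite} contains exactly the two ingredients above: $x^{\tilde Y}=D_VK_k$ with $D_V$ a positive multiple of the identity per component, and the cycle decomposition of the Laplacian. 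A minor additional caution: your parenthetical ``equivalently $DM$'' needs care on a proper subspace, since the similarity between $MD$ and $DM$ maps $S$ to $DS$, not to $S$.)
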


\subsubsection*{Stability on a linear subspace}

At this point, we recall standard definitions of linear stability and matrix stability
and their extensions to stability on a linear subspace. 

We say that an equilibrium $x$ is linearly stable in its stoichiometric class $x+S$
if the Jacobian matrix $J(x)$ is stable on $S$. 
Analogously, we say that $x$ is diagonally stable/$D$-stable (in $x+S$) if $J(x)$ is
diagonally stable/$D$-stable on $S$.

A square matrix is \blue{(Hurwitz)} stable if all its eigenvalues have negative real part.
Given a linear subspace $L$,
a square matrix $A$ with $\im A \subseteq L$ is stable on $L$
if all eigenvalues of the linear map $A|_L \colon L \to L$ have negative real part.
(In particular, this implies $\im A = L$.)
A square matrix $A$ is $D$-stable on $L$, if $AD$ is stable on $L$ for all positive diagonal matrices $D\in \pd$.
To define diagonal stability, we require the extension of Lyapunov's Theorem to stability on a linear subspace, 
cf.~\cite[Proposition~5]{BorosMuellerRegensburger2020}:
A square matrix $A$ is stable on $L$ if and only if there exists $P=P^\trans>0$ on $L$
such that $PA+A^\trans P < 0$ on $L$. 
Now, a square matrix $A$ is diagonally stable on $L$ if $P=D\in\pd$,
that is, the positive definite matrix $P$ in Lyapunov's Theorem is actually a positive diagonal matrix $D$.


\subsection{A new decomposition of the graph Laplacian} \label{sec:new}

\blue{
For simplicity, 
let $G_k=(V,E,k)$ be a strongly connected, labeled, simple digraph
and $A_k$ be the corresponding Laplacian matrix.
It is well known that
\begin{equation*}
\ker A_k = \im K_k
\end{equation*}
with a positive vector $K_k \in \R^V_>$ (depending on the rate constants).
The entries of $K_k$ (the {\em tree constants}) can be given explicitly in terms of $k$,
\begin{equation} \label{eq:Kk}
(K_k)_i = \sum_{(V,E') \in T_i} \; \prod_{(j \to j') \in E'} k_{j \to j'} , \quad i \in V ,
\end{equation}
where $T_i$ is the set of directed spanning trees of $G_k$ rooted at vertex $i \in V$ 
(and directed towards the root).
The tree constants $K_k$ correspond to minors of the matrix $-A_k$
which is the content of the matrix-tree theorem (for labeled, directed graphs)~\cite[Theorem~3.6]{Tutte1948}.
For a minimal proof of Eqn.~\eqref{eq:Kk}, see \cite[Lemma~1]{Kandori1993} or \cite[Appendix~A]{Mueller2022}.

In general,}
let $G_k=(V,E,k)$ be a labeled, simple digraph with $\ell$ strongly connected components $G_k^\la=(V^\la,E^\la,k^\la)$, $\la = 1, \ldots, \ell$.
\blue{The corresponding Laplacian matrix $A_k \in \R^{V \times V}$ is block-diagonal with blocks $A_k^\la \in \R^{V^\la \times V^\la}$,
and the vector of tree constants $K_k \in \R^V_>$ has blocks $K_k^\la \in \R^{V^\la}_>$.}

An {\em auxiliary} graph $G_\EE = (V,\EE)$ is a simple directed graph
with $\ell$ components $G_\EE^\la=(V^\la,\EE^\la)$ and $|\EE^\la|=|V^\la|-1$, $\la = 1, \ldots, \ell$.
That is, an auxiliary graph has the same set of vertices as the original graph,
but an arbitrary set of edges \blue{that} form a directed forest.
(In general, an auxiliary graph is not a subgraph of the original graph,
nor are the trees directed towards a root.)
The corresponding incidence matrix is denoted by $I_\EE \in \R^{V \times \EE}$.
\blue{A chain graph is an auxiliary graph with components $G_\EE^\la=(V^\la,\EE^\la)$ of the form $i_1 \to i_2 \to \ldots \to i_{|V^\la|}$.}

Let $A_k \in \R^{V \times V}$ be the Laplacian matrix of the graph $G_k$,
and let $G_\EE$ be an auxiliary graph.
By~\cite[Theorem~4]{Mueller2022},
there exists an invertible, block-diagonal matrix $\Ac \in \R^{\EE \times \EE}$, 
called the {\em core matrix} of the graph Laplacian, such that
\begin{equation} \label{new}
A_k \diag(K_k) = - I_\EE \Ac I_\EE^\trans .
\end{equation}
By~\cite[Theorem~2]{Mueller2022},
if $G_\EE$ is a chain graph,
then $\Ac$ is non-negative with positive diagonal.

A positive CBE $x \in \R^n_>$ is given by $A_k (x^*)^{\tilde Y} = 0$.
Using \eqref{new} and
$\ker(I_\EE) = \ker(\Ac) = \{0\}$, this is equivalent to
\[
I_\EE^\trans \diag(K_k^{-1}) \, x^{\tilde Y} = 0 ,
\]
that is, to the binomial equations
\begin{equation} \label{cbe}
\frac{x^{\tilde y(i')}}{(K_k)_{i'}} - \frac{x^{\tilde y(i)}}{(K_k)_i} = 0 \quad \text{for } (i \to i') \in \EE ,
\end{equation}
cf.~\cite{MuellerRegensburger2014,Mueller2022}.
Given a particular positive CBE $x^*$,
Eqn.~\eqref{cbe} is equivalent to 
\[
\left(\frac{x}{x^*}\right)^{\tilde y(i')} = \left(\frac{x}{x^*}\right)^{\tilde y(i)} \quad \text{for } (i \to i') \in \EE
\]
and further to $(y(i')-y(i))^\trans \ln (x/x^*)=0$ for $(i \to i') \in \EE =0$, that is, to ${(\tilde Y I_\EE)^\trans \ln (x/x^*) = 0}$.
Since $\ker (\tilde Y I_\EE)^\trans = (\im \tilde Y I_\EE)^\perp = (\im \tilde Y I_E)^\perp = \tilde S^\perp$,
the set of all positive CBEs is given by the monomial parametrization $x= x^* \had \e^{\tilde S^\perp}$.

\subsection{Rewriting the Jacobian matrix} \label{sec:rewrite}

Let $x \in \R^n_>$ be a positive CBE.
In the analysis of its stability,
we have to deal with the dependence of the Jacobian matrix $J(x)$ on $x$.
First, we introduce the scaled Jacobian matrix
\begin{equation} \label{Jred}
\JJ(x) = Y A_k \diag(x^{\tilde Y}) \, \tilde Y^\trans ,
\end{equation}
that is, $J(x) = \JJ(x) \diag(x^{-1})$, cf.~Eqn.~\eqref{J},
and the diagonal matrix is taken into account 
by considering the $D$-stability of $\JJ(x)$.
Second, we observe that the entries of $\frac{x^{\tilde Y}}{K_k} \in \R^{V}_>$ are identical on connected components,
cf.~Eqn.~\eqref{cbe}.
Hence, $x^{\tilde Y} = D_V K_k$ with a diagonal matrix $D_V \in \R^{V \times V}$.
In particular, the blocks of $D_V$ (corresponding to the $\ell$ connected components) are positive multiples of the identity matrices $\id^{V_i} \in \R^{V_i \times V_i}$, $i = 1, \ldots, \ell$.
That is,
\[
D_V = 
\begin{pmatrix}
c_1 \, \id^{V_1}  & 0 & \ldots & 0 \\
0 & c_2 \, \id^{V_2} & & \vdots \\
\vdots & & \ddots & 0 \\
0 & \ldots & 0 & c_\ell \, \id^{V_\ell}
\end{pmatrix} 
\]
with $c_i = (c_k(x))_i>0$, $i = 1, \ldots, \ell$, depending on $k$ and $x$.
Hence, $A_k \diag(x^{\tilde Y}) = A_k \diag(K_k) D_V$. 

{\em Decomposition I.}
Given an auxiliary graph $(V,\EE)$
and using the new decomposition of the graph Laplacian, Eqn.~\eqref{new},
we have
\begin{align*}
\JJ(x) &= Y A_k \diag(K_k) D_V \tilde Y^\trans \\
&= - Y I_\EE \Ac I_\EE^\trans D_V \tilde Y^\trans .
\end{align*}
%
Further, since the incidence matrix $I_\EE \in \R^{V \times \EE}$ is block-diagonal,
it holds that
$
I_\EE^\trans D_V = D_\EE I_\EE^\trans
$
with a diagonal matrix $D_\EE \in \R^{\EE \times \EE}$.
In particular, the blocks of $D_\EE$ are positive multiples of the identity matrices $\id^{\EE_i} \in \R^{\EE_i \times \EE_i}$, $i = 1, \ldots, \ell$.
That is,
\[
D_\EE = 
\begin{pmatrix}
c_1 \, \id^{\EE_1}  & 0 & \ldots & 0 \\
0 & c_2 \, \id^{\EE_2} & & \vdots \\
\vdots & & \ddots & 0 \\
0 & \ldots & 0 & c_\ell \, \id^{\EE_\ell}
\end{pmatrix} .
\]
As a result, we obtain the more symmetric form
\begin{equation} \label{Jrewrite}
\JJ(x) = - Y I_\EE \Ac D_{k,\EE}(x) I_\EE^\trans \tilde Y^\trans 
\end{equation}
with the diagonal matrix $D_\EE = D_{k,\EE}(x)$, depending on $k$ and $x$.

{\em Decomposition II.}
Alternatively, we use cycle decomposition of the graph Laplacian.
\blue{In the context of mass-action systems,
this technique has been applied to obtain the foundational result on CBEs~\cite[Theorem~6A]{HornJackson1972},
and it has also been used to study linear stability~\cite{SiegelJohnston2008}.}

Given $G=(V,E)$,
\[
A_k \diag(K_k) = \sum_C \la_{k,C} \, A_C , 
\]
where the sum is over all cycles $C$ in $G$,
$\la_{k,C}>0$ depends on $k$,
and $A_C$ is the Laplacian matrix of $C$ with $k=1 \in \R^E_>$ (all edge labels set to 1),
cf.~\cite[Appendix A, Fact~11]{Mueller2022}.
Hence,
\begin{align*}
\JJ(x) &= Y A_k \diag(K_k) D_V \tilde Y^\trans \\
&= Y \left( \sum_C \la_{k,C} A_C D_V \right) Y^\trans \\
&= Y \left( \sum_C \la_{k,C} \, c_{i(C)} A_C \right) Y^\trans 
\end{align*}
with $c_{i(C)} = (c_k(x))_{i(C)}>0$ corresponding to the connected component~$i$ of the cycle~$C$.
Finally, considering $\la_C = \la_{k,C} \, c_{i(C)}$ as independent ``cycle parameters'',
\begin{equation} \label{eq:cycle}
\JJ = Y \left( \sum_C \la_C A_C \right) \tilde Y^\trans .
\end{equation}
For every CBE, the scaled Jacobian matrix is of this form.
By guaranteeing its $D$-stability 
(for all cycle parameters), 
one shows the linear stability of every CBE 
(for all rate constants).


\subsection{Monomial orders and hyperplane arrangements} \label{sec:orders}


Let $(G_k,y,\tilde y)$ be a generalized mass-action system 
based on a labeled, simple digraph $G_k=(V,E,k)$ and the maps $y,\tilde y$ (the matrices $Y,\tilde Y$).
In particular, let $(G_k,y,\tilde y)$ be weakly reversible (let the components of $G_k$ be strongly connected)
and $x^* \in \R^n_>$ be a positive CBE.

For \blue{fixed} $x \in \R^n_>$,
the values of the monomial \blue{terms} $\frac{x^{\tilde y(i)}}{(K_k)_i} \in \R_>$ with $i \in V$ 
are ordered. 
For simplicity, we first consider a strongly connected graph $G_k=(V,E,k)$ with $m=|V|$.
Obviously, \blue{there is a} {\em total order} 
\[
\frac{x^{\tilde y(i_1)}}{(K_k)_{i_1}} \le \frac{x^{\tilde y(i_2)}}{(K_k)_{i_2}} \le \ldots \le \frac{x^{\tilde y(i_m)}}{(K_k)_{i_m}} .
\]
By Eqn.~\eqref{cbe} 
for the positive CBE $x^* \in \R^n_>$,
the order is equivalent to
\[
\left(\frac{x}{x^*}\right)^{\tilde y(i_1)} \le \left(\frac{x}{x^*}\right)^{\tilde y(i_2)} \le \ldots \le \left(\frac{x}{x^*}\right)^{\tilde y(i_m)} .
\]
The total order can be represented by a chain graph,
\[
i_1 \to i_2 \to \ldots \to i_m.
\]
If the order is non-strict \blue{(if some monomial terms have the same value),}
then the representation by a chain graph (defining a strict order on the vertices) is not unique.
In fact, every total order on the monomial \blue{terms}
can be represented in an {\em equivalent} way by a {\em total preorder} on the vertices.
For example,
if \[\left(\frac{x}{x^*}\right)^{\tilde y(i_1)} < \left(\frac{x}{x^*}\right)^{\tilde y(i_2)} = \left(\frac{x}{x^*}\right)^{\tilde y(i_3)} < \ldots ,\]
then the order can be represented by the graph \[i_1 \to i_2 \rightleftarrows i_3 \to \ldots ,\] defining a preorder.
(Obviously, this is not an auxiliary graph $(V,\EE)$ in the sense of the previous subsection, since $|\EE|>|V|-1$.)

\subsubsection*{Monomial evaluation orders}

For a graph with several components,
we call an order on the entries of $\frac{x^{\tilde Y}}{K_k} \in \R^V_>$ 
and ${(\frac{x}{x^*})^{\tilde Y} \in \R^V_>}$ \blue{that is total within connected components, but does not relate entries in different components,}
a {\em monomial evaluation order}
(since the notion {\em monomial order(-ing)} has a different meaning in algebra).
We represent the order by a chain graph $G_{\EE} = (V,\EE)$
and often just by the set of edges $\EE$.
Explicitly,
$(i \to i') \in \EE$ 
implies $\frac{x^{\tilde y(i)}}{(K_k)_i} \le \frac{x^{\tilde y(i')}}{(K_k)_{i'}}$ and $(\frac{x}{x^*})^{\tilde y(i)} \le (\frac{x}{x^*})^{\tilde y(i')}$
\blue{and that} the vertices $i,i' \in V$ \blue{are} in the same strongly connected component.
If the order is non-strict, then $\EE$ is not unique.

Conversely,
let $G_\EE = (V,\EE)$ be a chain graph.
The subset of $\R^n_>$ with monomial evaluation order represented by $\EE$ is given by
\begin{equation} \label{st}
\begin{aligned}
\ST &= \left\{ x \in \R^n_> \mid \frac{x^{\tilde y(i')}}{(K_k)_{i'}} - \frac{x^{\tilde y(i)}}{(K_k)_i} \ge 0 \text{ for } (i \to i') \in \EE \right\} \\
&= \left\{ x \in \R^n_> \mid I_\EE^T \diag(K_k^{-1}) \, x^{\tilde Y} \ge 0 \right\} .
\end{aligned}
\end{equation}

By Eqn.~\eqref{cbe} 
for the positive CBE $x^* \in \R^n_>$, 
\begin{align*}
\ST 
&= \left\{ x \in \R^n_> \mid \left(\frac{x}{x^*}\right)^{\tilde y(i')} - \left(\frac{x}{x^*}\right)^{\tilde y(i)} \ge 0 \text{ for } (i \to i') \in \EE \right\} \\
&= \left\{ x \in \R^n_> \mid I_\EE^T \left(\frac{x}{x^*}\right)^{\tilde Y} \ge 0 \right\} .
\end{align*}
Further, by the monotonicity of the logarithm,
\begin{align*}
\ST &= \left\{ x \in \R^n_> \mid (\tilde y(i')-\tilde y(i))^T \ln \frac{x}{x^*} \ge 0 \text{ for } (i \to i') \in \EE \right\} \\
&= \left\{ x \in \R^n_> \mid (\tilde Y I_\EE)^T \ln \frac{x}{x^*} \ge 0 \right\} .
\end{align*}

Hence,
\[
x \in \ST
\quad \Leftrightarrow \quad
\ln \frac{x}{x^*} \in \CE 
\]
with the polyhedral cone
\begin{align} \label{ce}
\CE 
&= \left\{ z \in \R^n \mid (\tilde Y I_\EE)^\trans z \ge 0 \right\} ,
\end{align}
which does not depend on $k$.
(Of course, $x^*$ depends on $k$.)
The lineality space of $\CE$ does not even depend on~$\EE$,
\[
\lineal \CE = \ker \, (\tilde Y I_\EE)^\trans = (\im \tilde Y I_\EE)^\perp = (\im \tilde Y I_E)^\perp = \tilde S^\perp .
\]

Given $\EE$, there are two possibilities:
\begin{itemize}
\item
$\CE = \tilde S^\perp$.
Then all defining inequalities of $\CE$ are fulfilled with equality,
and $\ST = x^* \had \e^{\tilde S^\perp}$ equals the set of CBE.
\item
$\CE$ is full-dimensional.
Then, generically, the monomial evaluation order is strict in the interior of $\ST$ (except, of course, if there are identical 
monomials within components) and non-strict on the boundary.
\end{itemize}

A full-dimensional subset $\ST$ is called a {\em stratum},
cf.~\cite{SiegelJohnston2011} in the setting of classical mass-action systems.
This term has also been used for partial orders related to the original graph, rather than to an auxiliary graph,
cf.~\cite{CraciunDickensteinShiuSturmfels2009}.
\blue{Monomial orders are also used to define ``tiers'' in \cite{Anderson2011}.}

As stated above, given $x \in \R^n_>$,
there is a (non-unique) $\EE$ such that $x \in \ST$.
In particular, $\R^n_>$ is a union of strata which intersect only on their boundaries.

\subsubsection*{Central hyperplane arrangements}

Alternatively, total orders on the monomials
can be represented (in an equivalent way) by sign vectors of a linear subspace.

By the monotonicity of the logarithm, the order on the entries of ${(\frac{x}{x^*})^{\tilde Y} \in \R^V_>}$ (within components)
is equivalent to the order on the entries of $\tilde Y^\trans z \in \R^V$ with $z = \ln \frac{x}{x^*}$.
Hence, we consider the set of pairs of vertices (within components),
\begin{equation}
\Omega = \left\{ i \to i' \mid i,i' \in V^\la, \, \la=1,\ldots,\ell \right\} ,
\end{equation}
and the resulting arrangement of central hyperplanes
\[
\tilde h_{i \to i'} = \{ z \in \R^n \mid (\tilde y(i')-\tilde y(i))^T z = 0 \}, \quad (i \to i') \in \Omega .
\]
The central hyperplane arrangement decomposes $\R^n$ into open polyhedral cones called {\em faces};
full dimensional faces are called {\em cells}.
In particular, a cell is the interior of a stratum.

Clearly, $(V,\Omega)$ is the complete graph on $V$ (within components)
with incidence matrix $I_\Omega \in \R^{V \times \Omega}$.
Hence,
\[
\sign \left( (\tilde Y I_\Omega )^\trans z \right) \in \{-,0,+\}^\Omega
\]
determines, for every pair of vertices $(i \to i') \in \Omega$, whether
\[
(\tilde y(i')-\tilde y(i))^T z \lesseqqgtr 0 .
\]
That is, 
$\sign \big( (\tilde Y I_\Omega )^\trans z \big)$ represents the total order on the entries of $\tilde Y^\trans z \in \R^V$
and equivalently on the entries of $(\frac{x}{x^*})^{\tilde Y} \in \R^V_>$ (the monomial evaluation order).
To cover all orders, we introduce the linear subspace 
\begin{equation}
\tilde T = \im (\tilde Y I_\Omega)^\trans \subseteq \R^\Omega .
\end{equation}
Every monomial evaluation order is represented in an {\em equivalent} way by an element of $\sign(\tilde T) \subseteq \{-,0,+\}^\Omega$.
%

In analogy to $\tilde T$, given by the kinetic-order complexes $\tilde Y \in \R^{n \times V}$,
we introduce 
\begin{equation}
T = \im (Y I_\Omega)^\trans \subseteq \R^\Omega , 
\end{equation}
given by the (stoichiometric) complexes $Y \in \R^{n \times V}$.
For an illustration of hyperplane arrangements and the corresponding orders, see Example~\ref{exa:Lotka} below.


\section{Main results} \label{sec:main}

We study the linear stability of CBE.
In the first two subsections,
we use the new decomposition of the graph Laplacian and monomial evaluation orders.
In the third subsection,
we also use cycle decomposition.
As a preparation,
we study the non-existence of other steady states.

\subsection{Non-existence of other steady states} \label{sec:noother}

\blue{
From the notation section,
recall that the total closure of a set of sign vectors $\mathcal{T} \subseteq \{-,0,+\}^n$
is denoted by $\mathcal{T}^\updownarrow$.}

\begin{thm} \label{thm:noother}
Let $(G_k,y,\tilde y)$ be a generalized mass-action system
with a positive CBE (in some stoichiometric class).
If \[ \sign(\tilde T) \subseteq \sign(T)^\updownarrow, \]
then every positive steady state of the dynamical system~\eqref{ode2} is a CBE.
More generally, if \[ \sign \left( (\tilde Y I_\Omega)^\trans (\Sigma(S)) \right) \subseteq \sign(T)^\updownarrow,\]
then every positive steady state in a stoichiometric class with a positive CBE is itself a CBE.
\end{thm}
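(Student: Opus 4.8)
The plan is to parametrize positive points relative to the given CBE $\xa$ by $x = \xa \had \e^{z}$ with $z = \ln(x/\xa) \in \R^n$, and to recall from Section~\ref{sec:new} that $x$ is itself a CBE exactly when $z \in \tilde S^\perp$, i.e. when the monomial evaluation order $\sigma := \sign\big((\tilde Y I_\Omega)^\trans z\big) \in \sign(\tilde T)$ is the zero sign vector. I would argue by contradiction: assume $x$ is a positive steady state with $\sigma \neq 0$ (so $x$ is \emph{not} a CBE) and produce a violation of the sign condition.

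First I would turn the steady-state defect into a nonnegative forest flow. Choosing an auxiliary chain graph $\EE$ that represents the monomial evaluation order of $x$ and applying the decomposition~\eqref{new}, I obtain $A_k \, x^{\tilde Y} = -I_\EE\, p$ with $p = \Ac\, I_\EE^\trans u$, where $u = \diag(K_k^{-1})\, x^{\tilde Y}$. Since $\EE$ is a chain graph, $\Ac$ is nonnegative with positive diagonal, and since $\EE$ is oriented along the order, $I_\EE^\trans u \ge 0$; hence $p \ge 0$, with $p = 0$ iff $x$ is a CBE, and in fact $p_e > 0$ at every strict edge $e$ of $\EE$ (there the positive diagonal of $\Ac$ forces positivity). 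The steady-state equation $Y A_k x^{\tilde Y} = 0$ then reads $Y I_\EE\, p = 0$. Extending $p$ by zero to $g \in \R^\Omega_\ge$ supported on $\EE$, this becomes $Y I_\Omega\, g = 0$, that is, $g \in \ker(Y I_\Omega) = T^\perp$.

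Next I would pass to sign vectors. Set $\rho := \sign(g) \in \sign(T^\perp)$; by construction $\rho \ge 0$, $\rho \neq 0$, and $\rho$ is supported on $\EE$, where $\sigma \ge 0$, while $\rho_e = \sigma_e = +$ at every strict edge. Thus $\sigma$ and $\rho$ are harmonious and overlapping, $\sigma \hads \rho > 0$. Now the hypothesis $\sigma \in \sign(\tilde T) \subseteq \sign(T)^\updownarrow$ (for the general statement one first notes that $x - \xa \in S$ forces $\sign(z) \in \sign(S)$, i.e. $z \in \Sigma(S)$, so that $\sigma \in \sign\big((\tilde Y I_\Omega)^\trans(\Sigma(S))\big) \subseteq \sign(T)^\updownarrow$) yields a nonzero $\tau \in \sign(T)$ with $\tau \le \sigma$ or $\tau \ge \sigma$. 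Exploiting the antisymmetry and transitivity that the complete-graph incidence $I_\Omega$ builds into $T$ and $\tilde T$, I would promote $\sigma \hads \rho > 0$ to $\tau \hads \rho > 0$, contradicting the orthogonality of $\sign(T)$ and $\sign(T^\perp)$ (Appendix~\ref{app:sign}).

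The step I expect to be the main obstacle is exactly this promotion from $\sigma$ to $\tau$. If $\tau \le \sigma$, the support of $\tau$ may a priori avoid the strict edges carrying the flow $g$, so overlap can fail; if $\tau \ge \sigma$, a tie of the monomial order (an edge with $\sigma_e = 0$ that nevertheless carries flow, $\rho_e = +$, through off-diagonal entries of $\Ac$) can make $\tau_e \rho_e = -$, so harmony can fail. Resolving this needs more than the bare sign inequality: I would use that $\sigma$ and $\tau$ are sign vectors of total preorders on each component (so signs and supports propagate transitively), together with the fact that $g$ is a nonnegative element of $T^\perp$ supported on a forest, to force $\tau$ to overlap $\rho$ conformally on a strict edge. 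I expect this to be isolated as an oriented-matroid lemma in the appendix; the remaining bookkeeping — the reduction of the general statement to $z \in \Sigma(S)$ and the treatment of the tie/boundary cases — is then routine.
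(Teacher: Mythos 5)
Your setup is sound and matches the paper's machinery up to the critical point: the decomposition~\eqref{new} with a chain graph representing the monomial evaluation order of $x$, the nonnegativity of the core Laplacian for a chain graph, the identification $g \in \ker(Y I_\Omega) = T^\perp$, and the reduction of the general statement via $\sign(\tilde z) = \sign(x-\xa) \in \sign(S)$, hence $\tilde z \in \Sigma(S)$, are all correct. But the step you flag as the main obstacle is a genuine gap, and your proposed repair points in the wrong direction. As your own case analysis shows, the hypothesis only gives a nonzero $\tau \in \sign(T)$ with $\tau \le \sigma$ or $\tau \ge \sigma$, and neither case yields $\tau \hads \rho > 0$: for $\tau \le \sigma$ the support of $\tau$ may lie entirely off $\EE$ (overlap fails), and for $\tau \ge \sigma$ a tie edge carrying flow may give $\tau_e \rho_e = -$ (harmony fails). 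The difficulty cannot be resolved by ``promoting'' $\tau$ while keeping $\rho$ fixed, because $\rho = \sign(g)$ is an artifact of your initial, arbitrary choice of chain graph: when the order of $x$ has ties, the tie-breaking encoded in $\EE$ is exactly what must be adapted to $\tau$, not the other way around. Fixing $\EE$ (and hence $g$) before $\tau$ is known is what makes the contradiction unreachable.

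This is precisely what the paper does differently. It first realizes $\tau$ by a vector $z$ with $\tau = \sign\big((Y I_\Omega)^\trans z\big)$, and only then chooses the chain graph: since $\tau \hads \sigma \ge 0$, the total preorders on $V$ induced by $Y^\trans z$ and $\tilde Y^\trans \tilde z$ are harmonious, so by Lemma~\ref{lem:order} in Appendix~\ref{app:order} they admit a common strict refinement, i.e., a chain graph $\EE'$ with
\[
a = (Y I_{\EE'})^\trans z \ge 0, \qquad \tilde a = (\tilde Y I_{\EE'})^\trans \tilde z \ge 0, \qquad a \cdot \tilde a > 0 ,
\]
the last inequality using $\tau \le \sigma$ or $\tau \ge \sigma$ (this is Lemma~\ref{lem:exist}). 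Then no sign-vector duality is needed: since $\sign\big(I_{\EE'}^\trans \diag(K_k^{-1}) x^{\tilde Y}\big) = \sign(\tilde a)$ by Eqn.~\eqref{cbe}, and the core Laplacian matrix of the chain graph $\EE'$ is nonnegative with positive diagonal, one gets directly $z^\trans f_k(x) = - a^\trans \, \mathcal{A}'\, \big(I_{\EE'}^\trans \diag(K_k^{-1}) x^{\tilde Y}\big) < 0$, so $x$ is not a steady state. (Equivalently, in your framework: rebuild $g$ from $\EE'$ rather than $\EE$; then $a^\trans p' > 0$ contradicts $g' \in T^\perp$.) So the ``oriented-matroid lemma'' you hope for is really the order-theoretic statement on common refinements of harmonious total preorders, and it must be applied to re-choose the chain graph after $\tau$ is known; as your argument stands, it cannot be completed.
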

\begin{proof}
Let $x^* \in \R^n_>$ be a CBE of $\dd{x}{t} = f_k(x) = Y A_k \, x^{\tilde Y}$,
and let $x \in \R^n_>$ {\em not} be a CBE.
We will find $z \in \R^n$ such that $z^\trans f_k(x) \neq 0$, 
and hence $x$ is not a steady state.
In fact,
given an auxiliary graph $(V,\EE)$
and using the new decomposition of the graph Laplacian, 
\[
A_k \diag(K_k) = - I_\EE \Ac I_\EE^\trans ,
\]
cf.~Eqn.~\ref{new},
\begin{align*}
z^\trans f_k(x) &= z^\trans Y A_k \, x^{\tilde Y} \\
&= - z^\trans Y I_\EE \Ac I_\EE^\trans \diag(K_k^{-1}) \, x^{\tilde Y} \\
&= - a^\trans \Ac \,a' ,
\end{align*}
and
\begin{align*}
a &= (Y I_\EE)^\trans z , \\
a' &= I_\EE^\trans \diag(K_k^{-1}) \, x^{\tilde Y} 
\end{align*}
fulfill
\[
a,a' \ge 0 \quad \text{and} \quad a \cdot a' > 0 .
\]
Since the core Laplacian matrix $\Ac$ is nonnegative with positive diagonal,
it follows that $z^\trans f_k(x)<0$, and $x$ is not an equilibrium.
It remains to identify a vector $z$ and a chain graph $(V,\EE)$,
representing a total order on the vertices.

By Eqn.~\eqref{cbe} 
for the positive CBE $x^* \in \R^n_>$,
the order on the entries of $\frac{x^{\tilde Y}}{K_k} \in \R^V_>$
is equivalent to the order on the entries of $(\frac{x}{x^*})^{\tilde Y} \in \R^V_>$.
By the monotonicity of the logarithm,
it is further equivalent to the order on the entries of $\tilde Y^\trans \tilde z \in \R^V$ with $\tilde z = \ln \frac{x}{x^*} \in \R^n$.
Hence,
\[
\tilde \tau = \sign \left( I_\Omega^\trans \diag (K_k^{-1}) \, x^{\tilde Y} \right) 
= \sign \left( I_\Omega^\trans \, \tilde Y^\trans \tilde z \right) = \sign \left( ( \tilde Y I_\Omega)^\trans \tilde z \right) . 
\]
Since $x$ is not a CBE, $\tilde \tau \in \sign(\tilde T)$ is nonzero.
If $\sign(\tilde T) \subseteq \sign(T)^\updownarrow$,
then there exists a nonzero $\tau \in \sign(T)$ 
and a corresponding $z \in \R^n$ with
\[
\tau = \sign \left( ( Y I_\Omega)^\trans z \right) 
\]
such that $\tilde \tau \le \tau$ or $\tilde \tau \ge \tau$ (and hence $\tau \hads \tilde \tau > 0$).

By Lemma~\ref{lem:exist} below (for $z,\tilde z$ and nonzero $\tau,\tilde \tau$),
there exists a chain graph $G=(V,\EE)$ with incidence matrix $I_\EE \in \R^{V \times \EE}$ such that
\begin{align*}
a &= (Y I_\EE)^\trans z , \quad \text{(as above)} \\
\tilde a &= (\tilde Y I_\EE)^\trans \tilde z .
\end{align*}
fulfill
\[
a, \tilde a \ge 0 \quad \text{and} \quad a \cdot \tilde a > 0 .
\]
As shown above,
$
\sign \big( I_\Omega^\trans \diag (K_k^{-1}) \, x^{\tilde Y} \big) 
= \sign \big( ( \tilde Y I_\Omega)^\trans \tilde z \big) ,
$
and hence also
\[
\sign(a') 
= \sign \left( I_\EE^\trans \diag (K_k^{-1}) \, x^{\tilde Y} \right) 
= \sign \left( ( \tilde Y I_\EE)^\trans \tilde z \right) 
= \sign(\tilde a) 
\]
such that $a, a' \ge 0$ and $a \cdot a' > 0$, as claimed.

If $x$ is in the stoichiometric class of $x^*$, then $x=x^*+v$ with $v \in S$,
and $\sign(\tilde z) = \sign (\ln \frac{x^*+v}{x^*}) = \sign(v)$, by the monotonicity of the logarithm.
Hence, $\tilde z \in \Sigma(S)$ and $\tilde \tau \in \sign ( (\tilde Y I_\Omega)^\trans (\Sigma(S)) ) =: \mathcal{\tilde T}$,
and it is sufficient to require $\mathcal{\tilde T} \subseteq \sign(T)^\updownarrow$.
\end{proof}

\begin{lem} \label{lem:exist}
Let $(G_k,y,\tilde y)$ be a generalized mass-action system 
based on a labeled, simple digraph $G_k=(V,E,k)$ and the maps $y,\tilde y$ (the matrices $Y,\tilde Y$).
Further, let $I_\Omega \in \R^{V \times \Omega}$ be the incidence matrix of $(V,\Omega)$,
the complete graph (within components).

Now, let $z, \tilde z \in \R^n$ and $\tau = \sign \big( (Y I_\Omega)^\trans z \big)$,
$\tilde \tau = \sign \big( (\tilde Y I_\Omega)^\trans \tilde z \big)$ be nonzero.
If $\tau \hads \tilde \tau \ge 0$,
then there exists a chain graph $G=(V,\EE)$ with incidence matrix $I_\EE \in \R^{V \times \EE}$ such that
\[
a = (Y I_\EE)^\trans z \ge 0, \quad
\tilde a = (\tilde Y I_\EE)^\trans \tilde z \ge 0,
\quad \text{and} \quad
a,\tilde a \neq 0 .
\]
If further $\tau \le \tilde \tau$ or $\tau \ge \tilde \tau$,
then
\[
a \cdot \tilde a > 0 .
\]
\end{lem}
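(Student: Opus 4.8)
The plan is to reduce the statement to a purely order-theoretic fact about two vectors on the vertex set and then build the chain graph explicitly. Set $u = Y^\trans z \in \R^V$ and $\tilde u = \tilde Y^\trans \tilde z \in \R^V$. Since $I_\Omega$ is the incidence matrix of the complete graph within components, the entry of $(Y I_\Omega)^\trans z = I_\Omega^\trans u$ indexed by $(i \to i') \in \Omega$ equals $u_{i'} - u_i$, and likewise the entry of $(\tilde Y I_\Omega)^\trans \tilde z = I_\Omega^\trans \tilde u$ equals $\tilde u_{i'} - \tilde u_i$. Thus $\tau_{i \to i'} = \sign(u_{i'} - u_i)$ and $\tilde\tau_{i \to i'} = \sign(\tilde u_{i'} - \tilde u_i)$ merely encode the total (pre)orders of the entries of $u$ and of $\tilde u$ within each strongly connected component. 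Correspondingly, for any chain graph $(V,\EE)$ the vectors $a = I_\EE^\trans u$ and $\tilde a = I_\EE^\trans \tilde u$ collect the consecutive differences of $u$ and $\tilde u$ along the chain. Hence the goal is to order the vertices of each component into a chain $i_1 \to \cdots \to i_m$ along which both $u$ and $\tilde u$ are non-decreasing.

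The key step is the construction of such a chain, and this is exactly where the harmoniousness hypothesis $\tau \hads \tilde\tau \ge 0$ enters. Within each component I would sort the vertices lexicographically, primarily by the value $u_i$ and breaking ties by $\tilde u_i$. For two consecutive vertices $i, i'$ in the resulting order there are two cases. If $u_i < u_{i'}$, then $\tau_{i \to i'} = +$, and harmoniousness forces $\tilde\tau_{i \to i'} \ge 0$, i.e. $\tilde u_{i'} \ge \tilde u_i$; if instead $u_i = u_{i'}$, then the tie-breaking gives $\tilde u_i \le \tilde u_{i'}$. In both cases $u_{i'} - u_i \ge 0$ and $\tilde u_{i'} - \tilde u_i \ge 0$, so $a \ge 0$ and $\tilde a \ge 0$, as required. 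I regard the verification that harmoniousness makes a common non-decreasing ordering possible as the main obstacle; conceptually it says the two within-component preorders admit a common monotone linear extension, and without $\tau \hads \tilde\tau \ge 0$ no single chain could be non-decreasing for both $u$ and $\tilde u$ simultaneously.

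It remains to establish the non-vanishing and strict-positivity claims, which follow from the monotonicity already secured. Since $\tau \neq 0$, the vector $u$ is non-constant on at least one component; along the sorted chain $u$ is non-decreasing, so its values range from the minimum to the maximum over that component, and some consecutive step must increase $u$ strictly, whence $a \neq 0$. The same argument with $\tilde\tau \neq 0$ gives $\tilde a \neq 0$. Finally, write $a \cdot \tilde a = \sum_{(i \to i') \in \EE} (u_{i'} - u_i)(\tilde u_{i'} - \tilde u_i)$, a sum of non-negative terms. If $\tau \le \tilde\tau$, I pick an edge with $u_{i'} - u_i > 0$ (it exists since $a \neq 0$); there $\tau_{i \to i'} = +$, and $\tau \le \tilde\tau$ forces $\tilde\tau_{i \to i'} = +$ in the sign poset, so this edge contributes a strictly positive term and $a \cdot \tilde a > 0$. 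The case $\tau \ge \tilde\tau$ is symmetric, choosing instead an edge with $\tilde u_{i'} - \tilde u_i > 0$.
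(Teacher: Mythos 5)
Your proof is correct and follows essentially the same route as the paper: it reduces the claim to the total (pre)orders of the entries of $Y^\trans z$ and $\tilde Y^\trans \tilde z$ on $V$, produces a common monotone linear order (your explicit lexicographic sort with tie-breaking is precisely the construction the paper abstracts into its appendix lemma on harmonious total preorders, Lemma~\ref{lem:order}), and concludes nonnegativity, non-vanishing, and strict positivity of $a \cdot \tilde a$ by the same edge-picking argument. No gaps.
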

\begin{proof}
The sign vectors $\tau, \tilde \tau \in \{-,0,+\}^\Omega$ represent total orders on the entries of $Y^\trans z$, $\tilde Y^\trans \tilde z \in \R^V$
and corresponding total preorders $\le_1, \le_2$ on $V$,
cf.~Appendix~\ref{app:order}.

If $\tau \hads \tilde \tau \ge 0$,
then $\le_1, \le_2$ are harmonious
in the sense that $i <_1 j$ implies $i \le_2 j$ (and vice versa) or, 
explicitly, $( y(j)-y(i) )^\trans z > 0$ implies $( \tilde y(j)- \tilde y(i) )^\trans \tilde z \ge 0$, for all $i,j \in V$.
By Lemma~\ref{lem:order} in Appendix~\ref{app:order}, 
there exists a total strict order $<$ on $V$ such that $i < j$ implies $i \le_1 j$ and $i \le_2 j$.
Clearly, $<$ can be represented by a chain graph $(V,\EE)$.
For $a=(Y I_\EE)^\trans z$, $\tilde a =(\tilde Y I_\EE)^\trans \tilde z \in \R^\EE$,
this implies $a,\tilde a \ge 0$.
Since $\tau, \tilde \tau \neq 0$, not all entries of $Y^\trans z$, $\tilde Y^\trans \tilde z$ are equal,
and $a, \tilde a \neq 0$.

Finally, if $\tau \le \tilde \tau$, consider $(i \to j) \in \EE$
such that $\tau_{i \to j} = +$, that is, $a_{i \to j} = ( y(j)-y(i) )^\trans z) > 0$.
Then, also $\tilde \tau_{i \to j} = +$, that is, $\tilde a_{i \to j} = ( \tilde y(j)-\tilde y(i) )^\trans z) > 0$, 
and $a \cdot \tilde a > 0$.
If $\tau \ge \tilde \tau$, consider $(i \to j) \in \EE$
such that $\tilde \tau_{i \to j} = +$,
and use the same argument.
\end{proof}


\subsection{Linear stability of complex-balanced equilibria} \label{sec:stab}

%

Theorem~\ref{thm:stab_unique} provides necessary conditions
for the linear stability of complex balanced-equilibria (for all rate constants),
namely, their uniqueness (in every stoichiometric class, for all rate constants);
technically, $\sign(S) \cap \sign(\tilde S^\perp) = \{0\}$.
%

Using the new decomposition of the graph Laplacian,
we first provide a new proof of Theorem~\ref{thm:stab_unique}. 

\begin{proof}[Proof of Theorem~\ref{thm:stab_unique}]
Assume $\sign(S) \cap \sign(\tilde S^\perp) \neq \{0\}$,
that is, there exists a nonzero $\tau \in \sign(S) \cap \sign(\tilde S^\perp)$ 
and hence nonzero $v \in S$, $\tilde v \in \tilde S^\perp$, and $x \in \R^n_>$ such that $v = \diag(x) \, \tilde v$.

Since the underlying network is weakly reversible,
$x \in \R^n_>$ is a CBE for some rate constants $k$, cf.~\cite[Lemma 1, Proof]{MuellerRegensburger2014}.
For an auxiliary graph $(V,\EE)$,
\begin{align*}
J(x) &= Y A_k \diag(x^{\tilde Y}) \, \tilde Y^\trans \diag(x^{-1}) \\
&= - Y I_\EE \Ac I_\EE^\trans \diag(K_k^{-1}) \diag(x^{\tilde Y}) \, \tilde Y^\trans \diag(x^{-1}) \\
&= - Y I_\EE \Ac I_\EE^\trans D_V \tilde Y^\trans \diag(x^{-1}) \\
&= - Y I_\EE \Ac D_{k,\EE}(x) I_\EE^\trans \tilde Y^\trans \diag(x^{-1})
\end{align*}
with $D_{k,\EE}(x) \in \pd$,
see the discussion before Eqn.~\eqref{Jrewrite}.
Now, 
\begin{align*}
J(x) \, v &= \ldots \, (\tilde Y I_\EE)^\trans \diag(x^{-1}) \, v \\
&= 0 ,
\end{align*}
since
$\ker (\tilde Y I_\EE)^\trans = \im \tilde Y I_\EE = \im \tilde Y I_E = \tilde S$
and
${\diag(x^{-1}) \, v = \tilde v \in \tilde S}$.

To summarize, there are rate constants $k$ and a CBE $x$ such that $J(x)$ has a zero eigenvalue on $S$,
that is, $x$ is not linearly stable (in its stoichiometric class).
\end{proof}


A main result of this work, Proposition~\ref{pro:Pmatrix} below,
provides a sufficient condition for the linear stability of CBE (for all rate constants).
It guarantees that the negative of the Jacobian matrix is a $P$-matrix.
For planar systems, this is equivalent to diagonal stability.
For general systems, the $P$-matrix property and sign-symmetry together guarantee stability.

\begin{dfn}
A matrix $A \in \R^{n \times n}$ is a {\em $P$-matrix} if all its principal minors are positive.
\end{dfn}

\begin{fac}[\cite{FiedlerPtak1962}, (3,3) Theorem] \label{fac:fiedler}
A matrix $A \in \R^{n \times n}$ is a $P$-matrix
if and only if, for all nonzero $x \in \R^n$, there is $D \in \pd$ such that $Dx \cdot Ax = x^\trans \! DA x> 0$.
\end{fac}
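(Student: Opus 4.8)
The plan is to split the statement into an elementary reduction and a single genuinely nontrivial core. First I would reduce the diagonal formulation to a pointwise sign condition: for a fixed nonzero $x$, the existence of some $D \in \pd$ with $Dx \cdot Ax = x^\trans DA x = \sum_i d_i\, x_i (Ax)_i > 0$ is equivalent to the existence of a single index $i$ with $x_i (Ax)_i > 0$. Indeed, if $x_i(Ax)_i \le 0$ for all $i$, then every positively weighted sum is $\le 0$, so no $D$ works; conversely, if $x_k(Ax)_k > 0$ for some $k$, taking $d_k = 1$ and $d_j = \varepsilon$ for $j \neq k$ makes the sum positive for small $\varepsilon > 0$. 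Quantifying over all nonzero $x$, the Fact becomes the \emph{no-sign-reversal} characterization: $A$ is a P-matrix if and only if there is no nonzero $x$ with $x_i (Ax)_i \le 0$ for all $i$. This is the form I would actually prove.

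For the direction ``not a P-matrix $\Rightarrow$ a sign-reversing vector exists,'' I would pass to a minimal offending principal submatrix. Choosing an index set $J$ of minimal cardinality with $\det A_{JJ} \le 0$, the matrix $M = A_{JJ}$ has all \emph{proper} principal minors positive while $\det M \le 0$. If $\det M = 0$, any nonzero $v \in \ker M$ satisfies $v_i(Mv)_i = 0$ for all $i$. If $\det M < 0$, then since the non-real eigenvalues occur in conjugate pairs with positive product, the product of the real eigenvalues is negative, so $M$ has a real eigenvalue $\lambda < 0$; a real eigenvector $v$ then gives $v_i(Mv)_i = \lambda v_i^2 \le 0$ for all $i$. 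In either case, extending $v$ by zeros outside $J$ yields a nonzero sign-reverser for $A$, since coordinates outside $J$ contribute $0$ and those inside reproduce $M$. This direction is clean.

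The converse, ``$A$ a P-matrix $\Rightarrow$ no sign reversal,'' is the heart of the matter, and I would argue by induction on $n$. Given a nonzero $x$ with $x_i(Ax)_i \le 0$ for all $i$, if some coordinate $x_k$ vanishes I would restrict to the complementary principal submatrix, which is again a P-matrix and still carries a nonzero sign-reverser, and invoke the inductive hypothesis. In the remaining case $x$ has full support, and conjugating by the sign matrix $S = \diag(\sign x_i)$ replaces $A$ by the P-matrix $SAS$ and $x$ by $|x| > 0$; the reversal condition becomes $(SAS)|x| \le 0$ componentwise. Thus everything reduces to the key sub-lemma: a P-matrix cannot send a strictly positive vector $u$ to a componentwise nonpositive vector $Au$.

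I expect this sub-lemma to be the main obstacle. Already for $n = 2$ it requires more than sign-chasing: assuming $Au \le 0$ forces both off-diagonal entries negative, and multiplying the two scalar inequalities yields $ad \le bc$, contradicting $\det A > 0$ — a product-of-inequalities trick rather than a linear one. For general $n$ I would induct through the Schur complement $A/a_{nn}$ (again a P-matrix) after permuting a strictly negative coordinate of $Au$ into the last position; the difficulty is that the off-diagonal blocks carry arbitrary signs, so the induced right-hand side $w' - (w_n/a_{nn})\,b$ is not manifestly nonpositive, and closing the induction requires using the positivity of the principal minors quantitatively (as in the Fiedler--Pt\'ak / Gale--Nikaido arguments) rather than by sign bookkeeping alone. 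Once the sub-lemma is in place, the two directions combine with the reduction to give the stated equivalence.
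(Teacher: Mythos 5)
Your reductions are sound: the equivalence of ``some $D \in \pd$ with $x^\trans D A x > 0$'' with ``some index $i$ with $x_i(Ax)_i > 0$'', the resulting no-sign-reversal formulation, the easy direction via a principal submatrix with $\det A_{JJ} \le 0$ (kernel vector, or negative real eigenvalue, extended by zeros), and, on the hard side, the restriction to the support and the conjugation by $\diag(\sign x_i)$ are all correct. (For the record, the paper gives no proof of this Fact at all; it simply cites the (3,3) Theorem of Fiedler and Pt\'ak, so the only benchmark is the classical argument.)

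The genuine gap is exactly where you place it: the sub-lemma that a P-matrix $B$ cannot satisfy $Bu \le 0$ for some $u > 0$. You prove it only for $n=2$ and concede that your Schur-complement induction does not close, because the induced right-hand side $w' - (w_n/a_{nn})\,b$ has no definite sign. As written, the central implication ``P-matrix $\Rightarrow$ no sign reversal'' is therefore not established, and appealing to ``the Fiedler--Pt\'ak / Gale--Nikaido arguments'' at that point is circular, since that is the statement being proved. The standard way to close it avoids Schur complements entirely: given $x \ne 0$ with $x_i(Ax)_i \le 0$ for all $i$, restrict to $J = \{ i \mid x_i \ne 0\}$, so that $x_i (A_{JJ} x_J)_i \le 0$ with all $x_i \ne 0$, and define the nonnegative diagonal matrix $D$ by $d_i = -(A_{JJ}x_J)_i / x_i \ge 0$ for $i \in J$. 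Then $(A_{JJ} + D)\, x_J = 0$ with $x_J \neq 0$, so $A_{JJ}+D$ is singular. But for any P-matrix $M \in \R^{J \times J}$ and nonnegative diagonal $D$, multilinear expansion of the determinant gives
\[
\det(M + D) = \sum_{K \subseteq J} \det(M_{KK}) \prod_{j \in J \setminus K} d_j \;\ge\; \det M \;>\; 0
\]
(with the convention $\det M_{\emptyset\emptyset} = 1$), since every term is nonnegative and the term $K = J$ is $\det M$; hence $A_{JJ}+D$ is nonsingular --- a contradiction. This also makes your sign-conjugation step unnecessary. With this substitution for the sub-lemma, your outline becomes a complete proof.
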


\begin{dfn}
A matrix $A \in \R^{n \times n}$ is {\em sign symmetric} if its minors satisfy
$A[\al|\be] \, A[\be|\al] \ge 0$ for all $\al, \be \subset \{1,\ldots,n\}$ with $|\al|=|\be|$,
that is, if pairs of symmetrically placed minors do not have opposite signs.
\end{dfn}

\begin{thm}[\cite{Carlson1974}, Theorem] \label{thm:old:n}
If $A \in \R^{n \times n}$ is sign-symmetric
and $-A$ is a $P$-matrix, then $A$ is stable.
\end{thm}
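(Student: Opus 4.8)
The plan is to pass to $B = -A$ and prove the equivalent statement that a sign-symmetric P-matrix is \emph{positive stable}, i.e.\ all its eigenvalues have positive real part. This reformulation is legitimate: since $(-A)[\al|\be]\,(-A)[\be|\al] = (-1)^{2|\al|}A[\al|\be]\,A[\be|\al] = A[\al|\be]\,A[\be|\al] \ge 0$, the matrix $B$ is sign-symmetric whenever $A$ is, and $A$ is stable exactly when $B$ is positive stable. It then suffices to show that $B$ has no eigenvalue on the imaginary axis, because I would conclude by a continuity argument: the number of eigenvalues in the open right half-plane is locally constant on the set of real matrices with no imaginary eigenvalue, and I would connect $B$, within the class of sign-symmetric P-matrices, to a positive definite diagonal matrix (which lies in the class and is obviously positive stable with all $n$ eigenvalues in the right half-plane). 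If no eigenvalue crosses the imaginary axis along the path, that count stays equal to $n$, giving positive stability of $B$.

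The first, easy half is to rule out eigenvalues on the nonpositive real axis, using only the P-matrix property. Writing $E_k(B)$ for the sum of the $k\times k$ principal minors of $B$, one has
\[
\det(B + t\,\id) = \sum_{k=0}^{n} E_k(B)\, t^{\,n-k},
\]
and all $E_k(B)>0$ because $B$ is a P-matrix; hence $\det(B+t\,\id)>0$ for every $t\ge 0$ (the constant term $E_n(B)=\det B$ is positive and all remaining terms are nonnegative), so $B$ has no eigenvalue in $(-\infty,0]$. In particular $0$ is not an eigenvalue and $A$ is invertible.

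The crux is to exclude a purely imaginary conjugate pair $\pm i\omega$, $\omega\neq 0$, and this is the only place sign-symmetry enters. I would use the second additive compound $B^{[2]}$, the $\binom{n}{2}\times\binom{n}{2}$ matrix whose eigenvalues are the pairwise sums $\mu_i+\mu_j$ ($i<j$) of the eigenvalues $\mu_i$ of $B$: a pair $\pm i\omega$ would produce the eigenvalue $i\omega+(-i\omega)=0$, so $B^{[2]}$ would be singular. Thus it is enough to prove $\det B^{[2]}\neq 0$, and here I claim that sign-symmetry together with the P-matrix property forces $B^{[2]}$ to be itself a P-matrix, so $\det B^{[2]}>0$. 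The entries of $B^{[2]}$ are signed $1\times 1$ and $2\times 2$ minors of $B$, and sign-symmetry is precisely the hypothesis making symmetrically placed minors nonnegatively paired; via the Sylvester--Franke and Jacobi identities relating the principal minors of $B^{[2]}$ to products of minors of $B$, one expects every principal minor of $B^{[2]}$ to come out positive. Carrying out this minor bookkeeping cleanly is the main obstacle. As an alternative route to the same step I would argue through Fact~\ref{fac:fiedler}: if $Bw = i\omega w$ with $w=u+iv$, then $Bu=-\omega v$ and $Bv=\omega u$, so for $P=[\,u\mid v\,]$ and any $D\in\pd$ the compression $G=P^\trans D B P$ satisfies
\[
\operatorname{tr} G = 0, \qquad \det G = \omega^2\big((u^\trans\! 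D u)(v^\trans\! D v)-(u^\trans\! D v)^2\big) > 0
\]
(Cauchy--Schwarz, with $u,v$ independent). Thus $u^\trans D B u$ and $v^\trans D B v$ are negatives of each other for \emph{every} $D$, so no single $D$ makes both quadratic forms positive; the content of the hypotheses is exactly that a single $D\in\pd$ can be chosen giving $G$ positive trace, which is the contradiction, and making this simultaneous choice rigorous is again where the real work lies.

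With the imaginary axis excluded for every member of the class, I close the homotopy as sketched in the first paragraph, shrinking the off-diagonal part of $B$ toward zero and checking that sign-symmetry and the P-matrix property persist along the way. I expect the two genuinely hard points to be (i) the compound/minor computation certifying $\det B^{[2]}\neq 0$ (equivalently, the simultaneous-$D$ statement above), where sign-symmetry is indispensable, and (ii) verifying that the connecting path stays inside the class; should (ii) prove delicate, it can be replaced by a perturbation/degree argument that uses only the already-established fact that the entire class avoids the imaginary axis.
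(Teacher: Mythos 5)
Your skeleton---first exclude eigenvalues on the imaginary axis, then transfer stability along a path on which no crossing can occur---is exactly the architecture of the proof this paper gives (not for Theorem~\ref{thm:old:n} itself, which it cites, but for its rank-deficient generalization, Theorem~\ref{thm:carlson}, whose full-rank specialization is the statement at hand). However, both load-bearing steps of your plan are left open, and they are genuine gaps, not deferred bookkeeping. For the imaginary-axis step, your claim that sign-symmetry plus the P-property forces the second additive compound $B^{[2]}$ to be a P-matrix is far stronger than the needed nonsingularity of $B^{[2]}$ and is nowhere established; your alternative route via Fact~\ref{fac:fiedler} likewise stops exactly at the point you call ``where the real work lies.'' This step is the crux of Carlson's theorem: the full strength of sign-symmetry (all pairs of symmetrically placed minors, not only those whose index sets differ in one element) must enter, since the weakly sign-symmetric analogue is false, cf.\ the discussion of the GKK conjecture in \cite{HershkowitzKeller2003}. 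The paper resolves it by invoking \cite[Lemma~2.2]{HershkowitzKeller2003}: sign-symmetry \emph{alone} already forbids nonzero eigenvalues on the imaginary axis; no P-property and no compound P-matrix claim is needed there.

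The second gap is the homotopy, and here your path is the wrong one. Shrinking the off-diagonal part of $B$ toward its diagonal requires the whole segment to consist of sign-symmetric P-matrices, and neither property is preserved under off-diagonal scaling in any obvious (or general) way; your fallback---``a perturbation/degree argument using only the fact that the class avoids the imaginary axis''---is circular, because pointwise avoidance of the imaginary axis makes the eigenvalue count locally constant but gives nothing to propagate unless you already have a path \emph{inside the class} from $B$ to a known stable matrix, which is precisely what is unproven. The paper's proof of Theorem~\ref{thm:carlson} avoids this entirely by deforming the diagonal \emph{multiplier} rather than the matrix: by the Fisher--Fuller/Ballantine theorem \cite{FisherFuller1958,Ballantine1970} (Theorems~\ref{thm:ballantine} and \ref{thm:ballantine2}; positive leading principal minors of $-A$ suffice), there exists $D^* \in \pd$ with $D^*\!A$ stable, and one then follows the path $D_tA$ with $D_t = (1-t)D^* + t\,\id$. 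Since left multiplication by a positive diagonal matrix preserves sign-symmetry and the signs of all principal minors, every $D_tA$ is sign-symmetric with $\det(D_tA) \neq 0$, so no eigenvalue can reach the imaginary axis along the path, and stability of $D^*\!A$ transfers to $A$ at $t=1$. This diagonal homotopy is the idea your proposal is missing; with it (plus the cited Hershkowitz--Keller lemma in place of your compound-matrix claim) your plan closes, and it in fact yields the stronger conclusion that $A$ is $D$-stable under the weaker hypothesis that $-A$ has positive \emph{leading} principal minors.
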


\begin{thm}[\cite{Cross1978}, Fact 2.8.1] \label{thm:old:2}
A matrix $A \in \R^{2 \times 2}$ is diagonally stable 
if and only if $-A$ is a $P$-matrix.
\end{thm}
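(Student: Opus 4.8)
The statement is a biconditional about $2 \times 2$ matrices, so the plan is to prove it by a direct computation with $A = \begin{pmatrix} a & b \\ c & d \end{pmatrix}$ and a candidate Lyapunov matrix $D = \diag(p,q) \in \pd$. Recall that diagonal stability of $A$ means $M := DA + A^\trans D < 0$ for some such $D$, where here
\[
M = \begin{pmatrix} 2pa & pb+qc \\ pb+qc & 2qd \end{pmatrix} .
\]
A symmetric $2 \times 2$ matrix is negative definite exactly when its $(1,1)$ entry is negative and its determinant is positive, so the whole argument reduces to controlling the sign of $\det M = 4pq\,ad - (pb+qc)^2$.

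For the necessity direction ($\Rightarrow$), suppose $A$ is diagonally stable with witness $D$. The diagonal entries $2pa$ and $2qd$ of the negative definite matrix $M$ must be negative, so $a < 0$ and $d < 0$. Moreover, diagonal stability is the special case $P = D$ of the Lyapunov condition, so $A$ is stable; both eigenvalues then have negative real part and their product $\det A = ad - bc$ is positive. The three inequalities $-a > 0$, $-d > 0$, $ad - bc > 0$ are precisely the positivity of the principal minors of $-A$, i.e.\ $-A$ is a P-matrix.

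For the sufficiency direction ($\Leftarrow$), suppose $-A$ is a P-matrix, so $a, d < 0$ and $ad - bc > 0$, and I would construct $D$. Since $a<0$ the leading entry of $M$ is already negative, so it suffices to arrange $\det M > 0$. Setting $t = q/p > 0$ and dividing $\det M$ by $p^2$ turns this into the quadratic inequality
\[
c^2 t^2 - (4ad - 2bc)\,t + b^2 < 0 .
\]
Its discriminant computes to $16\,ad(ad-bc)$, which is positive because $ad > 0$ and $ad - bc > 0$, so the quadratic (with nonnegative leading coefficient $c^2$) has two real roots. Their product is $b^2/c^2 \ge 0$ and their sum is $(4ad - 2bc)/c^2 = \big(2ad + 2(ad-bc)\big)/c^2 > 0$, hence both roots are positive and the quadratic is negative on an open subinterval of $(0,\infty)$. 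Choosing any $t$ there, together with any $p>0$ and $q = tp$, yields $D \in \pd$ with $M < 0$.

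The determinant and discriminant evaluations are routine; the main (small) obstacle is exactly the positivity of the ratio $t=q/p$ in the sufficiency direction. Merely having real roots is not enough — I must check the sign of their sum, which is where the hypothesis $ad-bc>0$ enters a second time. I would also treat the degenerate case $c=0$ separately (there $\det M = 4pq\,ad - p^2 b^2 > 0$ holds for $q/p$ large since $ad>0$), so that dividing by $c^2$ in the main argument is legitimate.
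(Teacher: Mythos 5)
Your proof is correct, but there is nothing in the paper to compare it against: the paper states this result purely as an imported fact, citing Cross (1978), Fact 2.8.1, and gives no argument of its own. Your contribution is therefore a self-contained elementary verification of a black-box citation. The computation is sound throughout: the reduction of negative definiteness of $M = DA + A^\trans D$ to $M_{11}<0$ and $\det M>0$, the identity $\det M = 4pq\,ad-(pb+qc)^2$, the discriminant $16\,ad(ad-bc)$ of the quadratic in $t=q/p$, and---the two points where such proofs usually go wrong---the verification that the root sum $(4ad-2bc)/c^2 = \bigl(2ad+2(ad-bc)\bigr)/c^2$ is positive (so the negativity interval actually meets $(0,\infty)$) and the separate treatment of $c=0$, where the quadratic degenerates to a linear condition satisfied for $q/p$ large. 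One small remark: in the necessity direction you invoke Lyapunov's theorem to get $\det A>0$, which is fine, but it can be avoided entirely, since $\det M>0$ rearranges to $4pq(ad-bc) > (pb-qc)^2 \ge 0$, giving $ad-bc>0$ directly from the same determinant you already computed; that would make the whole proof a single two-by-two calculation with no external input. What your approach buys relative to the paper's citation is transparency and an explicit construction of the diagonal witness $D$; what the citation buys the paper is brevity, since the result is classical and the paper's emphasis lies on the graph-Laplacian decomposition and sign-vector conditions, not on low-dimensional matrix stability criteria.
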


Clearly, these results are non-trivial only for matrices with full rank.
They motivate Proposition~\ref{pro:Pmatrix}.
Its proof uses the new decomposition of the graph Laplacian,
monomial evaluation orders,
and the characterization of a $P$-matrix via positive diagonal matrices.

\begin{pro} \label{pro:Pmatrix}
Let $(G_k,y,\tilde y)$ be a generalized mass-action system
with $S = \tilde S = \R^n$
and 
\[
\sign \left( (\tilde Y I_\Omega)^\trans (O) \right) 
\subseteq 
\sign \left( (Y I_\Omega)^\trans (\overline O) \right)^\updownarrow, 
\quad \text{for all open orthants } 
O \subset \R^n .
\]
Then, for every positive CBE,
the negative of the Jacobian matrix is a $P$-matrix.
\end{pro}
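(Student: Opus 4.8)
The plan is to verify the Fiedler--Pt\'ak characterization of a P-matrix (Fact~\ref{fac:fiedler}) for $A=-J(x)$: for every nonzero $w\in\R^n$ I must produce a positive diagonal matrix $D\in\pd$ with $w^\trans D(-J(x))\,w>0$. The whole point is to convert this quadratic expression into the nonnegative bilinear form $a^\trans(\Ac D_{k,\EE})\,\tilde a$ governed by the core Laplacian, exactly as in the proof of Theorem~\ref{thm:noother}, and then to exploit the per-orthant sign-vector hypothesis to force $a,\tilde a\ge 0$ together with $a\cdot\tilde a>0$.

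Concretely, I would fix the positive CBE $x$ and a nonzero $w$, set $u=\diag(x^{-1})\,w$ (so $\sign u=\sign w$, hence $u\neq 0$ lies in the relatively open orthant $O=\sign^{-1}(\sign w)$), and record the order $\tilde\tau=\sign\big((\tilde Y I_\Omega)^\trans u\big)\in\sign\big((\tilde Y I_\Omega)^\trans(O)\big)$. Because $\tilde S=\R^n$ gives $\ker(\tilde Y I_\Omega)^\trans=\tilde S^\perp=\{0\}$, the vector $\tilde\tau$ is nonzero; this is precisely where the full-dimensionality hypothesis enters. The hypothesis then yields a nonzero $\tau\in\sign\big((Y I_\Omega)^\trans(\overline O)\big)$ comparable to $\tilde\tau$ (so in particular harmonious), realized by some $z\in\overline O$ with $\tau=\sign\big((Y I_\Omega)^\trans z\big)$. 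Feeding $(z,u)$ and $(\tau,\tilde\tau)$ into Lemma~\ref{lem:exist} produces a chain graph $(V,\EE)$ --- which is a legitimate auxiliary graph --- such that $a=(Y I_\EE)^\trans z\ge 0$ and $\tilde a=(\tilde Y I_\EE)^\trans u\ge 0$ are nonzero and $a\cdot\tilde a>0$.

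With this $\EE$ I use Decomposition~I, Eqn.~\eqref{Jrewrite}, to write $-J(x)=Y I_\EE\,\Ac\,D_{k,\EE}(x)\,I_\EE^\trans\tilde Y^\trans\diag(x^{-1})$. Since $\EE$ is a chain graph, the core Laplacian $\Ac$ is nonnegative with positive diagonal, and $D_{k,\EE}(x)\in\pd$ is block-diagonal, so $M:=\Ac D_{k,\EE}(x)$ is again nonnegative with positive diagonal. A direct computation using $I_\EE^\trans\tilde Y^\trans\diag(x^{-1})\,w=(\tilde Y I_\EE)^\trans u=\tilde a$ then gives $z^\trans(-J(x))\,w=a^\trans M\,\tilde a$; and because $a,\tilde a\ge 0$ and $M$ is nonnegative with positive diagonal, the diagonal contribution $\sum_i a_i M_{ii}\tilde a_i$ alone is positive (as $a\cdot\tilde a>0$), so $a^\trans M\,\tilde a>0$.

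The one genuine obstacle is that Fact~\ref{fac:fiedler} demands a test vector of the form $Dw$, i.e.\ $z\in O$ with $\sign z=\sign w$, whereas the realizer supplied by the hypothesis lies only in the closure $\overline O$ and may carry extra zero coordinates. I would resolve this by continuity, using that $a^\trans M\,\tilde a>0$ is a strict inequality: replacing $z$ by $z_\epsilon=z+\epsilon w$, which lies in $O$ for every small $\epsilon>0$ (as one checks coordinatewise, the coordinates where $w_i=0$ stay zero and the others inherit $\sign w_i$), keeps $\EE$, $\tilde a$, and $M$ fixed and changes only $a_\epsilon=(Y I_\EE)^\trans z_\epsilon\to a$, so $a_\epsilon^\trans M\,\tilde a>0$ persists for small $\epsilon$. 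Choosing $D\in\pd$ with $Dw=z_\epsilon$ then gives $w^\trans D(-J(x))\,w=a_\epsilon^\trans M\,\tilde a>0$; since $w$ was an arbitrary nonzero vector, Fact~\ref{fac:fiedler} concludes that $-J(x)$ is a P-matrix.
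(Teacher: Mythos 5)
Your proof is correct, and it follows the paper's own strategy: the Fiedler--Pt\'ak criterion (Fact~\ref{fac:fiedler}), Decomposition~I via the core Laplacian of a chain graph (Eqn.~\eqref{Jrewrite}), and Lemma~\ref{lem:exist} applied to the comparable pair $(\tau,\tilde\tau)$ supplied by the per-orthant hypothesis. Two differences are cosmetic: you test $-J(x)$ directly with $u=\diag(x^{-1})\,w$ instead of first reducing to $-\JJ(x)$ and transferring via positive diagonal scaling, and you work with relatively open orthants $\sign^{-1}(\sign w)$ -- which is exactly the paper's convention for ``open orthant'' (in general not full-dimensional), so test vectors $w$ with zero coordinates are covered. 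The one substantive difference is the endgame. The paper distinguishes case~(i), where a comparable sign vector is realized inside $O$ itself (so the realizer is already of the form $D'\tilde z$), from case~(ii), where it is realized only on $\bd O$; case~(ii) is handled there by a sequence $z_n = D_n'\tilde z \to z'$, tracking how the sign vectors $\tau_n > \tau'$ relate to $\tilde\tau$, and arguing entrywise that the possibly negative coordinates of $a_n$ vanish in the limit while a strictly positive diagonal contribution persists. You collapse both cases into one: since $z\in\overline O$ forces $z_i=0$ wherever $w_i=0$, the perturbed vector $z+\epsilon w$ lies in $O$ for \emph{every} $\epsilon>0$, and the strict inequality $a^\trans M\,\tilde a>0$ (with your $M=\Ac D_{k,\EE}(x)$) survives the perturbation by plain continuity, with no need to control the sign vector of the perturbed point or the signs of $a_\epsilon$. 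This uniform perturbation argument is a genuine simplification of the paper's most delicate step; everything else is the same argument.
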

\begin{proof}
Let $x \in \R^n_>$ be a CBE
and $J(x) \in \R^{n \times n}$ be the corresponding Jacobian matrix,
\[
J(x) = Y A_k \diag(x^{\tilde Y}) \, \tilde Y^\trans \diag(x^{-1}) ,
\]
cf.~Eqn.~\eqref{J}.
We will show that the negative of the scaled Jacobian matrix
\[
\JJ(x) = Y A_k \diag(x^{\tilde Y}) \, \tilde Y^\trans ,
\] 
cf.~Eqn.~\eqref{Jred}, is a $P$-matrix.
(Then also $-J(x) = -\JJ(x) \diag(x^{-1})$ is a $P$-matrix,
since multiplication with a positive diagonal matrix does not affect the signs of principal minors.)

Using Fact~\ref{fac:fiedler}, for every nonzero $\tilde z \in \R^n$,
we will find a diagonal matrix $D' \in \pd$
such that 
\[
- \tilde z^\trans D' \JJ(x) \, \tilde z > 0 .
\]
Given an auxiliary graph $(V,\EE)$
and using the new decomposition of the graph Laplacian, 
\[
A_k \diag(K_k) = - I_\EE \Ac I_\EE^\trans ,
\]
cf.~Eqn.~\ref{new}, 
we can rewrite the scaled Jacobian matrix in the more symmetric form
\begin{align*}
\JJ(x) &= - Y I_\EE \Ac I_\EE^\trans \diag(K_k^{-1}) \diag(x^{\tilde Y}) \, \tilde Y^\trans \\
&= - Y I_\EE \Ac I_\EE^\trans D_V \tilde Y^\trans \\
&= - Y I_\EE \Ac D_{k,\EE}(x) I_\EE^\trans \tilde Y^\trans 
\end{align*}
with $D_{k,\EE}(x) \in \pd$,
see the discussion before Eqn.~\eqref{Jrewrite}.
Hence, we have
\begin{align*}
- \tilde z^\trans D' \JJ(x) \, \tilde z &= \tilde z^\trans D' \,Y I_\EE \Ac D_{k,\EE}(x) I_\EE^\trans \tilde Y^\trans \tilde z \\
&= z^\trans Y I_\EE \Acc (\tilde Y I_\EE)^\trans \tilde z \\
&= a^\trans \Acc \, \tilde a ,
\end{align*}
\blue{where} we introduced $z = D' \tilde z$ (in the same open orthant as $\tilde z$) and $\Acc = \Ac D_{k,\EE}(x)$.
Further, we introduced
\begin{align*}
a &= (Y I_\EE)^\trans z , \\
\tilde a &= (\tilde Y I_\EE)^\trans \tilde z .
\end{align*}
As stated above, for every nonzero $\tilde z \in \R^n$,
we will find $D' \in \pd$
such that $-\tilde z^\trans D' \JJ(x) \, \tilde z > 0$.
Ultimately,
we will find $z=D'\tilde z$ and a chain graph $(V,\EE)$
such that $a^\trans \Acc \, \tilde a > 0$.

Let $O \subset \R^n$ be an open orthant and $\tilde z \in O$ be nonzero.
By assumption, $\tilde S=\R^n$,
and we have $\ker (\tilde Y I_\Omega)^\trans = (\im (\tilde Y I_\Omega) )^\perp = \tilde S^\perp = \{0\}$.
Hence, $(\tilde Y I_\Omega)^\trans \tilde z \neq 0$
and further 
$
\tilde \tau = \sign \big( ( \tilde Y I_\Omega)^\trans \tilde z \big) \neq 0
$.
Obviously, $\tilde \tau \in \sign \big( (\tilde Y I_\Omega)^\trans (O) \big) =: \mathcal{\tilde T}$.

Analogously,
let $\mathcal{T} := \sign \big( (Y I_\Omega)^\trans (O) \big)$
and $\mathcal{T'} := \sign \big( (Y I_\Omega)^\trans (\overline{O}) \big)$,
where $\overline{O}$ is the corresponding closed orthant.
Clearly, $\mathcal{T} \subseteq \mathcal{T'}$.

By assumption, $ \mathcal{\tilde T} \subseteq \mathcal{T'}^\updownarrow$.
It remains to {\bf distinguish two cases}:
either (i) there exists a nonzero $\tau \in \mathcal{T}$
such that $\tilde \tau \le \tau$ or $\tilde \tau \ge \tau$, 
or (ii) there is no such $\tau \in \mathcal{T}$,
but there exists a nonzero $\tau' \in \mathcal{T'}$ such that $\tilde \tau \le \tau'$ or $\tilde \tau \ge \tau'$.

(i) For $\tau \in \mathcal{T}$,
there exists a corresponding $z \in O$ 
with
$
\tau = \sign \left( ( Y I_\Omega)^\trans z \right)
$.
Clearly, $z = D' \, \tilde z$ for some $D' \in \pd$.

By Lemma~\ref{lem:exist} (for $z,\tilde z$ and nonzero $\tau,\tilde \tau$),
there exists a chain graph $G=(V,\EE)$ with incidence matrix $I_\EE \in \R^{V \times \EE}$ such that
\[
a, \tilde a \ge 0 \quad \text{and} \quad a \cdot \tilde a > 0 
\]
and hence
\[
a^\trans \Acc \, \tilde a > 0 ,
\]
since the core Laplacian matrix $\Ac$ of a chain graph (and hence the matrix $\Acc$) is nonnegative with positive diagonal.

(ii) For $\tau' \in \mathcal{T'}$,
there exists a corresponding $z' \in \bd O$, on the boundary of the orthant,
with $\tau' = \sign \left( ( Y I_\Omega)^\trans z' \right)$.
Clearly, there is no $D' \in \pd$ such that $z' = D' \, \tilde z$.
Still, there exist sequences $D'_n \in \pd$ and $z_n = D'_n \, \tilde z \to z'$ with $\sign \left( ( Y I_\Omega)^\trans z_n \right) = \tau \in \mathcal{T}$.
Thereby, $\tau > \tau'$,
since a perturbation of $z' \in \bd O$ to $z_n \in O$ leads to $\tau'_{i \to i'}=0$ and $(\tau_n)_{i \to i'} \neq 0$ for some $(i \to i') \in \Omega$.
This also implies $\tilde \tau > \tau'$,
since $\tilde \tau \le \tau'$ yields $\tilde \tau < \tau$, that is, case~(i).

By Lemma~\ref{lem:exist} (for $z',\tilde z$ and nonzero $\tau',\tilde \tau$),
there exists a chain graph $G=(V,\EE)$ with incidence matrix $I_\EE \in \R^{V \times \EE}$ such that
\[
a' = (Y I_\EE)^\trans z' \ge 0, \quad
\tilde a = (\tilde Y I_\EE)^\trans \tilde z \ge 0, \quad
a',\tilde a \neq 0 ,
\quad \text{and} \quad
a' \cdot \tilde a > 0 ,
\]
but
\[
a_n = (Y I_\EE)^\trans z_n \not\ge 0 .
\]
Still, $a_n \to a'$ and hence $(a_n)_{i \to i'} \to 0$ if $(a_n)_{i \to i'} < 0$ for $(i \to i') \in \EE$.
Further, there is $(i \to i') \in \EE$ with $a'_{i \to i'} , \tilde a_{i \to i'} > 0$.
Hence, also $(a_n)_{i \to i'} > 0$ and $(a_n)_{i \to i'} \to a'_{i \to i'} > 0$
such that
\[
a_n^\trans \, \Acc \, \tilde a > 0 ,
\]
for $n$ large enough, which completes the argument.
\end{proof}

Theorems~\ref{thm:old:n} and~\ref{thm:old:2} together with Proposition~\ref{pro:Pmatrix} imply our main stability results (for full-dimensional stoichiometric subspace).

\begin{thm} \label{thm:n}
Let $(G_k,y,\tilde y)$ be a generalized mass-action system with $S=\tilde S=\R^n$.
If the sign condition in Proposition~\ref{pro:Pmatrix} holds
and the Jacobian matrix is sign-symmetric, 
then, for all rate constants, every positive CBE is linearly stable.
\end{thm}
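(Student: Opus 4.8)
The plan is to combine the P-matrix property established in Proposition~\ref{pro:Pmatrix} with the classical stability criteria recalled in Theorems~\ref{thm:old:n} and~\ref{thm:old:2}. Since we assume $S=\tilde S=\R^n$, the stoichiometric subspace is all of $\R^n$, so ``linearly stable in its stoichiometric class'' simply means that the Jacobian matrix $J(x)$ is stable in the ordinary sense (all eigenvalues have negative real part). This removes the complication of stability on a proper subspace and lets us invoke the full-rank criteria directly.

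First I would fix an arbitrary positive CBE $x \in \R^n_>$ and an arbitrary rate constant vector $k$. By Proposition~\ref{pro:Pmatrix}, the hypothesis that the sign condition holds guarantees that $-J(x)$ is a P-matrix. This is the crucial input and is already done; the present theorem is essentially a packaging step. Next I would separate two regimes by dimension. For $n \ge 1$ general, the additional hypothesis that $J(x)$ is sign-symmetric lets me apply Theorem~\ref{thm:old:n} (Carlson): setting $A = J(x)$, I have that $A$ is sign-symmetric and $-A$ is a P-matrix, so $A = J(x)$ is stable. Hence every eigenvalue of $J(x)$ has negative real part, which is exactly linear stability of $x$.

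I should note that Theorem~\ref{thm:old:2} (diagonal stability for $2\times 2$ matrices from $-A$ being a P-matrix) is the sharper tool in the planar case $n=2$, where sign-symmetry need not be assumed; but since Theorem~\ref{thm:n} explicitly carries the sign-symmetry hypothesis, the Carlson route suffices uniformly for all $n$ and I would present the argument through Theorem~\ref{thm:old:n} alone, perhaps remarking that for $n=2$ the conclusion already follows without sign-symmetry via Theorem~\ref{thm:old:2}. Finally I would observe that $x$ and $k$ were arbitrary, so the conclusion holds for every positive CBE and for all rate constants, as claimed.

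The main obstacle, if any, is not in this theorem itself but lives upstream in Proposition~\ref{pro:Pmatrix}, whose delicate two-case sign-vector argument does the real work. Here the only subtlety to verify is that the sign-symmetry hypothesis is stated for the Jacobian matrix $J(x)$ rather than the reduced Jacobian $\JJ(x)$, and that applying Carlson's theorem to $J(x)$ is legitimate: since $J(x) = \JJ(x)\,\diag(x^{-1})$ and multiplication by a positive diagonal matrix preserves the signs of all principal minors, the P-matrix property transfers from $-\JJ(x)$ to $-J(x)$ exactly as noted in the proof of Proposition~\ref{pro:Pmatrix}. With that remark in place, the theorem is an immediate corollary of the three cited results.
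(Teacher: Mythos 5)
Your proof is correct and follows exactly the route the paper takes: the paper derives Theorem~\ref{thm:n} directly by combining Proposition~\ref{pro:Pmatrix} (which gives that $-J(x)$ is a P-matrix) with Carlson's Theorem~\ref{thm:old:n}, noting as you do that $S=\R^n$ makes stability on the stoichiometric class ordinary matrix stability, and that Theorem~\ref{thm:old:2} is reserved for the planar result (Theorem~\ref{thm:2}). Your additional remark on transferring the P-matrix property between $\JJ(x)$ and $J(x)$ via the positive diagonal factor is the same observation made inside the proof of Proposition~\ref{pro:Pmatrix}.
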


\begin{thm} \label{thm:2}
Let $(G_k,y,\tilde y)$ be a planar generalized mass-action system with $S=\tilde S=\R^2$.
If the sign condition in Proposition~\ref{pro:Pmatrix} holds
then, for all rate constants,
every positive CBE is diagonally stable.
\end{thm}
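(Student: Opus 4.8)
The plan is to obtain the statement as a direct corollary of Proposition~\ref{pro:Pmatrix} and Theorem~\ref{thm:old:2}, since in the planar case the substantive work has already been carried out. First I would apply Proposition~\ref{pro:Pmatrix}. Its hypotheses are precisely $S=\tilde S=\R^n$ (here with $n=2$) together with the stated sign condition, both of which are assumed. Hence, for every positive CBE $x \in \R^2_>$ and all rate constants, the matrix $-J(x)$ is a P-matrix.

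Next I would feed this into Theorem~\ref{thm:old:2} (Cross). Since the system is planar, $J(x) \in \R^{2 \times 2}$, and Cross's result asserts that a $2 \times 2$ matrix $A$ is diagonally stable if and only if $-A$ is a P-matrix. Taking $A = J(x)$ and using the P-matrix property from the previous step, I conclude that $J(x)$ is diagonally stable.

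Finally I would reconcile this with the paper's notion of diagonal stability on the stoichiometric subspace. Because $S=\R^2$, the restriction $J(x)|_S$ is just $J(x)$ itself, and ``diagonal stability on $S$'' coincides with the ordinary notion on all of $\R^2$: the positive definite matrix $P$ in Lyapunov's Theorem may be taken to be a positive diagonal matrix, which is exactly what Cross's equivalence supplies. Thus every positive CBE is diagonally stable in its stoichiometric class, for all rate constants, as claimed. The only ``obstacle'' here is bookkeeping---checking that the general-$n$ P-matrix conclusion of Proposition~\ref{pro:Pmatrix} plugs directly into the planar equivalence, and that diagonal stability on the full subspace $S=\R^2$ is the unqualified notion to which Theorem~\ref{thm:old:2} applies. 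No new computation is required.
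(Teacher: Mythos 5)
Your proposal is correct and is essentially identical to the paper's own argument: the paper obtains Theorem~\ref{thm:2} directly by combining Proposition~\ref{pro:Pmatrix} (so $-J(x)$ is a P-matrix for every positive CBE and all rate constants) with Cross's planar equivalence, Theorem~\ref{thm:old:2}. Your closing remark that $S=\R^2$ makes diagonal stability on $S$ coincide with the unqualified notion is the same (implicit) bookkeeping step the paper relies on.
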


\begin{rem}
The conditions of Proposition~\ref{pro:Pmatrix} (and hence of Theorems~\ref{thm:n} and \ref{thm:2})
involve $S$ and $\tilde S$, $Y$ and $\tilde Y$, as well as $I_\Omega$,
the incidence matrix of $(V,\Omega)$,
the graph that is complete within the components of $(V,E)$.
This graph depends only on the (partition of the vertex set $V$ into) components of $(V,E)$,
but not on the edge set $E$. 

Hence, the conclusions hold for all generalized mass-action systems
(with the given stoichiometric and kinetic-order complexes)
that are based on a graph $(V,E')$
with the same (partition of the vertex set $V$ into) components,
but arbitrary edge set $E'$.
\end{rem}


Finally, we illustrate the proof of Proposition~\ref{pro:Pmatrix} in an example.

\begin{exa} \label{exa:Lotka}
We study the generalized Lotka reactions~\cite{lotka:1910},
\begin{alignat*}{5}
\ce{0} &\to \ce{X} && v \sim [\ce{X}]^\al \\[-.4ex]
\ce{X} &\to \ce{Y} & \qquad \text{with kinetics} \qquad & v \sim [\ce{X}] [\ce{Y}]^\be , \\ 
\ce{Y} &\to \ce{0} && v \sim [\ce{Y}]
\end{alignat*}
for $\al,\be\ge0$.
In the classical Lotka-Volterra system~\cite{lotka:1920:a,lotka:1920:b}, $\al=\be=1$.

The dynamical system can be specified as a network with MAK:
\begin{align*}
\al \ce{X} &\to (1+\al) \ce{X} \\
\ce{X} + \be \ce{Y} &\to (1+\be) \ce{Y} \\ 
\ce{Y} &\to \ce{0} 
\end{align*}
For $\al,\be>0$, the network has $6$ complexes,
is not weakly reversible, 
and has $\delta=6-3-2=1$.

\newcommand{\sst}[1]{{\scriptstyle #1}}

However, the dynamical system can also be specified as a network with GMAK:
\[
\begin{tikzcd}[ampersand replacement=\&]
\mbox{\ovalbox{$\begin{array}{c} \ce{0} \\ (\al\ce{X}) \end{array}$}} \arrow[r,"k_{12}"] \& \mbox{\ovalbox{$\begin{array}{c} \ce{X} \\ (\ce{X}+\be\ce{Y}) \end{array}$}} \arrow[ld,xshift=+0ex,"k_{23}"] \\
\mbox{\ovalbox{$\begin{array}{c} \ce{Y} \\ (\ce{Y}) \end{array}$}} \arrow[u,"k_{31}"]
\end{tikzcd}
\]
The network has $3$ complexes ($V=\{1,2,3\}$), is weakly reversible, and has $\delta = 3-1-2=0$, $\tilde \delta = 3-1-2=0$ (generically).
In fact,
\[
Y = \bordermatrix{ & \sst{1} & \sst{2} & \sst{3} \cr
\sst{\ce{X}} & 0 & 1 & 0 \cr 
\sst{\ce{Y}} & 0 & 0 & 1
} ,
\quad
\tilde Y = \begin{pmatrix} \al & 1 & 0 \\ 0 & \be & 1 \end{pmatrix} ,
\]
and hence 
\begin{gather*}
S = \im \begin{pmatrix} 1 & 0 \\ 0 & 1 \end{pmatrix} ,
\quad
\tilde S = \im \begin{pmatrix} 1-\al & -\al \\ \be & 1 \end{pmatrix} ,
\end{gather*}
$\dim S=2$, $\dim \tilde S = 2$ (if $1-\al+\al\be\neq0$).
Further,
\[
I_\Omega = \bordermatrix{ & \sst{12} & \sst{13} & \sst{23} \cr 
\sst{1} & -1 & -1 & 0 \cr 
\sst{2} & 1 & 0 & -1 \cr 
\sst{3} & 0 & 1 & 1 } ,
\]
thereby considering only ordered pairs of vertices,
and hence
\begin{gather*}
Y I_\Omega = \begin{pmatrix} \red{1} & \trueblue{0} & \green{-1} \\ \red{0} & \trueblue{1} & \green{1} \end{pmatrix} ,
\quad
\tilde Y I_\Omega = \begin{pmatrix} \red{1-\al} & \trueblue{-\al} & \green{-1} \\ \red{\be} & \trueblue{1} & \green{1-\be} \end{pmatrix} .
\end{gather*}
We provide a sketch of the hyperplane arrangements (in $\R^2$) given by $Y I_\Omega$ and $\tilde Y I_\Omega$ (for $0<\al,\be<1$).

\begin{tikzpicture}
\begin{axis}[width=0.6\textwidth,height=0.6\textwidth,
    axis lines=middle,xtick=\empty,ytick=\empty,
    xmin=-1.1,xmax=1.1,ymin=-1.1,ymax=1.1,samples=10,
    xlabel={$z_1$},ylabel={$z_2$},title={$Y I_\Omega$}]
    \addplot[name path = A, red, ultra thick, domain=0:1](0,x);
    \addplot[red, ultra thick, domain=-1:0](0,x);
    \addplot[blue, ultra thick, domain=-1:1]{0};
    \addplot[name path = C, green, ultra thick, domain=0:.7]{x};
    \addplot[green, ultra thick, domain=-.7:0]{x};
    \addplot [gray!20, opacity=0.5] fill between [of = A and C, soft clip={}];
    \addplot[mark=*] coordinates {(-.35,.35)};
    \node [left] at (-.35,.35) {$z$};
    \addplot[mark=o] coordinates {(0,.5)};
    \node [left] at (0,.5) {$z'$};
    \node [left] at (0,.75) {$\scriptstyle 1$};\node [right] at (0,.75) {$\scriptstyle 2$};
    \node [above] at (.75,0) {$\scriptstyle 3$};\node [below] at (.75,0) {$\scriptstyle 1$};
    \node [left] at (.575,.6) {$\scriptstyle 3$};\node [right] at (.525,.5) {$\scriptstyle 2$};
\end{axis}
\end{tikzpicture}
\qquad
\begin{tikzpicture}
\begin{axis}[width=0.6\textwidth,height=0.6\textwidth,
    axis lines=middle,xtick=\empty,ytick=\empty,
    xmin=-1.1,xmax=1.1,ymin=-1.1,ymax=1.1,samples=10,
    xlabel={$\tilde z_1$},ylabel={$\tilde z_2$},title={$\tilde Y I_\Omega$}]
    \addplot[name path = A, red, ultra thick, domain=-.7:0]{-x};
    \addplot[red, ultra thick, domain=0:.7]{-x};
    \addplot[blue, ultra thick, domain=-.9:.9]{x/2};
    \addplot[name path = C, green, ultra thick, domain=0:.9](x/2,x);
    \addplot[green, ultra thick, domain=-.9:0](x/2,x);
    \addplot[name path = HL, gray, domain=0:1](-x/1000,.825*x);
    \addplot[name path = HR, gray, domain=0:1](x/1000,.825*x);
    \addplot[gray!20, opacity=0.5] fill between [of = A and HL, soft clip={}];
    \addplot[gray!20, opacity=0.5] fill between [of = C and HR, soft clip={}];
    \addplot[mark=*] coordinates {(-.2,.5)};
    \node [left] at (-.2,.5) {$\tilde z$};
\end{axis}
\end{tikzpicture}

We first consider the (stoichiometric) complexes $Y \in \R^{2 \times V}$.
For $z \in \R^2$, the components of $Y^\trans z \in \R^V$ (that is, the projections of the complexes $y(i)$ for $i \in V$ on $z$) are totally ordered.
The order can be represented (in an equivalent way) by a preorder on $V$ or, alternatively,
by the sign vector $\tau = \sign((Y I_\Omega)^\trans z) \in \{-,0,+\}^\Omega$, that is, by a face of the hyperplane arrangement.
A full dimensional face is called a cell.

For example, the red line/hyperplane is given by its normal vector $y(2)-y(1) = {1 \choose 0} - {0 \choose 0} = {\red{1} \choose \red{0}}$.
To its right, $y(2)^\trans z > y(1)^\trans z$, as indicated by the corresponding vertices 1 and 2.
The red half-line from $0$ to ${0\choose\infty}$ corresponds to $y(1)^\trans z = y(2)^\trans z > y(3)^\trans z$,
represented by the preorder $1 \rightleftarrows 2 \to 3$ or, alternatively, by the sign vector $\tau = (0,+,+)^\trans$.
The gray cell corresponds to $1 \to 2 \to 3$ or, alternatively, to $\tau = (+,+,+)^\trans$.

Analogously, the kinetic-order complexes $\tilde Y \in \R^{2 \times V}$ determine the second hyperplane arrangement.

{\em (Diagonal) stability of CBE.}
Generically, $S = \tilde S = \R^2$, 
and it remains to check
\[
\sign \left( (\tilde Y I_\Omega)^\trans (O) \right) 
\subseteq 
\sign \left( (Y I_\Omega)^\trans (\overline O) \right)^\updownarrow ,
\quad \text{ for all open orthants } O \subset \R^2 . 
\]
We first consider the open upper left orthant $O$,
in particular, we consider $\tilde z \in O$ with $\tilde \tau = \sign((\tilde Y I_\Omega)^\trans \tilde z) = (+,+,+)^\trans$ in the gray cell (of the hyperplane arrangement $\tilde Y I_\Omega$).
However, for all $z \in O$ in the given orthant, $\tau = \sign((Y I_\Omega)^\trans z) = (-,+,+)^\trans$
such that neither $\tau \le \tilde \tau$ nor $\tau \ge \tilde \tau$.
Still, there is 
$z' \in \bd O$ on the boundary of the orthant
with $\tau' = \sign((Y I_\Omega)^\trans z') = (0,+,+)^\trans$ and $\tau' \le \tilde \tau$.
A similar argument applies for all orthants (of $\R^2$) and all faces (of the hyperplane arrangement $\tilde Y I_\Omega$), 
and hence the (unique) positive CBE is diagonally stable,
for all rate constants.

Theorem~\ref{thm:stab_cycle} for cyclic networks
or the explicit analysis in \cite{BorosHofbauerMueller2017} guarantee linear stability even for $0 \le \al,\be \le 1$ except $(\al,\be)=(1,1)$,
the classical Lotka-Volterra system, and $(\al,\be)=(1,0)$. 
Our sufficient conditions differ slightly from the previous equivalent conditions, for two reasons:
First, they guarantee diagonal (not just linear) stability.
Second, and most importantly,
they guarantee stability for all networks with given complexes $Y$ and $\tilde Y$,
that is, for all graphs with three vertices (in one component), 
for example, a reversible chain, a partially reversible cycle (in one or the other direction), or the complete graph.

{\em No other steady states.}
Obviously, $\ker Y I_\Omega = \ker \tilde Y I_\Omega = \im (1,-1,1)^\trans$.
Hence $\im (Y I_\Omega)^\trans = \im (\tilde Y I_\Omega)^\trans$
and further
$\sign(\im (Y I_\Omega)^\trans)=\sign(\im (\tilde Y I_\Omega)^\trans)$,
that is,
the two hyperplane arrangements are isomorphic.
By Theorem~\ref{thm:noother},
every positive steady state is a CBE.
(Of course, the result already follows from $\delta=0$.)
\phantom{} \hfill $\blacksquare$
\end{exa}


\subsection{Stability on the stoichiometric subspace} \label{sec:linear}

In case $S \neq \R^n$ (that is, $1 \le \dim(S)<n$), 
one has to consider the linearization $\dd{v}{t} = J(x) \, v$ on $S$ (that is, for $v \in S$).
However, standard stability results are formulated for matrices with full rank.
We outline two alternative strategies.

\subsubsection*{Strategy I}
One chooses an orthonormal basis for $S$, that is, 
$B \in \R^{n \times s}$ with $s=\dim S$, $S=\im B$, and $B^\trans B = \id$. 
Then, every $v \in S$ can be written as $v = B \, z$ with $z \in \R^s$,
and one considers 
\[
\dd{z}{t} = B^\trans \! J(x) B \, z
\] 
in $\R^s$. 
\blue{The ``reduced Jacobian matrix'' $B^\trans \! J(x) B \, z$ has been used in many studies of injectivity/multistationarity;
for references, see e.g.~\cite[Section~2.2]{BanajiPantea2016}.}

\blue{Now,}
we can \blue{extend} Proposition~\ref{pro:Pmatrix} to $S \neq \R^n$.
Technically, we guarantee that the negative of $B^\trans \! \JJ(x) D B$
is a $P$-matrix, for all $D \in \pd$.
\blue{We note that the \blue{extended} results hold for any orthonormal basis matrix $B$,
however, the crucial sign condition depends on $B$.}
%
%
%

\begin{pro} \label{pro:S}
Let $(G_k,y,\tilde y)$ be a generalized mass-action system with $s = \dim S$,
\[
\sign(S) \cap \sign(\tilde S^\perp) = \{0\} ,
\quad \text{and}, \quad \text{for all open orthants } O \subset \R^s ,
\]
\[
\sign \left( (\tilde Y I_\Omega)^\trans (\Sigma( B(O) )) \right) 
\subseteq 
\sign \left( (Y I_\Omega)^\trans (B(\overline O)) \right)^\updownarrow , 
\]
where $S =\im B$ with an orthonormal basis $B \in \R^{n \times s}$.

Then, for every positive CBE $x$,
the negative of $B^\trans \! J(x) B$ is a $P$-matrix.
\end{pro}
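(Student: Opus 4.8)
The plan is to reduce Proposition~\ref{pro:S} to the full-dimensional case (Proposition~\ref{pro:Pmatrix}) by carefully tracking how the P-matrix characterization of Fact~\ref{fac:fiedler} interacts with the basis change $v = Bz$. Using Fact~\ref{fac:fiedler}, it suffices to show that for every nonzero $z \in \R^s$ there is a diagonal matrix $D' \in \pd$ (now on $\R^s$) such that $-z^\trans D' B^\trans \JJ(x) B z > 0$, where $\JJ(x)$ is the reduced Jacobian; passing from $\JJ$ to $J$ and from the $s$-dimensional to the ambient problem are handled by the same remark as before (multiplication by a positive diagonal matrix does not affect signs of principal minors). First I would rewrite the quadratic form using Decomposition~I: since $\JJ(x) = -Y I_\EE \Ac D_{k,\EE}(x) I_\EE^\trans \tilde Y^\trans$ for any chain graph $(V,\EE)$, we obtain
\[
-z^\trans D' B^\trans \JJ(x) B z = (Y I_\EE)^\trans (B D' z) \cdot \Acc \, (\tilde Y I_\EE)^\trans (B z),
\]
with $\Acc = \Ac D_{k,\EE}(x)$ nonnegative with positive diagonal. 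So the goal becomes finding $z' = B D' z$ and a chain graph such that, writing $a = (Y I_\EE)^\trans z'$ and $\tilde a = (\tilde Y I_\EE)^\trans (Bz)$, we get $a^\trans \Acc\, \tilde a > 0$, exactly as in the proof of Proposition~\ref{pro:Pmatrix}.

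The key structural point is that $\tilde w := Bz \in S$ and $z' = B D' z \in S$, so both vectors live in the stoichiometric subspace rather than filling out an orthant of $\R^n$; this is why the hypothesis is phrased in terms of $B(O)$ and its $\Sigma$-closure. Concretely, for nonzero $z \in \R^s$ lying in an open orthant $O \subset \R^s$, the vector $\tilde w = Bz$ has a well-defined sign vector, and $\tilde\tau := \sign((\tilde Y I_\Omega)^\trans \tilde w)$ is an element of $\sign((\tilde Y I_\Omega)^\trans(\Sigma(B(O))))$. That $\tilde\tau$ is nonzero uses the injectivity hypothesis $\sign(S) \cap \sign(\tilde S^\perp) = \{0\}$: if $(\tilde Y I_\Omega)^\trans \tilde w = 0$ then $\tilde w \in \tilde S^\perp$, and since $\tilde w \in S$ is nonzero this would give a nonzero element of $\sign(S) \cap \sign(\tilde S^\perp)$, a contradiction. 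With $\tilde\tau$ nonzero and in $\mathcal{\tilde T}$, the inclusion hypothesis places $\tilde\tau$ in $\mathcal{T'}^\updownarrow$ where $\mathcal{T'} = \sign((Y I_\Omega)^\trans(B(\overline O)))$, and I would then run the same two-case analysis as in Proposition~\ref{pro:Pmatrix}: case~(i) produces $z' \in B(O)$ (hence $z' = B D' z$ for some $D' \in \pd$) realizing a matching $\tau$, and Lemma~\ref{lem:exist} yields the chain graph with $a, \tilde a \ge 0$ and $a \cdot \tilde a > 0$; case~(ii) produces a boundary witness $z' \in B(\bd O)$ and the same limiting argument as before shows $a_n^\trans \Acc\, \tilde a > 0$ for $z_n = B D'_n z \to z'$ with $n$ large.

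The main obstacle, and the step deserving the most care, is the matching between the diagonal matrix $D' \in \pd$ acting on $\R^s$ and the geometry of $B(O)$. In Proposition~\ref{pro:Pmatrix} the relation $z = D'\tilde z$ for $D' \in \R^{n\times n}_{\mathrm{diag},>}$ meant $z$ and $\tilde z$ share an open orthant of $\R^n$; here the freedom is only an $s$-dimensional positive diagonal scaling of $z$ \emph{before} applying $B$, so the reachable witnesses $z' = B D' z$ trace out the cone $\{B D' z \mid D' \in \pd\}$ rather than a full orthant. This is precisely why the hypothesis compares $(\tilde Y I_\Omega)^\trans$ evaluated on $\Sigma(B(O))$ (the union of orthants met by the $D'$-scalings of $Bz$) against $(Y I_\Omega)^\trans$ evaluated on $B(\overline O)$, and I would verify that the case~(i)/case~(ii) witnesses indeed arise as $B D' z$ (interior) or as a boundary limit $B(\bd O)$ of such scalings, so that Lemma~\ref{lem:exist} applies verbatim with $z$ replaced by $z'=BD'z$ and $\tilde z$ replaced by $Bz$. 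Once this correspondence is pinned down, the nonnegativity of $\Acc$ closes the argument exactly as in the full-dimensional case.
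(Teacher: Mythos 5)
There is a genuine gap, and it sits exactly at the step you dismiss as routine: the passage from $\JJ(x)$ to $J(x)$. In Proposition~\ref{pro:Pmatrix} this passage is indeed handled by the remark that right-multiplication by a positive diagonal matrix does not change the signs of principal minors, because there $-J(x) = -\JJ(x)\diag(x^{-1})$ is the full $n \times n$ product. In the subspace setting, however, the target matrix is
\[
B^\trans J(x)\, B \;=\; B^\trans \JJ(x)\,\diag(x^{-1})\, B ,
\]
and the diagonal factor $\diag(x^{-1})$ is \emph{sandwiched} between $\JJ(x)$ and the $n\times s$ matrix $B$. It cannot be commuted past $B$ (in general $\diag(x^{-1})B \neq B D''$ for any diagonal $D''$, since $\diag(x^{-1})S \neq S$), so it is not a left or right diagonal factor of $B^\trans \JJ(x) B$. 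Consequently, your quadratic form $-z^\trans D' B^\trans \JJ(x) B\, z$, with $\tilde a = (\tilde Y I_\EE)^\trans (Bz)$ carrying no diagonal factor, only establishes that $-B^\trans \JJ(x) B$ is a P-matrix; this does \emph{not} imply that $-B^\trans J(x) B$ is a P-matrix, which is the statement to be proved.

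The paper's proof repairs exactly this: it shows that $-B^\trans \JJ(x)\, D\, B$ is a P-matrix \emph{for all} $D \in \pd$, and then specializes to $D = \diag(x^{-1})$. The quadratic form becomes $a^\trans \Acc\, \tilde a$ with $a = (Y I_\EE)^\trans B z$ but $\tilde a = (\tilde Y I_\EE)^\trans D B \tilde z$, and the vector $D B \tilde z$ ranges over $\Sigma(B(O))$ as $D$ ranges over $\pd$. This — not the $D'$-scalings $B D' z$, which stay inside $B(O)$ — is the actual reason the hypothesis evaluates $(\tilde Y I_\Omega)^\trans$ on $\Sigma(B(O))$; the fact that your argument never needs more than $B(O)$ on the kinetic-order side is the visible symptom of the dropped factor. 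The remainder of your argument (nonzeroness of $\tilde\tau$ from $\sign(S)\cap\sign(\tilde S^\perp)=\{0\}$, the case (i)/(ii) distinction with interior and boundary witnesses, Lemma~\ref{lem:exist}, and the limiting argument) mirrors the paper and goes through once you set $\tilde a = (\tilde Y I_\EE)^\trans D B \tilde z$ and $\tilde\tau = \sign\big((\tilde Y I_\Omega)^\trans D B \tilde z\big) \in \sign\big((\tilde Y I_\Omega)^\trans(\Sigma(B(O)))\big)$, keeping the arbitrary $D \in \pd$ in place throughout.
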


\begin{proof}
We proceed as in the proof of Proposition~\ref{pro:Pmatrix}
and point out the required modifications.

Let $x \in \R^n_>$ be a CBE
and $J(x) \in \R^{n \times n}$ be the corresponding Jacobian matrix.
To show that the negative of $B^\trans \! J(x) B$ is a $P$-matrix,
we recall $J(x) = \JJ(x) \diag(x^{-1})$
and show that the negative of
$
B^\trans \! \JJ(x) D B
$
is a $P$-matrix for all $D \in \pd$.
Indeed,
for every $\tilde z \in \R^s$,
we find a positive diagonal matrix $D' \in \pd$
such that, 
\[
- \tilde z^\trans D' B^\trans \! \JJ(x) D B \, \tilde z > 0 .
\]
Given an auxiliary graph $(V,\EE)$
and using the new decomposition of the graph Laplacian, 
\[
A_k \diag(K_k) = - I_\EE \Ac I_\EE^\trans ,
\]
we again rewrite the scaled Jacobian matrix in the more symmetric form
\begin{align*}
\JJ(x) &= - Y I_\EE \Ac I_\EE^\trans \diag(K_k^{-1}) \diag(x^{\tilde Y}) \, \tilde Y^\trans \\
&= - Y I_\EE \Ac I_\EE^\trans D_V \tilde Y^\trans \\
&= - Y I_\EE \Ac D_{k,\EE}(x) I_\EE^\trans \tilde Y^\trans 
\end{align*}
with $D_{k,\EE}(x) \in \pd$.
Hence, we have
\begin{align*}
- \tilde z^\trans D' B^\trans \! \JJ(x) D B \, \tilde z
&= \tilde z^\trans D' B^\trans Y I_\EE \Ac D_{k,\EE}(x) I_\EE^\trans \tilde Y^\trans D B \, \tilde z \\
&= z^\trans B^\trans (Y I_\EE) \Acc (\tilde Y I_\EE)^\trans D B \, \tilde z \\
&= a^\trans \Acc \, \tilde a ,
\end{align*}
\blue{where} we introduced $z = D' \tilde z$ (in the same open orthant as $\tilde z$) 
and further
\begin{align*}
a &= (Y I_\EE)^\trans B \, z , \\
\tilde a &= (\tilde Y I_\EE)^\trans D B \, \tilde z .
\end{align*}

As stated above, for every nonzero $\tilde z \in \R^n$,
we will find $D' \in \pd$
such that $- \tilde z^\trans D' B^\trans \! \JJ(x) D B \, \tilde z > 0$.
Ultimately,
we will find $z=D'\tilde z$ and a chain graph $(V,\EE)$
such that $a^\trans \Acc \, \tilde a > 0$.

Let $O \subset \R^s$ be an open orthant and $\tilde z \in O$ be nonzero.
By assumption, $\sign(S) \cap \sign(\tilde S^\perp) = \{0\}$, that is, $\Sigma(S) \cap \tilde S^\perp = \{0\}$.
Recall $\ker ( \tilde Y I_\Omega)^\trans = (\im (\tilde Y I_\Omega) )^\perp = \tilde S^\perp$
and $D B \, \tilde z \in \Sigma(S)$.
Hence, $\tilde a = (\tilde Y I_\Omega)^\trans D B \, \tilde z \neq 0$
and further 
$
\tilde \tau = \sign \left( \tilde a \right) \neq 0
$.
Obviously, $\tilde \tau \in \sign \big( (\tilde Y I_\Omega)^\trans (\Sigma(B(O))) \big) =: \mathcal{\tilde T}$.

Analogously,
let $\mathcal{T} := \sign \big( (Y I_\Omega)^\trans (B(O)) \big)$
and $\mathcal{T'} := \sign \big( (Y I_\Omega)^\trans (B(\overline{O})) \big)$,
where $\overline{O}$ is the corresponding closed orthant.
Clearly, $\mathcal{T} \subseteq \mathcal{T'}$.

By assumption, $ \mathcal{\tilde T} \subseteq \mathcal{T'}^\updownarrow$.
The remaining case distinction (to identify $z=D'\tilde z$ and a chain graph $(V,\EE)$) is exactly as in the proof of Proposition~\ref{pro:Pmatrix}.
\end{proof}

As a final result, we can extend Theorem~\ref{thm:n} to the general setting.

\begin{thm} \label{thm:n_S}
Let $(G_k,y,\tilde y)$ be a generalized mass-action system,
where $S = \im B$ with an orthonormal basis $B$.
If the sign conditions in Proposition~\ref{pro:S} hold
and $B^\trans \! J(x) B$ is sign-symmetric, 
then, for all rate constants,
every positive CBE is linearly stable (in its stoichiometric class)
\end{thm}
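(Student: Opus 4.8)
The plan is to reduce the statement to Carlson's theorem (Theorem~\ref{thm:old:n}) applied to the $s \times s$ matrix $B^\trans \! J(x) B$, mirroring exactly how Theorem~\ref{thm:n} is obtained from Proposition~\ref{pro:Pmatrix} in the full-rank case. The only new ingredient beyond that concatenation is the equivalence between stability of the reduced matrix $B^\trans \! J(x) B$ and stability of $J(x)$ on the stoichiometric subspace $S$, which is what the notion of linear stability in a stoichiometric class (recalled in Section~\ref{sec:previous}) actually requires.

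First I would fix a positive CBE $x$ and an orthonormal basis $B$ with $S = \im B$ and $B^\trans B = \id$. By Proposition~\ref{pro:S}, the stated sign conditions guarantee that the negative of $B^\trans \! J(x) B$ is a P-matrix. Combined with the hypothesis that $B^\trans \! J(x) B$ is sign-symmetric, Carlson's theorem (Theorem~\ref{thm:old:n}) then immediately yields that $B^\trans \! J(x) B \in \R^{s \times s}$ is stable, that is, all of its eigenvalues have negative real part.

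Next I would transfer this to stability on $S$. Since $J(x) = Y A_k \diag(x^{\tilde Y}) \, \tilde Y^\trans \diag(x^{-1})$ by Eqn.~\eqref{J} and $\im A_k \subseteq \im I_E$, one has $\im J(x) \subseteq \im(Y A_k) \subseteq \im(Y I_E) = S$; in particular $S$ is invariant under $J(x)$, so the restriction $J(x)|_S \colon S \to S$ is well defined. Because $B$ is orthonormal, the map $z \mapsto B z$ is a linear isomorphism $\R^s \to S$ with inverse $v \mapsto B^\trans v$, and under this isomorphism $J(x)|_S$ is represented precisely by the matrix $B^\trans \! J(x) B$. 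Hence the eigenvalues of $B^\trans \! J(x) B$ coincide with those of $J(x)|_S$, so the stability established in the previous step is exactly stability of $J(x)$ on $S$, i.e.\ linear stability of $x$ in its stoichiometric class. As the argument uses only that $x$ is a positive CBE and holds verbatim for every rate constant $k$, the conclusion follows for all rate constants.

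The main obstacle is this last transfer: one must verify that passing to the reduced matrix $B^\trans \! J(x) B$ neither introduces nor suppresses eigenvalues relevant to stability on $S$. This rests on the two facts just used---that $\im J(x) \subseteq S$, so that $S$ is genuinely $J(x)$-invariant and the spurious zero eigenvalues of the full $n \times n$ matrix $J(x)$ live outside the restriction, and that the orthonormality $B^\trans B = \id$ makes $B^\trans \! J(x) B$ a faithful matrix representation of $J(x)|_S$ rather than a mere compression. Once these are in place, the remainder is a direct application of Proposition~\ref{pro:S} and Theorem~\ref{thm:old:n}, entirely analogous to the $S = \R^n$ situation of Theorem~\ref{thm:n}.
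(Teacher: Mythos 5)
Your proposal is correct and follows essentially the same route the paper intends: Proposition~\ref{pro:S} gives that $-B^\trans \! J(x) B$ is a P-matrix, Carlson's theorem (Theorem~\ref{thm:old:n}) together with the assumed sign-symmetry gives stability of $B^\trans \! J(x) B$, and the Strategy~I coordinate change $v = Bz$ (justified by $\im J(x) \subseteq \im (Y I_E) = S$ and $B^\trans B = \id$) identifies the eigenvalues of $B^\trans \! J(x) B$ with those of $J(x)|_S$, which is exactly the paper's definition of linear stability in the stoichiometric class. Your explicit verification of the $J(x)$-invariance of $S$ and of the faithfulness of the matrix representation is a welcome addition, since the paper leaves this transfer step implicit.
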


\begin{rem}
Again, the conclusions hold for all generalized mass-action systems
(with the given stoichiometric and kinetic-order complexes)
that are based on a graph 
with the same (partition of the vertex set into) components,
but arbitrary edge set.
\end{rem}

\subsubsection*{Strategy II}
\blue{Alternatively,} one considers the original Jacobian matrix
and generalizes standard stability results.

For $A \in \R^{n \times n}$ and $r=\rank  A < n$,
we extend the definition of certain matrix classes.
Recall that $A$ is a $P$-matrix ($P_0$-matrix) if all its principal minors are positive (nonnegative),
and $A$ is a $P_0^+$-matrix if all its principal minors are nonnegative
and at least one of every order is positive.
Now, if $r<n$,
we call $A$ a $P_0^+$-matrix (of order $r$) if all its principal minors are nonnegative
and at least one of every order up to $r$ is positive.

\begin{pro} \label{pro:nec}
Let $A \in \R^{n \times n}$ and $L = \im A$.
If $A$ is $D$-stable on $L$, then $-A$ is a $P_0^+$-matrix (of order $r=\rank A$).
\end{pro}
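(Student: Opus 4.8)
The plan is to read off the signs of the principal minors of $-A$ from the coefficients of the characteristic polynomials of the matrices $AD$, $D \in \pd$, and then to isolate the contribution of each individual minor by letting the diagonal entries of $D$ tend to $0$. Throughout I write $E_k(M) = \sum_{|\alpha|=k} M[\alpha|\alpha]$ for the $k$-th principal-minor sum, so that $\det(\la\,\id - M) = \sum_{k=0}^n (-1)^k E_k(M)\,\la^{n-k}$, and I use the elementary identity that for $D = \diag(d_1,\dots,d_n)$ one has $(AD)[\alpha|\alpha] = \big(\prod_{j\in\alpha} d_j\big)\,A[\alpha|\alpha]$, whence
\[
E_k(AD) = \sum_{|\alpha|=k} \Big(\textstyle\prod_{j\in\alpha} d_j\Big)\, A[\alpha|\alpha] .
\]

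First I would fix $D \in \pd$ and analyze the spectrum of $AD$. Since $D$ is invertible, $\rank(AD) = \rank A = r$ and $\im(AD) = \im A = L$, so $L$ is invariant under $AD$; because $AD$ is stable on $L$, the restriction $(AD)|_L$ has only eigenvalues with negative real part and is in particular invertible, which forces $L \cap \ker(AD) = \{0\}$ and hence $\R^n = L \oplus \ker(AD)$ with both summands invariant. In an adapted basis $AD$ is block-diagonal with blocks $(AD)|_L$ and $0$, so
\[
\det(\la\,\id - AD) = \la^{\,n-r}\, \chi(\la),
\]
where $\chi$ is the degree-$r$ characteristic polynomial of $(AD)|_L$ and is Hurwitz-stable. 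By the classical factorization of such a polynomial into linear factors $\la + a$ ($a>0$) and quadratic factors $\la^2 + b\la + c$ ($b,c>0$), all coefficients of $\chi$ are positive; comparing with the expansion of $\det(\la\,\id - AD)$ gives $E_k(AD) = 0$ for $k > r$ and $(-1)^k E_k(AD) > 0$ for $1 \le k \le r$. Rewriting $(-1)^k A[\alpha|\alpha] = (-A)[\alpha|\alpha]$, this says precisely that the multilinear form
\[
F_k(d) = \sum_{|\alpha|=k} \Big(\textstyle\prod_{j\in\alpha} d_j\Big)\, (-A)[\alpha|\alpha]
\]
is strictly positive on $\R^n_>$, for every $1 \le k \le r$.

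Finally I would extract the individual minors. Fixing $\alpha_0$ with $|\alpha_0|=k$ and setting $d_j = 1$ for $j \in \alpha_0$ and $d_j = t$ for $j \notin \alpha_0$, every term with $\alpha \neq \alpha_0$ carries a factor $t^{|\alpha \setminus \alpha_0|}$ with $|\alpha \setminus \alpha_0| \ge 1$, so $F_k(d) \to (-A)[\alpha_0|\alpha_0]$ as $t \to 0^+$; positivity of $F_k$ on the open orthant then yields $(-A)[\alpha_0|\alpha_0] \ge 0$. Taking instead $d = (1,\dots,1)$ gives $F_k(d) = E_k(-A) > 0$, so at least one principal minor of order $k$ is strictly positive, for each $1 \le k \le r$. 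Principal minors of order $> r$ vanish because $\rank(-A) = r$, and are therefore nonnegative as well. Hence $-A$ is a $P_0^+$-matrix of order $r$. I expect the spectral bookkeeping of the middle step to be the main obstacle: one must justify that $L$ is a genuine invariant complement of $\ker(AD)$, so that the factor $\la^{n-r}$ accounts exactly for the zero eigenvalue and $\chi$ collects precisely the stable eigenvalues — this is where $D$-stability (rather than mere nonsingularity) is essential. Once the factorization $\det(\la\,\id - AD) = \la^{\,n-r}\chi(\la)$ with $\chi$ Hurwitz is in place, the positivity and limiting arguments are routine.
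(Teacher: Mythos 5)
Your proof is correct and follows essentially the same route as the paper's: factor $\det(\la \id - AD)$ as $\la^{n-r}$ times a Hurwitz polynomial whose coefficients are the multilinear forms $F_k(d)$ in the diagonal entries of $D$, invoke positivity of Hurwitz coefficients, and then vary $d$ over the positive orthant to extract nonnegativity of each principal minor and positivity of at least one per order. You merely make explicit some steps the paper leaves implicit, namely the invariant splitting $\R^n = L \oplus \ker(AD)$ justifying the factorization, the $t \to 0^+$ limit isolating individual minors, and the vanishing of minors of order greater than $r$.
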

\begin{proof}
For $D \in \pd$, the characteristic polynomial of $AD$ is given by
\[
\det(\la \id- AD) = \la^{n-r} (\la^r + c_{1} \la^{r-1} + \ldots + c_{r}) ,
\]
where
\[
c_{k} = \sum_{i_1, i_2, \ldots, i_k} M^k_{i_1, i_2, \ldots, i_k} \, d_{i_1} d_{i_2} \cdots d_{i_k} ,
\]
$M^k_{i_1, i_2, \ldots, i_k}$ are the principal minors of order $k$ of $-A$, 
and $d_i$ are the (positive) diagonal entries of $D$.

If $AD$ is stable on $L$, then it has 
$r$ eigenvalues with negative real part
and $n-r$ zero eigenvalues.
By the Routh-Hurwitz Theorem,
$c_k>0$ for all $k=1,\ldots,r$.
If $A$ is $D$-stable on $L$,
this holds for all~$d_i$.
Hence, for all $k=1,\ldots,r$, we have
$M^k_{i_1, i_2, \ldots, i_k} \ge 0$ for all tuples $(i_1, i_2, \ldots, i_k)$
and $M^k_{i_1, i_2, \ldots, i_k} > 0$ for some $(i_1, i_2, \ldots, i_k)$.
\end{proof}

\begin{thm} \label{thm:ballantine}
Let $A \in \R^{n \times n}$ have positive leading principal minors up to order $r=\rank A$.
Then, there is a diagonal matrix $D \in \pd$ such that $DA$ has 
$r$ simple eigenvalues with positive real part
and $n-r$ zero eigenvalues.
\end{thm}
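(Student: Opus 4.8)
The plan is to exhibit $D$ explicitly as a one‑parameter family of widely separated scales and to read the eigenvalues of $DA$ off the asymptotics of its characteristic polynomial. Concretely, I would take $D=D(\epsilon)=\diag(\epsilon,\epsilon^2,\dots,\epsilon^n)\in\pd$ and let $\epsilon\to0^+$. Writing $e_k(DA)$ for the sum of the $k\times k$ principal minors of $DA$ and $\Delta_k$ for the $k$-th leading principal minor of $A$, the key identity is $e_k(DA)=\sum_{|\alpha|=k}\big(\prod_{i\in\alpha}\epsilon^{\,i}\big)\,A[\alpha|\alpha]$, where $A[\alpha|\alpha]$ is the principal minor on the index set $\alpha$. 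Since $\rank A=r$, every principal minor of order $k>r$ vanishes, so $e_k(DA)=0$ for $k>r$ and the characteristic polynomial factors as $\det(\lambda\id-DA)=\lambda^{\,n-r}\,p(\lambda)$ with $p(\lambda)=\lambda^r-e_1\lambda^{r-1}+\cdots+(-1)^r e_r$. This already accounts for the $n-r$ zero eigenvalues; it remains to locate the $r$ roots of $p$.

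For the asymptotics, among all $\alpha$ with $|\alpha|=k$ the exponent $\sum_{i\in\alpha}i$ is minimized uniquely by $\alpha=\{1,\dots,k\}$, whose minor is $\Delta_k>0$ for $k\le r$ by hypothesis; hence $e_k(DA)=\Delta_k\,\epsilon^{k(k+1)/2}\big(1+O(\epsilon)\big)$. Setting $c_k=\Delta_k/\Delta_{k-1}>0$ (with $\Delta_0=1$), I expect the $r$ nonzero eigenvalues to live at the distinct scales $\epsilon,\epsilon^2,\dots,\epsilon^r$, with $\lambda\approx c_k\,\epsilon^k$. To make this precise, for each fixed $k$ I would substitute $\lambda=\epsilon^k\xi$ into $p$, divide by the dominant power $\epsilon^{m_k}$ with $m_k=k(k+1)/2+k(r-k)$, and let $\epsilon\to0$. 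Only the $j=k-1$ and $j=k$ terms survive, yielding the limit polynomial $(-1)^{k-1}\xi^{\,r-k}\big(\Delta_{k-1}\,\xi-\Delta_k\big)$, which has a simple root at $\xi=c_k>0$ (the remaining roots sitting at $\xi=0$).

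By Hurwitz's theorem (equivalently, Rouch\'e on a small disk around $c_k$ bounded away from $0$), this simple zero persists: for small $\epsilon$ the rescaled polynomial has exactly one root $\xi_k(\epsilon)\to c_k$, and it is real because the coefficients are real and complex roots occur in conjugate pairs. Thus, for each $k=1,\dots,r$, the polynomial $p$ has a real positive root $\lambda_k(\epsilon)=\epsilon^k\xi_k(\epsilon)\approx c_k\,\epsilon^k$. Since these lie at distinct orders of magnitude in $\epsilon$, they are pairwise distinct, hence simple; and since $\deg p=r$ they exhaust the roots of $p$. Consequently $DA$ has $r$ simple eigenvalues with positive real part (indeed real and positive) together with $n-r$ zero eigenvalues, as required.

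The \emph{main obstacle} is exactly this root‑locating step: upgrading the coefficient asymptotics $e_k\sim\Delta_k\epsilon^{k(k+1)/2}$ into the statement that the roots are real, simple, and of the predicted sizes, with no interference between the several scales. The rescaling‑plus‑Hurwitz argument handles a single scale cleanly, and the non‑interference is precisely the content of the Newton polygon of $p$ in the variable $\epsilon$: its vertices $(k,\,k(k+1)/2)$ are in strictly convex position, so the lower hull consists of $r$ segments of unit horizontal length with slopes $1,2,\dots,r$, giving exactly one root of $\epsilon$-valuation $k$ for each $k$. As an alternative that avoids Newton polygons, one may instead invoke Ballantine's full‑rank theorem applied to the invertible leading block $A_{11}$ (whose leading principal minors are $\Delta_1,\dots,\Delta_r>0$) to obtain $D_1$ with $D_1A_{11}$ positively diagonalized, and then take $D=\diag(D_1,\,sD_2)$ with $s\to0^+$, using continuity of eigenvalues to split the spectrum into the $r$ eigenvalues near those of $D_1A_{11}$ and $n-r$ eigenvalues that are forced to be exactly $0$ by $\rank(DA)=r$.
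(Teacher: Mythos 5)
Your proposal is correct, and your primary argument takes a genuinely different route from the paper's; interestingly, the ``alternative'' you sketch at the end is essentially the paper's actual proof. The paper applies the Fisher--Fuller/Ballantine theorem to the leading $r \times r$ block $A_1$ (which has positive leading principal minors) to obtain $D_1 \in \pd$ with $D_1 A_1$ having $r$ positive simple eigenvalues, then sets $D = \diag(D_1, d\,\id)$: at $d=0$ the spectrum of $DA$ is that of $D_1 A_1$ together with $n-r$ zeros, and for small $d>0$ continuity of eigenvalues plus $\rank(DA)=r$ forces $r$ simple eigenvalues with positive real part and exactly $n-r$ zero eigenvalues. Your main argument instead uses the explicit scaling $D(\epsilon)=\diag(\epsilon,\ldots,\epsilon^n)$ and works directly on the characteristic polynomial: the identity $e_k(DA)=\sum_{|\alpha|=k}\epsilon^{\sum_{i\in\alpha}i}A[\alpha|\alpha]$, the vanishing of all principal minors of order above the rank, the dominance $e_k \sim \Delta_k\,\epsilon^{k(k+1)/2}$ for $k \le r$, and the rescaling $\lambda=\epsilon^k\xi$ plus Hurwitz to isolate one real root near $\Delta_k/\Delta_{k-1}$ at each scale; your bookkeeping (the minimal exponent $m_k$ attained exactly at $j=k-1,k$, distinctness of roots across scales, and the degree count showing the roots are exhausted) is sound. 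What your route buys: it is self-contained (it does not use Fisher--Fuller/Ballantine as a black box, and in effect reproves it in the rank-deficient setting) and it gives a stronger conclusion --- the $r$ nonzero eigenvalues are real, positive, and simple, with explicit asymptotic locations $(\Delta_k/\Delta_{k-1})\epsilon^k$. What the paper's route buys is brevity: two cited theorems plus a one-line block perturbation, at the cost of concluding only ``positive real part'' and of outsourcing the full-rank case.
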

\begin{proof}
Let $A_1 \in \R^{r \times r}$ be the leading principal submatrix of $A$ of order $r$
(having positive leading principal minors).
By \cite[Theorem~1]{FisherFuller1958} or \cite[Theorem~1]{Ballantine1970},
there is a diagonal matrix $D_1 \in \pd$ such that $D_1A_1$ has
$r$ positive, simple eigenvalues.
Let $A$ and a corresponding diagonal matrix $D \in \pd$ have the partitions
\[
A = \begin{pmatrix} A_1 & A_2 \\ A_3 & A_4 \end{pmatrix} , \,
D = \begin{pmatrix} D_1 & 0 \\ 0 & d \, \id \end{pmatrix} ,
\]
where $A_4, \id \in \R^{(n-r) \times (n-r)}$, $\id$ is the identity, and $d>0$.
Now, let
\[
M(d) = D A = \begin{pmatrix} D_1 A_1 & D_1 A_2 \\ d \, A_3 & d \, A_4 \end{pmatrix} .
\]
In particular,
$M(0)$ has $r$ positive, simple eigenvalues (of $D_1 A_1$)
and $n-r$ zero eigenvalues.
For $d>0$ small enough,
$M(d)$ has $r$ simple eigenvalues with positive real part
(perturbations of the eigenvalues of $D_1 A_2$)
and $n-r$ zero eigenvalues (corresponding to $\ker A$).
\end{proof}

In terms of stability (on $L=\im A$), Theorem~\ref{thm:ballantine} takes the following form.
\begin{thm} \label{thm:ballantine2}
Let $A \in \R^{n \times n}$.
If $-A$ has positive leading principal minors up to order $r=\rank A$,
then there is a diagonal matrix $D \in \pd$ such that $DA$ is stable on $L=\im A$.
\end{thm}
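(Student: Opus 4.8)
The plan is to obtain Theorem~\ref{thm:ballantine2} as a direct reformulation of Theorem~\ref{thm:ballantine}, applied to $-A$ rather than to $A$. First I would note that $\rank(-A)=\rank A=r$ and that, by hypothesis, $-A$ has positive leading principal minors up to order $r$. Hence Theorem~\ref{thm:ballantine} produces a diagonal matrix $D\in\pd$ for which $D(-A)=-DA$ has $r$ simple eigenvalues with positive real part and $n-r$ zero eigenvalues. Equivalently, $DA$ and, having the same characteristic polynomial, $AD$, have exactly $r$ eigenvalues with negative real part and $n-r$ zero eigenvalues.

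The second step is to control the zero eigenvalue so that it cannot interfere with stability on $L$. Here I would work with the operator $AD$, for which $\im(AD)=\im A=L$ (this is the operator appearing in the definition of $D$-stability, and it shares its spectrum with $DA$). Invertibility of $D$ gives $\dim\ker(AD)=\dim\ker A=n-r$. Since $AD$ has exactly $r$ nonzero eigenvalues, the algebraic multiplicity of the eigenvalue $0$ equals $n-r$, which matches its geometric multiplicity $\dim\ker(AD)=n-r$. Therefore the eigenvalue $0$ is semisimple, and $\R^n=\ker(AD)\oplus\im(AD)=\ker(AD)\oplus L$ is a decomposition into $AD$-invariant subspaces.

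Finally I would conclude. Because $\im(AD)=L$, the subspace $L$ is $AD$-invariant, and the restriction $(AD)|_L\colon L\to L$ carries precisely the $r$ nonzero eigenvalues of $AD$, all of which have negative real part. By definition this is exactly the assertion that $AD$ is stable on $L=\im A$, as required.

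The main obstacle is the semisimplicity of the zero eigenvalue: without it the kernel and image would overlap, $\R^n$ would fail to split as $\ker\oplus\im$, and \emph{stability on $L$} would be ill-posed, since a nontrivial Jordan block at $0$ would place a vector of $\ker(AD)$ inside $L=\im(AD)$ and thus a zero eigenvalue in the restriction $(AD)|_L$. This is precisely where the quantitative output of Theorem~\ref{thm:ballantine}—that the perturbed matrix has exactly $r$ nonzero eigenvalues and $n-r$ zero eigenvalues, with $r=\rank A$—is indispensable, as it forces the algebraic and geometric multiplicities of $0$ to coincide.
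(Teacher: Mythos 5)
Your proof is correct and takes essentially the same approach as the paper: there, Theorem~\ref{thm:ballantine2} is stated as an immediate reformulation of Theorem~\ref{thm:ballantine} (applied to $-A$), with no further argument given. The extra care you take---passing to $AD$, whose image actually equals $L=\im A$, and verifying that the zero eigenvalue is semisimple so that $(AD)|_L$ carries exactly the $r$ eigenvalues with negative real part---supplies precisely the details the paper leaves implicit.
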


The additional assumption of sign-symmetry 
(and the nonnegativity of principal minors of order equal to the rank)
ensures stability.
\begin{thm} \label{thm:carlson}
Let $A \in \R^{n \times n}$ be sign-symmetric.
Further, let $-A$ have positive leading principal minors up to order $r=\rank A$
and nonnegative principal minors of order $r$.
Then, $A$ is $D$-stable on $L=\im A$.
\end{thm}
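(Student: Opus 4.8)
The plan is to upgrade the \emph{existence} of a single stabilizing diagonal factor (Theorem~\ref{thm:ballantine2}) to full $D$-stability by a continuity argument over the positive diagonal matrices, using sign-symmetry only to keep nonzero eigenvalues off the imaginary axis. Write $L=\im A$ and $r=\rank A$. First I would observe that all hypotheses are invariant under $A\mapsto AD$ for $D\in\pd$: multiplication by a positive diagonal scales every minor $A[\al|\be]$ by $\prod_{j\in\be}d_j>0$, hence preserves sign-symmetry (the products $A[\al|\be]\,A[\be|\al]$ only acquire a positive factor), the positivity of the leading principal minors of $-A$ up to order $r$, the nonnegativity of the order-$r$ minors, and the rank. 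Since $\im(AD)=\im A=L$, it suffices to show $AD$ is stable on $L$ for every $D$. I would obtain this by joining an arbitrary $D_1\in\pd$ to a base point $D_0$ along the segment $D(t)=(1-t)D_0+tD_1\in\pd$ (convexity keeps us in $\pd$), where $D_0$ is furnished by Theorem~\ref{thm:ballantine2} (note $AD_0$ and $D_0A$ are similar, so they share their $r$ nonzero eigenvalues, and stability on the respective images is equivalent).

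Along this path the characteristic polynomial of $AD(t)$ factors as $\la^{n-r}\,p_t(\la)$ with $p_t(\la)=\la^r+c_1(t)\la^{r-1}+\dots+c_r(t)$, exactly as in the proof of Proposition~\ref{pro:nec}, where $c_k(t)=\sum_{|\al|=k}(-A)[\al]\prod_{i\in\al}d_i(t)$. The $n-r$ zero eigenvalues persist (the rank is constant), while the $r$ nonzero eigenvalues are the roots of $p_t$; these move continuously in $t$ and lie in the open left half-plane at $t=0$. It remains to forbid a root from crossing the imaginary axis. A crossing at the origin is ruled out at once: since the leading order-$r$ minor of $-A$ is positive and the other order-$r$ minors are nonnegative, $c_r(t)\ge(-A)[\{1,\dots,r\}]\prod_{i\le r}d_i(t)>0$, so $p_t(0)\neq 0$ for all $t$.

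The main obstacle is excluding a crossing at a nonzero point $\pm\mathrm{i}\omega$, and this is precisely where sign-symmetry enters. Equivalently I must establish the crux statement: a sign-symmetric matrix $B$ whose negative has positive leading principal minors up to order $r=\rank B$ and nonnegative order-$r$ minors has no nonzero purely imaginary eigenvalue (applied to $B=AD(t)$, which satisfies the same hypotheses by the invariance above). I expect this to be the hard part, and I would \emph{not} pursue it by the naive route of perturbing $B$ to full rank and invoking Carlson's Theorem~\ref{thm:old:n}: although one can complete the rank cleanly by subtracting $\epsilon$ from the trailing $n-r$ diagonal entries — which turns each leading principal minor of order $r+j$ into exactly $\epsilon^{\,j}\,(-A)[\{1,\dots,r\}]>0$ — a diagonal perturbation does not preserve sign-symmetry, and a limiting argument would in any case only place the nonzero eigenvalues in the \emph{closed} left half-plane. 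Instead I would attack the crux via the second additive compound $B^{[2]}$: a conjugate pair summing to zero forces $B^{[2]}$ to be singular, while its entries are the $2\times 2$ minors of $B$, whose signs are controlled by sign-symmetry together with the $P$-minor conditions. Showing that this forced sign pattern keeps $B^{[2]}$ nonsingular on the relevant subspace is the technical heart; once the crux is secured, the continuity argument closes, yielding $AD_1$ stable on $L$ for arbitrary $D_1$, and hence the $D$-stability of $A$ on $L$.
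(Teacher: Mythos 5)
Your overall strategy coincides with the paper's proof step for step: reduce $D$-stability to stability of each $AD$ via the invariance of all hypotheses under right multiplication by a positive diagonal matrix, take a stable base point from Theorem~\ref{thm:ballantine2}, factor the characteristic polynomial along the homotopy as $\la^{n-r}p_t(\la)$ with $c_r(t)>0$ (so no root can reach the origin, and the zero eigenvalue stays semisimple of multiplicity $n-r$), and use sign-symmetry to forbid crossings of the imaginary axis away from the origin. Up to the last point your argument is correct and is essentially the paper's.

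The genuine gap is the crux you explicitly leave open: that a sign-symmetric matrix has no nonzero purely imaginary eigenvalue. The paper does not prove this from scratch either; it invokes it as a known result (Lemma~2.2 of \cite{HershkowitzKeller2003}), and note that sign-symmetry alone suffices, without your additional minor hypotheses. More importantly, the attack you sketch would fail as written, because it conflates two different compound matrices: the second \emph{additive} compound $B^{[2]}$ does have eigenvalues $\la_i+\la_j$ and is therefore singular when $\pm\mathrm{i}\omega$ are eigenvalues, but its entries are single entries of $B$ (and sums of diagonal entries), not $2\times 2$ minors; the matrix whose entries are the $2\times 2$ minors is the \emph{multiplicative} compound, whose eigenvalues are the products $\la_i\la_j$, and a conjugate pair does not force it to be singular (indeed $(\mathrm{i}\omega)(-\mathrm{i}\omega)=\omega^2>0$). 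So sign-symmetry does not control the entries of the matrix that is actually singular. The gap can be closed cleanly and without citation: by the Cauchy--Binet formula, every principal minor of $B^2$ satisfies $(B^2)[\al|\al]=\sum_{\be}B[\al|\be]\,B[\be|\al]\ge 0$ by sign-symmetry, so $B^2$ is a $P_0$-matrix; a $P_0$-matrix has no negative real eigenvalue, since $\det(B^2+t\,\id)=\sum_{k}t^{\,n-k}\sum_{|\al|=k}(B^2)[\al|\al]\ge t^n>0$ for all $t>0$; but a nonzero purely imaginary eigenvalue $\mathrm{i}\omega$ of $B$ would give the negative real eigenvalue $-\omega^2$ of $B^2$, a contradiction. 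With this lemma in place, your continuity argument closes exactly as in the paper.
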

\begin{proof}
First, we show that $A$ is stable on $L$.

Since $A$ is sign-symmetric, it has no nonzero eigenvalues on the imaginary axes,
by~\cite[Lemma~2.2]{HershkowitzKeller2003}.
Since $-A$ has positive leading principal minors up to order~$r$,
there is a diagonal matrix $D^* \in \pd$ such that $D^*A$ is stable on $L=\im A$,
by~Theorem~\ref{thm:ballantine2}.
Now, for $D \in \pd$,
the characteristic polynomial of $DA$ is given by
\[
\det(\la \id- DA) = \la^{n-r} (\la^r + c_{1} \la^{r-1} + \ldots + c_{r}) ,
\]
where
\[
c_{k} = \sum_{i_1, i_2, \ldots, i_k} M^k_{i_1, i_2, \ldots, i_k} \, d_{i_1} d_{i_2} \cdots d_{i_k} ,
\]
$M^k_{i_1, i_2, \ldots, i_k}$ are the principal minors of order $k$ of $-A$, 
and $d_i$ are the (positive) diagonal entries of $D$.
Since $-A$ has nonnegative principal minors of order~$r$ and $M^r_{1,2,\ldots,r}>0$,
we have $c_r>0$, and $DA$ has exactly $n-r$ zero eigenvalues. 

Finally, consider the homotopy $D_t = (1-t)D^*+t\,\id$ for $t\in[0,1]$ with $D_0= D^*$ and $D_1=\id$.
Since $D_t \in \pd$, every matrix $D_t A$ is sign-symmetric
and hence has no nonzero eigenvalues on the imaginary axes.
In fact, as just shown, it has $n-r$ zero eigenvalues.
By continuity, since $D^*A$ is stable on $L$, also $A$ is stable on $L$.
(Otherwise, some $D_t A$ has an additional zero eigenvalue or a conjugate pair of eigenvalues on the imaginary axis.)

Since, for every $D \in \pd$, $AD$ is sign-symmetric, and minors of $A$ and $AD$ have the same signs,
also $AD$ is stable on $L$.
\end{proof}

\blue{
Ultimately, we use Theorem~\ref{thm:carlson} to ensure the $D$-stability of the scaled Jacobian matrix $\JJ$
and hence the stability of the original Jacobian matrix $J$.

To summarize, 
in case of a proper stoichiometric subspace ${S \subset \R^n}$,
the linear stability of every CBE (in its stoichiometric class) can be guaranteed using two alternative strategies:
either (I) by showing that the negative of the {\em reduced} Jacobian matrix is a $P$-matrix (and sign symmetric),
see Theorem~\ref{thm:n_S},
or (II) by showing that the negative of the {\em scaled} Jacobian matrix has a P-matrix of order $s=\dim S$ as a principal submatrix
and nonnegative principal minors of order $s$ (and is sign symmetric), see Theorem~\ref{thm:carlson}.
}




Finally, we apply Theorem~\ref{thm:carlson} to two examples from the biochemical literature
\blue{and discuss the applicability of Theorem~\ref{thm:n_S}.}
%

\begin{exa} We study a two-component signaling system, 
consisting of a histidine kinase $\ce{X}$ that auto-phosphorylates and transfers the phosphate group to a response regulator $\ce{Y}$ 
that undergoes auto-dephosphorylation, cf.~\cite{ConradiFeliuMinchevaWiuf2017}.

Traditionally, the dynamical system is specified as a network with MAK:
\begin{align*}
\ce{X} &\to \ce{X}_p \\ \ce{X}_p + \ce{Y} & \rightleftarrows \ce{X} + \ce{Y}_p \\ \ce{Y}_p & \to \ce{Y}
\end{align*}
The network has 6 complexes, is not weakly reversible, and has deficiency $\delta=6-3-2=1$.

However, it can equivalently be specified as a network with GMAK:
\[
\begin{tikzcd}[ampersand replacement=\&]
\mbox{\ovalbox{$\begin{array}{c} \ce{X} + \ce{Y} \\ (\ce{X}) \end{array}$}} \arrow[r,"k_{12}"] \& \mbox{\ovalbox{$\begin{array}{c} \ce{X}_p + \ce{Y} \\ (\ce{X}_p+\ce{Y}) \end{array}$}} \arrow[d,xshift=+0.5ex,"k_{23}"] \\
\mbox{\ovalbox{$\begin{array}{c} \ce{X} + \ce{Y}_p \\ (\ce{Y}_p) \end{array}$}} \arrow[u,"k_{41}"] \& \mbox{\ovalbox{$\begin{array}{c} \ce{X} + \ce{Y}_p \\ (\ce{X}+\ce{Y}_p)\end{array}$}} \arrow[u,xshift=-0.5ex,"k_{32}"] \arrow[l,"k_{34}"]
\end{tikzcd}
\]
This network has 4 \blue{vertices}, 
is weakly reversible, and has $\delta = 4-1-2=1$, 
$\tilde \delta = 4-1-3=0$ (since $\dim S=2$, $\dim \tilde S = 3$).
Hence, there exist CBE for all rate constants.
\blue{Further, the network has effective deficiency $\delta'=3-1-2=0$
(4~vertices, but only 3~different stoichiometric complexes).
Hence, there are no other steady states.
Finally,}
note that reaction $3 \to 4$ does not contribute to the dynamics, but makes the network weakly reversible.

Let the chemical species be ordered as $(\ce{X},\ce{X}_p,\ce{Y},\ce{Y}_p)$. Then, 
\[
Y = 
\begin{pmatrix}
 1 & 0 & 1 & 1 \\
 0 & 1 & 0 & 0 \\
 1 & 1 & 0 & 0 \\
 0 & 0 & 1 & 1
\end{pmatrix} ,
\quad 
\tilde Y =
\begin{pmatrix}
 1 & 0 & 1 & 0 \\
 0 & 1 & 0 & 0 \\
 0 & 1 & 0 & 0 \\
 0 & 0 & 1 & 1
\end{pmatrix} ,
\quad \text{and} \quad
S = \im \begin{pmatrix} -1 & 0 \\ 1 & 0 \\ 0 & -1 \\ 0 & 1 \end{pmatrix} .
\]
%
Further, let $\la_\x, \la_\xx>0$ be the parameters of the full cycle
$C_\x = \{1 \to 2 \to 3 \to 4 \to 1\}$ and the two-cycle $C_\xx = \{2 \rightleftarrows 3\}$, respectively, and let
\[
A_\x = 
\begin{pmatrix}
 -1 & 0 & 0 & 1 \\
 1 & -1 & 0 & 0 \\
 0 & 1 & -1 & 0 \\
 0 & 0 & 1 & -1 
\end{pmatrix} ,
\quad
A_\xx = 
\begin{pmatrix}
 0 & 0 & 0 & 0 \\
 0 & -1 & 1 & 0 \\
 0 & 1 & -1 & 0 \\
 0 & 0 & 0 & 0 
\end{pmatrix} 
\]
be the corresponding Laplacian matrices.
Using cycle decomposition~\eqref{eq:cycle},
\begin{align*}
- \JJ &= - Y (\la_\x A_\x + \la_\xx A_\xx) \, \tilde Y^\trans \\
&= \begin{pmatrix}
 \la_\x+\la_\xx & -\la_\x-\la_\xx & -\la_\x-\la_\xx & \la_\xx \\
 -\la_\x-\la_\xx & \la_\x+\la_\xx & \la_\x+\la_\xx & -\la_\xx \\
 -\la_\xx & \la_\x+\la_\xx & \la_\x+\la_\xx & -\la_\x-\la_\xx \\
 \la_\xx & -\la_\x-\la_\xx & -\la_\x-\la_\xx & \la_\x+\la_\xx 
\end{pmatrix} .
\end{align*}
\normalsize
In particular, $\im \JJ = S$ and $-\JJ$ is a $P_0^+$-matrix, both necessary conditions for stability.
As sufficient conditions (via Theorem~\ref{thm:carlson}),
we check that $-\JJ$ is sign-symmetric,
has a $P$-matrix of order $r=\rank \JJ = \dim S = 2$ as a principal submatrix,
and only nonnegative principal minors of order 2.
Indeed, using a computer algebra system, we find that all conditions are fulfilled.
For example, the principal submatrix corresponding to the species $(\ce{X},\ce{Y})$ is a $P$-matrix.

\newcommand{\sst}[1]{{\scriptstyle #1}}

\blue{As it turns out, 
also} the conditions of Theorem~\ref{thm:n_S} hold,
in particular, the sign conditions in Proposition~\ref{pro:S}.
\blue{
To show this, we use

\vspace{-2ex}
\footnotesize
\[
S = \im B
\quad \text{with} \quad
B = \begin{pmatrix} -1 & 0 \\ 1 & 0 \\ 0 & -1 \\ 0 & 1 \end{pmatrix} ,
\quad
\tilde S^\perp = \im \begin{pmatrix} 0 \\ -1 \\ 1  \\ 0 \end{pmatrix} ,
\]
\[
I_\Omega = \bordermatrix{ & \sst{12} & \sst{13} & \sst{14} & \sst{23} & \sst{24} & \sst{34} \cr 
\sst{1} & -1 & -1 & -1 & 0 & 0 & 0 \cr 
\sst{2} & 1 & 0 & 0 & -1 & -1 & 0 \cr 
\sst{3} & 0 & 1 & 0 & 1 & 0 & -1 \cr 
\sst{4} & 0 & 0 & 1 & 0 & 1 & 1
} , \quad \text{and}
\]
\[
Y I_\Omega = 
\begin{pmatrix} 
-1 & 0 & 0 & 1 & 1 & 0 \\
-1 & 0 & 0 & 1 & 1 & 0 \\
0 & -1 & -1 & -1 & -1 & 0 \\
0 & 1 & 1 & 1 & 1 & 0
\end{pmatrix}
, \quad
\tilde Y I_\Omega = 
\begin{pmatrix} 
-1 & 0 & 0 & 1 & 1 & 0 \\
-1 & 0 & 0 & 1 & 1 & 0 \\
0 & -1 & -1 & -1 & -1 & 0 \\
0 & 1 & 1 & 1 & 1 & 0
\end{pmatrix}
.
\]
\normalsize
Clearly, $\sign(S) \cap \sign(\tilde S^\perp)=\{0\}$ in Proposition~\ref{pro:S} is fulfilled,
and it remains to check

\vspace{-2ex}
\footnotesize
\[
\sign \left( (\tilde Y I_\Omega)^\trans (\Sigma( B(O) )) \right) 
\subseteq 
\sign \left( (Y I_\Omega)^\trans (B(\overline O)) \right)^\updownarrow , 
\quad \text{ for all open orthants } O \subset \R^2 . 
\]

\normalsize
For every orthant $O$, we determine the orthants $\Sigma( B(O) )$ that $B(O)$ intersects
and the sign vectors in $\sign ( \im (\tilde Y I_\Omega)^\trans )$ they generate (via linear programs).
For every such sign vector $\tilde \tau$,
we check if the set $B(\overline O)$ generates a ``corresponding'' sign vector $\tau$ (with $\tau\neq0$ and $\tilde\tau\le\tau$ or $\tilde\tau\ge\tau$)
in $\sign ( \im (Y I_\Omega)^\trans )$.
Indeed, the condition is fulfilled.

Recall that Theorem~\ref{thm:n_S} guarantees} 
the stability of {\em all} generalized mass-action systems with the same (stoichiometric and kinetic-order) complexes, 
but different \blue{(weakly reversible)} graphs.
\end{exa}

%
%

\begin{exa} We study a futile cycle involving two enzymes, substrate, product, and two enzyme-complexes,
cf.~\cite{Angeli2008} for a \blue{global} stability result and \cite{MuellerFlammStadler2022} for the chemistry of futile cycles.

Traditionally, the dynamical system is specified as a network with MAK:
\begin{align*}
\ce{E}+\ce{S} \rightleftarrows \ce{E^*} \to \ce{E}+\ce{P} \\
\ce{F}+\ce{S} \leftarrow \ce{F^*} \rightleftarrows \ce{F}+\ce{P}
\end{align*}
The network has 6 complexes, is not weakly reversible, and has deficiency $\delta=6-2-3=1$.

However, it can equivalently be specified as a network with GMAK:
\[
\begin{tikzcd}[ampersand replacement=\&]
\mbox{\ovalbox{$\begin{array}{c} \ce{E}+\ce{F}+\ce{S} \\ (\ce{E}+\ce{S}) \end{array}$}} \arrow[r,yshift=+0.5ex,"k_{12}"] \& \mbox{\ovalbox{$\begin{array}{c} \ce{E^*} + \ce{F} \\ (\ce{E^*}) \end{array}$}} \arrow[l,yshift=-0.5ex,"k_{21}"] \arrow[d,xshift=+0ex,"k_{23}"] \\
\mbox{\ovalbox{$\begin{array}{c} \ce{E}+\ce{F^*} \\ (\ce{F^*}) \end{array}$}} \arrow[u,"k_{41}"] \arrow[r,yshift=+0.5ex,"k_{43}"] \& \mbox{\ovalbox{$\begin{array}{c} \ce{E}+\ce{F}+\ce{P} \\ (\ce{F}+\ce{P})\end{array}$}} \arrow[l,yshift=-0.5ex,"k_{34}"]
\end{tikzcd}
\]
This network has 4 complexes, is weakly reversible, and has $\delta = \tilde \delta = 4-1-3=0$
(since $\dim S=\dim \tilde S = 3$).
\blue{Hence, there exist CBE for all rate constants and no other steady states.}

Let the chemical species be ordered as $(\ce{E},\ce{E^*},\ce{F},\ce{F^*},\ce{S},\ce{P})$. Then,
\footnotesize
\[
Y = 
\begin{pmatrix}
 1 & 0 & 1 & 1 \\
 0 & 1 & 0 & 0 \\
 1 & 1 & 1 & 0 \\
 0 & 0 & 0 & 1 \\
 1 & 0 & 0 & 0 \\
 0 & 0 & 1 & 0 
\end{pmatrix} ,
\quad
\tilde Y =
\begin{pmatrix}
 1 & 0 & 0 & 0 \\
 0 & 1 & 0 & 0 \\
 0 & 0 & 1 & 0 \\
 0 & 0 & 0 & 1 \\
 1 & 0 & 0 & 0 \\
 0 & 0 & 1 & 0 
\end{pmatrix} ,
\quad \text{and} \quad
S = \im \begin{pmatrix} -1 & 0 & 0 \\ 1 & 0 & 0 \\ 0 & -1 & 0 \\ 0 & 1 & 0 \\ -1 & 0 & -1 \\ 0 & -1 & 1 \end{pmatrix} .
\]
\normalsize


Further, let $\la_\x, \la_\xxx, \la_\xxxx>0$ be the parameters of the full cycle
$C_\x = \{1 \to 2 \to 3 \to 4 \to 1\}$ and the two-cycles $C_\xxx = \{1 \rightleftarrows 2\}$ and $C_\xxxx = \{3 \rightleftarrows 4\}$, and let
\footnotesize
\[
A_\x = 
\begin{pmatrix}
 -1 & 0 & 0 & 1 \\
 1 & -1 & 0 & 0 \\
 0 & 1 & -1 & 0 \\
 0 & 0 & 1 & -1 
\end{pmatrix} ,
\quad
A_\xxx = 
\begin{pmatrix}
 -1 & 1 & 0 & 0 \\
 1 & -1 & 0 & 0 \\
 0 & 0 & 0 & 0 \\
 0 & 0 & 0 & 0 
\end{pmatrix} ,
\quad
A_\xxxx = 
\begin{pmatrix}
 0 & 0 & 0 & 0 \\
 0 & 0 & 0 & 0 \\
 0 & 0 & -1 & 1 \\
 0 & 0 & 1 & -1 
\end{pmatrix} 
\]
\normalsize
be the corresponding Laplacian matrices.
Using cycle decomposition~\eqref{eq:cycle},
\footnotesize
\begin{align*}
- \JJ &= - Y (\la_\x A_\x + \la_\xxx A_\xxx + \la_\xxxx A_\xxxx) \, \tilde Y^\trans \\
&= \begin{pmatrix}
 \la_\x+\la_\xxx & -\la_\x-\la_\xxx & 0 & 0 & \la_\x+\la_\xxx & 0 \\
 -\la_\x-\la_\xxx & \la_\x+\la_\xxx & 0 & 0 & -\la_\x-\la_\xxx & 0 \\
 0 & 0 & \la_\x+\la_\xxxx & -\la_\x-\la_\xxxx & 0 & \la_\x+\la_\xxxx \\
 0 & 0 & -\la_\x-\la_\xxxx & \la_\x+\la_\xxxx & 0 & -\la_\x-\la_\xxxx \\
 \la_\x+\la_\xxx & -\la_\xxx & 0 & -\la_\x & \la_\x+\la_\xxx & 0 \\
 0 & -\la_\x & \la_\x+\la_\xxxx & -\la_\xxxx & 0 & \la_\x+\la_\xxxx 
\end{pmatrix} .
\end{align*}
\normalsize
In particular, $\im \JJ = S$ and $-\JJ$ is a $P_0^+$-matrix, both necessary conditions for stability.
As sufficient conditions (via Theorem~\ref{thm:carlson}),
we check that $-\JJ$ is sign-symmetric,
has a $P$-matrix of order $r=\rank \JJ = \dim S = 3$ as a principal submatrix,
and only nonnegative principal minors of order 3.
Indeed, using a computer algebra system, we find that all conditions are fulfilled.
For example, the principal submatrix corresponding to the species $(\ce{E},\ce{F^*},\ce{P})$ is a $P$-matrix.

On the contrary, the conditions of Proposition~\ref{pro:S} do not hold.
They would guarantee the stability of {\em all} generalized mass-action systems with the same (stoichiometric and kinetic-order) complexes, 
but different \blue{(weakly reversible)} graphs.
However, for the ``reverse'' full cycle $C_\bullet = \{1 \to 4 \to 3 \to 2 \to 1\}$, 
the scaled Jacobian matrix $\JJ$ is not D-stable on $S$, 
since $-\JJ$ is not a $P_0^+$-matrix, cf.~Proposition~\ref{pro:nec}.
\end{exa}


\subsection{Epilogue on sign vector conditions} \label{sec:sign}

We have the following implications between the sign vector conditions in Theorem~\ref{thm:noother} and Proposition~\ref{pro:S}.

\vspace{-2ex}
\footnotesize
\begin{align*}
\sign(\tilde T) \subseteq \sign (T)^\updownarrow \quad & \quad \forall O \subset \R^s \colon 
\sign \left( (\tilde Y I_\Omega)^\trans (\Sigma( B(O) )) \right) 
\subseteq 
\sign \left( (Y I_\Omega)^\trans (B(\overline O)) \right)^\updownarrow 
\\
\rotatebox[origin=c]{45}{$\Downarrow$} \quad & \quad \rotatebox[origin=c]{-45}{$\Downarrow$} \\
\sign \left( (\tilde Y I_\Omega)^\trans (\Sigma(S)) \right) & \subseteq \sign (T)^\updownarrow 
\end{align*}
\normalsize
\blue{Here,} $s=\dim S$, and $O$ denotes an open orthant of $\R^s$.

The implication on the right has an important consequence.
Under the assumptions of Proposition~\ref{pro:S}, 
there are no other steady states in stoichiometric classes with a CBE, by Theorem~\ref{thm:noother}.

Under additional assumptions (on $S$ and $\tilde S$), the implication on the left becomes an equivalence.
\begin{lem}[\cite{MuellerRegensburger2012}, Corollary~3.8] \label{lem:dim}
Let $S,\tilde S$ be linear subspaces of $\R^n$.
If $\dim S=\dim \tilde S$,
then
\[
\sign(S) \cap \sign(\tilde S^\perp) = \{0\}
\quad \iff \quad
\sign(S^\perp) \cap \sign(\tilde S) = \{0\} .
\]
\end{lem}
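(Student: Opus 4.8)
The plan is to prove the equivalent statement obtained by negating both sides, namely that
\[
\sign(S) \cap \sign(\tilde S^\perp) \neq \{0\}
\iff
\sign(S^\perp) \cap \sign(\tilde S) \neq \{0\},
\]
since this formulation is manifestly symmetric under the exchange $(S,\tilde S)\leftrightarrow(S^\perp,\tilde S^\perp)$ (which also interchanges the two sides of the dimension hypothesis, as $\dim S=\dim\tilde S$ forces $\dim S^\perp=\dim\tilde S^\perp$). First I would record the dictionary between sign-vector intersections and diagonally scaled subspace intersections: for subspaces $U,V\subseteq\R^n$, a nonzero common sign vector $\sigma\in\sign(U)\cap\sign(V)$ exists if and only if there is a matrix $D\in\pd$ with $U\cap D\,V\neq\{0\}$. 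For the forward direction I pick $u\in U$ and $v\in V$ with $\sign(u)=\sign(v)=\sigma$ and set the diagonal entries of $D$ to $u_i/v_i$ (and to $1$ on the common zero support), so that $u=D\,v\in U\cap D\,V$ is nonzero; the converse is immediate since $D\in\pd$ preserves signs. This is exactly the reformulation already used in the proof of Theorem~\ref{thm:stab_unique}.

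The crux is to exploit the dimension hypothesis. For any $D\in\pd$, the subspace $D\,\tilde S^\perp$ has dimension $n-\dim\tilde S=n-\dim S$, so $S$ and $D\,\tilde S^\perp$ have complementary dimensions. For complementary subspaces $U,W$ with $\dim U+\dim W=n$, one has the elementary duality $U\cap W\neq\{0\}\iff U^\perp\cap W^\perp\neq\{0\}$, because $\dim(U\cap W)=n-\dim(U+W)$ and $(U+W)^\perp=U^\perp\cap W^\perp$. Applying this with $U=S$ and $W=D\,\tilde S^\perp$, and using that $D$ is symmetric and invertible so that $(D\,\tilde S^\perp)^\perp=D^{-1}(\tilde S^\perp)^\perp=D^{-1}\tilde S$, I obtain, for each fixed $D\in\pd$,
\[
S\cap D\,\tilde S^\perp\neq\{0\}
\iff
S^\perp\cap D^{-1}\tilde S\neq\{0\}.
\]

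Finally I would quantify over $D$. Since $D\mapsto D^{-1}$ is a bijection of $\pd$, the existence of some $D\in\pd$ making the left-hand intersection nontrivial is equivalent to the existence of some $D'=D^{-1}\in\pd$ making $S^\perp\cap D'\tilde S$ nontrivial. Translating both statements back through the dictionary from the first step yields precisely $\sign(S)\cap\sign(\tilde S^\perp)\neq\{0\}\iff\sign(S^\perp)\cap\sign(\tilde S)\neq\{0\}$, and negating both sides gives the lemma. I expect the only genuine obstacle to be bookkeeping: ensuring the orthogonal complement is threaded correctly through the diagonal scaling (where symmetry of $D$ is essential) and that the complementary-dimension duality is invoked with the correct pair of subspaces. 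Everything else is routine linear algebra once the sign-vector dictionary and the complementary-dimension hypothesis are in place.
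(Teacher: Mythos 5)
Your proof is correct. Note first that the paper does not actually prove Lemma~\ref{lem:dim}: it is imported verbatim from \cite{MuellerRegensburger2012} (Corollary~3.8), so there is no in-paper argument to compare against; your write-up therefore supplies a self-contained proof. Each step checks out: the dictionary ($\sign(U)\cap\sign(V)\neq\{0\}$ iff $U\cap D V\neq\{0\}$ for some $D\in\pd$) is exactly the diagonal-scaling reformulation the paper itself uses in the proof of Theorem~\ref{thm:stab_unique} and implicitly in the definition $\Sigma(S)=\bigcup_{D\in\pd}D(S)$; the identity $(D\,\tilde S^\perp)^\perp=D^{-1}\tilde S$ is where symmetry and invertibility of $D$ are correctly invoked; and the complementary-dimension duality $U\cap W\neq\{0\}\iff U^\perp\cap W^\perp\neq\{0\}$ (valid precisely because $\dim U+\dim W=n$, via $\dim(U\cap W)=n-\dim(U+W)=\dim(U^\perp\cap W^\perp)$) is the unique place the hypothesis $\dim S=\dim\tilde S$ enters, as it must. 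Quantifying over $D$ and using that $D\mapsto D^{-1}$ is a bijection of $\pd$ closes the loop. One could alternatively try to argue at the purely combinatorial level using the oriented-matroid duality $\sign(S^\perp)=\sign(S)^\perp$ of Proposition~\ref{minty_orthogonal}, but the dimension hypothesis would then have to be encoded as a rank condition on the dual sign-vector systems; your linear-algebra route is more elementary and avoids that machinery entirely.
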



\begin{lem} \label{lem:imA}
Let $S \subseteq \R^n$ be a linear subspace and $A \in \R^{a \times n}$ be a matrix.
Then,
\[
S^\perp \cap \im A^\trans = \{0\} 
\quad \implies \quad
A (\Sigma(S)) = \im A .
\]
\end{lem}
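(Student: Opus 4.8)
The plan is to reduce the statement to the purely linear identity $A(S) = \im A$, after which the sign structure plays no further role. First I would record the trivial half of the claimed equality: for any subset of $\R^n$ its image under $A$ lies in $\im A$, so $A(\Sigma(S)) \subseteq \im A$; and since $S \subseteq \Sigma(S)$ (every $s \in S$ satisfies $\sign(s) \in \sign(S)$, hence $s \in \sign^{-1}(\sign(S)) = \Sigma(S)$), monotonicity of images gives $A(S) \subseteq A(\Sigma(S))$. Thus it suffices to prove $\im A \subseteq A(S)$, because then $\im A \subseteq A(S) \subseteq A(\Sigma(S)) \subseteq \im A$ forces equality throughout.

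The key step is to reinterpret the hypothesis $S^\perp \cap \im A^\trans = \{0\}$ as a transversality condition. Using the fundamental identity $\im A^\trans = (\ker A)^\perp$ together with $(U+V)^\perp = U^\perp \cap V^\perp$, I would rewrite
\[
S^\perp \cap \im A^\trans = S^\perp \cap (\ker A)^\perp = (S + \ker A)^\perp .
\]
Hence the hypothesis is equivalent to $(S + \ker A)^\perp = \{0\}$, that is, to $S + \ker A = \R^n$.

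With this reformulation the conclusion is immediate: since $A$ annihilates $\ker A$, one has $A(S) = A(S + \ker A) = A(\R^n) = \im A$. Combining this with the sandwich from the first paragraph yields $A(\Sigma(S)) = \im A$, as desired.

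I expect no genuine obstacle here; the only point requiring care is the translation of the hypothesis, and in particular resisting the (tempting but unnecessary) belief that the sign-vector enlargement $\Sigma(S)$ is what makes the map hit all of $\im A$. In fact the linear subspace $S$ alone already surjects onto $\im A$ under the stated transversality, so passing to $\Sigma(S)$ only makes the required inclusion easier; it appears in the statement merely because that is the form invoked in Proposition~\ref{pro:S}.
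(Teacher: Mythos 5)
Your proposal is correct and takes essentially the same route as the paper: both arguments come down to the observation that under the hypothesis the linear image $A(S)$ by itself already equals $\im A$, while $\Sigma(S) = \bigcup_{D \in \pd} D(S)$ only contributes further subsets of $\im A$ (the paper isolates the $D = \id$ term of this union and verifies $\ker (B^\trans A^\trans) = \ker A^\trans$ with $S = \im B$, which is exactly the dual formulation of your primal identity $S + \ker A = \R^n$). The only difference is this dual-versus-primal bookkeeping of the transversality hypothesis, which is cosmetic.
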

\begin{proof}
Let $S = \im B$ with $B \in \R^{n \times b}$ (and hence $S^\perp = \ker B^\trans$),
and assume $\ker B^\trans \cap \im A^\trans = \{0\} $.
Then,
\begin{align*}
A (\Sigma(S))
&= A (\Sigma(\im B)) \\
&= A ( \textstyle \bigcup_{D \in \pd} D(\im B) ) \\
&= \textstyle \bigcup_{D \in \pd} \im (A D B) \\
&= \textstyle \bigcup_{D \in \pd} ( \ker (B^\trans D A^\trans) )^\perp \\
&= ( \ker (B^\trans \id\, A^\trans) )^\perp \cup \textstyle \bigcup_{D \in \pd \setminus \{\id\}} \ldots \\
&= ( \ker A^\trans )^\perp \cup \ldots \\
&= \im A .
\end{align*}
\end{proof}

\begin{pro}
Let $(G_k,y,\tilde y)$ be a generalized mass-action system,
\[
\sign(S) \cap \sign(\tilde S^\perp) = \{0\} ,
\quad \text{and} \quad
\dim S=\dim\tilde S .
\]
Then,
$(\tilde Y I_\Omega)^\trans(\Sigma(S)) = \tilde T$.
\end{pro}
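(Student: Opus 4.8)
The plan is to recognize the statement as a direct composition of Lemma~\ref{lem:imA} and Lemma~\ref{lem:dim}, with a trivial reduction in between. Applying Lemma~\ref{lem:imA} to the matrix $A = (\tilde Y I_\Omega)^\trans$ yields exactly the desired identity: its transpose has image $\im A^\trans = \im(\tilde Y I_\Omega) = \tilde S$ (recall $\ker(\tilde Y I_\Omega)^\trans = \tilde S^\perp$, so its orthogonal complement is $\tilde S$), so that the hypothesis of Lemma~\ref{lem:imA} reads $S^\perp \cap \tilde S = \{0\}$ and its conclusion reads $(\tilde Y I_\Omega)^\trans(\Sigma(S)) = \im(\tilde Y I_\Omega)^\trans = \tilde T$. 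Thus the entire task reduces to verifying $S^\perp \cap \tilde S = \{0\}$ from the two stated hypotheses.

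First I would use the dimension equality. Since $\dim S = \dim \tilde S$, Lemma~\ref{lem:dim} converts the given sign condition $\sign(S) \cap \sign(\tilde S^\perp) = \{0\}$ into the equivalent $\sign(S^\perp) \cap \sign(\tilde S) = \{0\}$. Then I would pass from this sign-vector statement to the genuine subspace intersection: any nonzero $v \in S^\perp \cap \tilde S$ would satisfy $\sign(v) \neq 0$ and $\sign(v) \in \sign(S^\perp) \cap \sign(\tilde S)$, contradicting the equality just obtained; hence $S^\perp \cap \tilde S = \{0\}$, as required.

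There is no real obstacle here --- the argument is a two-lemma chain. The only points demanding care are bookkeeping: identifying $\im A^\trans$ with $\tilde S$ rather than with the ambient space $\R^\Omega$, and noting that I only need the (easy) implication from the sign-vector intersection being trivial to the subspace intersection being trivial, not its converse (which fails in general). Everything else is supplied by the preceding lemmas, so the proof is short once the reduction is set up.
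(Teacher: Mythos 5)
Your proof is correct and follows exactly the paper's own argument: apply Lemma~\ref{lem:dim} (using $\dim S = \dim \tilde S$) to convert the hypothesis into $\sign(S^\perp) \cap \sign(\tilde S) = \{0\}$, pass from sign vectors to the subspace intersection $S^\perp \cap \tilde S = \{0\}$, and then invoke Lemma~\ref{lem:imA} with $A = (\tilde Y I_\Omega)^\trans$ and $\im A^\trans = \tilde S$. The bookkeeping points you flag (identifying $\im A^\trans$ with $\tilde S$, and needing only the easy direction from sign-vector to subspace intersection) are precisely the implicit steps in the paper's three-line chain.
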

\begin{proof}
By Lemma~\ref{lem:dim} and Lemma~\ref{lem:imA} 
for $A = (\tilde Y I_\Omega)^\trans$ and $\im A^\trans = \tilde S$,
\begin{align*}
\sign(S) \cap \sign(\tilde S^\perp) = \{0\}
& \iff 
\sign(S^\perp) \cap \sign(\tilde S) = \{0\} \\
& \,\implies 
S^\perp \cap \tilde S = \{0\} \\
& \,\implies 
(\tilde Y I_\Omega)^\trans(\Sigma(S)) = \im (\tilde Y I_\Omega)^\trans = \tilde T .
\end{align*}
\end{proof}




\subsection*{Acknowledgments}


SM was supported by the Austrian Science Fund (FWF), grant P33218.
GR was supported by the Austrian Science Fund (FWF), grant P32301.
We thank two anonymous reviewers for their careful reading and helpful comments.



\bibliographystyle{abbrv} 
\bibliography{MR,linearstability,stability}


\newpage

\appendix 

\section*{Appendix} \label{app}


In the main text,
we introduce generalized mass-action systems in Section~\ref{sec:gmas}
and present our main results on complex-balanced equilibria (CBE) in Section~\ref{sec:main}.

In this appendix,
we provide basic results regarding 
order theory and oriented matroids.
First,
we consider (total) preorders.
Further, 
we present fundamental results on sign vectors of linear subspaces,
using a general theorem of the alternative.

\section{(Total) preorders} \label{app:order}

A {\em preorder} $\le$ on a set $V$ is a binary relation on $V$ such that 
$i \le i$ (reflexivity) and $i \le j$ and $j \le k$ imply $i \le k$ (transitivity)
for all $i,j,k \in V$.
We write $i \sim j$ if $i \le j$ and $j \le i$ (equivalence)
and $i < j$ if $i \le j$ and $j \not\le i$ (strictness).
%

A preorder $\le$ on $V$ is {\em total} if $i \le j$ or $j \le i$ for all $i,j \in V$.
If $\le$ is total, then $i < j$ is equivalent to $j \not\le i$ for all $i,j \in V$.
%

Two preorders $\le_1,\le_2$ on $V$ are {\em harmonious} (a symmetric relation)
if $i <_1 j$ implies $j \not<_2 i$ (and vice versa, that is, $i <_2 j$ implies $i \not<_1 j$) for all $i,j \in V$.
Two total preorders $\le_1,\le_2$ on $V$ are harmonious if and only if $i <_1 j$ implies $i \le_2 j$
(and vice versa, that is, $i <_2 j$ implies $i \le_1 j$) for all $i,j \in V$.

\begin{lem} \label{lem:order}
Let $\le_1,\le_2$ be two total preorders on $V$ that are harmonious.
Then there exists a total strict order $<$ on $V$ such that $i < j$ implies $i \le_1 j$ and $i \le_2 j$.
\end{lem}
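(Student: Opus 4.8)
The plan is to collapse the two preorders into a single \emph{total} preorder and then refine that to a strict total order. Concretely, I would define the intersection relation $\preceq$ on $V$ by declaring $i \preceq j$ if and only if $i \le_1 j$ \emph{and} $i \le_2 j$. Reflexivity and transitivity of $\preceq$ are immediate, since an intersection of preorders is a preorder. The crucial claim is that $\preceq$ is again \emph{total}, and this is precisely where harmoniousness is used: harmoniousness guarantees that strictness in one preorder cannot be contradicted by the other.

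To establish totality, I would fix $i,j \in V$ and use totality of $\le_1$; without loss of generality $i \le_1 j$. If this is strict, $i <_1 j$, then harmoniousness yields $i \le_2 j$, so $i \preceq j$. If instead $i \sim_1 j$ (that is, both $i \le_1 j$ and $j \le_1 i$), I would invoke totality of $\le_2$: if $i \le_2 j$, then $i \preceq j$; if $j \le_2 i$, then combining with $j \le_1 i$ gives $j \preceq i$. In every case $i$ and $j$ are $\preceq$-comparable, so $\preceq$ is a total preorder.

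It remains to refine $\preceq$ to a total strict order $<$. The equivalence classes of the relation $\sim_\preceq$ are linearly ordered by the strict part $\prec$; imposing an arbitrary linear order within each class (trivial when $V$ is finite, and available in general by well-ordering the classes) produces a total strict order $<$ satisfying $i \prec j \Rightarrow i < j$. For the conclusion, suppose $i < j$. By totality of $\preceq$, either $i \preceq j$ or $j \preceq i$; the latter, if $i \not\preceq j$, would force $j \prec i$ and hence $j < i$, contradicting $i < j$. Thus $i \preceq j$, which by definition gives $i \le_1 j$ and $i \le_2 j$, as required.

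I expect the only genuine subtlety to be the totality argument for $\preceq$, where one must split on whether the comparison $i \le_1 j$ is strict or an equivalence and apply harmoniousness exactly in the strict case. The refinement to a strict total order and the final verification are routine, and since $V$ is the (finite) vertex set of the underlying graph, no appeal to the axiom of choice is needed in the intended application.
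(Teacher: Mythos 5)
Your proof is correct and takes essentially the same route as the paper: the equivalence classes of your intersection preorder $\preceq$ are precisely the classes $V_i=\{\,j\in V : i\sim_1 j \text{ and } i\sim_2 j\,\}$ used in the paper's proof, and (by harmoniousness and totality) the strict part $\prec$ coincides with the paper's rule of setting $i<j$ whenever $i<_1 j$ or $i<_2 j$, so both arguments construct the same order. Your packaging via the totality of $\preceq$ merely makes explicit the consistency and transitivity checks that the paper's terse proof leaves implicit.
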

\begin{proof}
For $i \in V$, let $V_i = \{ j \in V \mid i \sim_1 j \text{ and } i \sim_2 j \}$
and choose some total strict order $<$ on every equivalence class $V_i$. 
Now, let $i,j \in V$. 
If $V_i=V_j$ (that is, $i,j$ are equivalent),
assume $i < j$.
Then, $i \le_1 j$ and $i \le_2 j$, as claimed.
If $V_i\not=V_j$, assume $i <_1 j$ or $i <_2 j$ and set $i < j$. 
In any case, $i \le_1 j$ and $i \le_2 j$, as claimed.
\end{proof}

Given two preorders $\le_1,\le_2$ on $V$, $\le_1$ {\em conforms to} $\le_2$ (a non-symmetric relation)
if $\le_1,\le_2$ are harmonious and $i \sim_2 j$ implies $i \not<_1 j$ and $j \not<_1 i$.
Given two total preorders $\le_1,\le_2$ on $V$, $\le_1$ conforms to $\le_2$ 
if and only if $\le_1,\le_2$ are harmonious and $i \sim_2 j$ implies $i \sim_1 j$.


\section{Sign vectors of subspaces} \label{app:sign}

{\bf Additional notation.}
For a sign vector $\sigma \in \{-,0,+\}^n$, we introduce 
\[
\sigma^- = \{ i \mid \sigma_i = -\}, \quad \sigma^0 = \{ i \mid \sigma_i = 0\}, \quad \text{and} \quad \sigma^+ = \{ i \mid \sigma_i = +\} .
\]
Further, we write $\sigma \perp \tau$ ($\sigma$ and $\tau$ are orthogonal)
if either $\sigma_i \tau_i = 0$ for all $i$ or there exist $i,j$ with $\sigma_i \tau_i = -$ and $\sigma_j \tau_j = +$.
For $\mathcal{T} \subseteq \{-,0,+\}^n$, we introduce the orthogonal complement
\[
\mathcal{T}^\perp = \{ \sigma \in \{-,0,+\}^n \mid \sigma \perp \tau \text{ for all } \tau \in \mathcal{T} \} \, .
\]

As in \cite[Appendix B]{MuellerHofbauerRegensburger2019}, we recall a general theorem of the alternative for subspaces of $\R^n$ 
that allows to easily derive theorems of the alternative for sign vectors of a linear subspace and its orthogonal complement. 
For the relation to standard theorems of the alternative,
see \cite{Minty1974};
for the corresponding statements for arbitrary oriented matroids,
see \cite[Section 3.4]{BjornerLasSturmfelsWhiteZiegler1999} or~\cite[Chapter~5]{BachemKern1992}.

Let $x \in \R^n$, and let $I_1,\ldots,I_n$ be intervals of $\R$.
We define the interval
\begin{align*}
I(x) &\equiv x_1 I_1  + \ldots + x_n I_n \\ &= \{ x_1 y_1 + \ldots + x_n y_n \in \R \mid y_1 \in I_1, \ldots, y_n \in I_n \}
\end{align*}
and write $I(x) > 0$ if $y>0$ for all $y \in I(x)$.

\begin{thm}[``Minty's Lemma''. Theorem 22.6 in \cite{Rockafellar1970}] \label{minty}
Let $S$ be a linear subspace of~$\R^n$, and let $I_1,\ldots,I_n$ be intervals of $\R$.
Then one and only one of the following alternatives holds:
\begin{itemize}
\item[(a)]
There exists a vector $x = (x_1,\ldots,x_n)^\trans \in S$ such that
\[
x_1 \in I_1, \, \ldots,\, x_n \in I_n .
\]
\item[(b)]
There exists a vector $\xa = (\xa_1,\ldots, \xa_n)^\trans \in S^\perp$ such that
\[
\xa_1 I_1 + \ldots + \xa_n I_n > 0 .
\]
\end{itemize}
\end{thm}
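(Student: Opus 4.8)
The plan is to treat this as a theorem of the alternative, proving separately that the two alternatives are mutually exclusive and that at least one of them must occur. The first half is the short one: if $x \in S$ witnesses (a) and $\xa \in S^\perp$ witnesses (b), then the number $\sum_{i} \xa_i x_i$ belongs to $\xa_1 I_1 + \cdots + \xa_n I_n$ (since $x_i \in I_i$ for each $i$) and is therefore strictly positive by (b); but $x \in S$ and $\xa \in S^\perp$ force $\sum_i \xa_i x_i = 0$, a contradiction. Hence (a) and (b) cannot hold together.

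For the converse I would assume (a) fails and construct a witness for (b). Writing $C = I_1 \times \cdots \times I_n$ for the (nonempty, convex) box, alternative (a) is precisely $S \cap C \neq \emptyset$, so its failure means $0 \notin K := S + C$, again a convex set since $S = -S$. As $0 \notin K$, in particular $0$ lies outside the relative interior of $K$, so the proper separation theorem supplies a nonzero $a$ with $a \cdot k \ge 0$ for all $k \in K$. Feeding in $k = c + s$ with $c \in C$ fixed and $s$ ranging over the subspace $S$ shows that $s \mapsto a \cdot s$ is bounded below on $S$, which forces $a \cdot s = 0$ for all $s \in S$; thus $a \in S^\perp$, and then $a \cdot c \ge 0$ for every $c \in C$, i.e. $a_1 I_1 + \cdots + a_n I_n \ge 0$.

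The real work is upgrading this non-strict inequality to the strict one required by (b), and here I would induct on $n$. If the infimum of $a \cdot c$ over $C$ is nowhere attained --- equivalently $C_0 := C \cap a^\perp = \emptyset$, where $a^\perp$ is the hyperplane normal to $a$ --- then $a \cdot c > 0$ throughout $C$ and $a$ already witnesses (b). Otherwise $C_0$ is the face $\prod_{i \in N}\{e_i\} \times \prod_{i \notin N} I_i$, where $N = \{i : a_i \neq 0\}$ is nonempty and each $e_i$ is an attained endpoint of $I_i$. Since $C_0 \subseteq C$, we still have $S \cap C_0 = \emptyset$, and I would now peel off the coordinates in $N$. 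Either no point of $S$ meets the affine constraints $x_i = e_i$ $(i \in N)$, a purely linear infeasibility yielding a vector of $S^\perp$ supported on $N$; or, fixing such a point $x^0 \in S$ and translating, the residual feasibility problem lives on the coordinates outside $N$, has (a) failing (a feasible residual point would, together with the endpoints $e_i \in I_i$, reconstruct a witness for the original (a)), and so by the inductive hypothesis admits a strict witness $\bar a$ in the relevant orthogonal complement.

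I expect the last stage --- lifting $\bar a$ back to $\R^n$ and adding a sufficiently large multiple of $a$ so that the combined vector lies in $S^\perp$ and satisfies $\xa_1 I_1 + \cdots + \xa_n I_n > 0$ simultaneously in the $N$- and non-$N$-coordinates --- to be the main obstacle, since it is where the two functionals must be balanced against the translation $x^0$ and the endpoint values $e_i$. By contrast, the base case $n = 1$ (where $S$ is either $\{0\}$ or $\R$, handled directly) and the separation step itself are routine.
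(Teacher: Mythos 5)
First, note that the paper itself offers no proof of this statement: it is imported verbatim from Rockafellar (Theorem~22.6), so your attempt has to stand on its own rather than be measured against an in-paper argument. Much of it does stand. The mutual-exclusivity half is complete and correct. The separation step is also correct: $0 \notin K = S + C$, proper separation gives a nonzero $a$ with $a \cdot k \ge 0$ on $K$, a linear functional bounded below on the subspace $S$ must vanish on it, hence $a \in S^\perp$ and $a \cdot c \ge 0$ on $C$. Your identification of $C_0 = C \cap a^\perp$, when nonempty, as the face $\prod_{i \in N}\{e_i\} \times \prod_{i \notin N} I_i$ is right as well, since $a \cdot c \ge 0$ on $C$ forces every point of $C_0$ to sit at attained extremal endpoints coordinatewise and forces $\sum_{i \in N} a_i e_i = 0$ (a fact you will need below). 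One minor slip: ``infimum nowhere attained'' is \emph{not} equivalent to $C_0 = \emptyset$ (the infimum may be attained at a positive value); but the dichotomy you actually use --- $C_0$ empty or not --- is the correct one.

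The genuine gap is exactly where you flag it, and the mechanism you propose cannot close it as stated. You suggest lifting $\bar a$ to $\R^n$ and ``adding a sufficiently large multiple of $a$ so that the combined vector lies in $S^\perp$''. But $a \in S^\perp$, so adding any multiple of $a$ to the zero-extension $\tilde a$ of $\bar a$ never changes whether the result lies in $S^\perp$; since $\tilde a$ is orthogonal to $S_N := S \cap W$, $W = \{x : x_i = 0 \text{ for } i \in N\}$, but in general not to $S$, no choice of $\lambda$ makes $\tilde a + \lambda a \in S^\perp$. Membership in $S^\perp$ must be arranged \emph{before} the large-$\lambda$ step, not by it. The repair: from $\tilde a \in S_N^\perp$ and $(S \cap W)^\perp = S^\perp + W^\perp$, write $\tilde a = w + u$ with $w \in S^\perp$ and $u$ supported on $N$, so that $w \in S^\perp$ agrees with $\bar a$ outside $N$. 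Then $x^* = \lambda a + w$ does work once $\lambda > \max_{i \in N} |w_i|/|a_i|$: using $\sum_{i \in N} a_i e_i = 0$ and $w \cdot x^0 = 0$ to absorb the constant terms, one computes, for every $c \in C$,
\[
x^* \cdot c \;=\; \sum_{i \in N} (\lambda a_i + w_i)(c_i - e_i) \;+\; \sum_{i \notin N} \bar a_i \,(c_i - x^0_i) ,
\]
where each term of the first sum is $\ge 0$ (on $I_i$ the quantity $c_i - e_i$ has the constant sign of $a_i$, hence of $\lambda a_i + w_i$) and the second sum is $> 0$ by the inductive hypothesis; the same computation with $w$ replaced by the linear-infeasibility certificate $b$ (supported on $N$, normalized so that $\sum_{i \in N} b_i e_i > 0$) settles your other sub-case. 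So your architecture is sound and completable, but the proof as submitted is missing its decisive step, and the one concrete suggestion you make for that step would fail.
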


We derive two consequences of Theorem~\ref{minty}.

\begin{cor} \label{minty_sign}
Let $S$ be a linear subspace of $\R^n$ and $\sigma \in \{ -,0,+ \}^n$ be a nonzero sign vector.
Then either (a) there exists a vector $x \in S$ with $x_i>0$ for $i \in \sigma^+$, $x_i<0$ for $i \in \sigma^-$, and $x_i=0$ for $i \in \sigma^0$
or (b) there exists a vector $\xa \in S^\perp$ with $\xa_i \ge 0$ for $i \in \sigma^+$, $\xa_i \le 0$ for $i \in \sigma^-$, 
and $\xa_i \neq 0$ for some $i \in \sigma^+ \cup \sigma^-$.

In terms of sign vectors, either $\sigma \in \sign(S)$
or there exists (a nonzero) ${\sigma^* \in \sign(S^\perp)}$ with $\sigma \hads \sigma^* > 0$.
\end{cor}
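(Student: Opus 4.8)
The plan is to apply Minty's Lemma (Theorem~\ref{minty}) with intervals tailored to the sign vector $\sigma$ and then read off the two alternatives. Concretely, I would set
\[
I_i = (0,\infty) \text{ for } i \in \sigma^+, \qquad
I_i = (-\infty,0) \text{ for } i \in \sigma^-, \qquad
I_i = \{0\} \text{ for } i \in \sigma^0 .
\]
With this choice, alternative (a) of Theorem~\ref{minty} asserts exactly the existence of $x \in S$ with $x_i > 0$ on $\sigma^+$, $x_i < 0$ on $\sigma^-$, and $x_i = 0$ on $\sigma^0$, that is, $\sigma \in \sign(S)$. Since Minty's Lemma guarantees that precisely one of the two alternatives holds, the only remaining work is to verify that alternative (b) for these intervals coincides with condition (b) of the corollary.

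First I would translate alternative (b). For $\xa \in S^\perp$ I would analyze the Minkowski sum $\xa_1 I_1 + \cdots + \xa_n I_n$ term by term. Each summand $\xa_i I_i$ equals $\{0\}$ when $\xa_i = 0$ (in particular for all $i \in \sigma^0$); it is the positive half-line $(0,\infty)$ when ($i \in \sigma^+$ and $\xa_i > 0$) or ($i \in \sigma^-$ and $\xa_i < 0$); and it is the negative half-line $(-\infty,0)$ in the two remaining cases, ($i \in \sigma^+$ and $\xa_i < 0$) or ($i \in \sigma^-$ and $\xa_i > 0$). The requirement $I(\xa) > 0$ forbids any summand from reaching arbitrarily negative values, i.e. no summand may equal $(-\infty,0)$; this is exactly $\xa_i \ge 0$ on $\sigma^+$ and $\xa_i \le 0$ on $\sigma^-$, with $\xa_i$ unconstrained on $\sigma^0$. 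Under these sign constraints every summand has infimum $0$, so the whole sum has infimum $0$, and $I(\xa) > 0$ holds if and only if this infimum is not attained, i.e. if and only if at least one summand is the open half-line $(0,\infty)$. That last condition is precisely $\xa_i \ne 0$ for some $i \in \sigma^+ \cup \sigma^-$. Hence alternative (b) reproduces condition (b) of the corollary.

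Finally I would record the sign-vector reformulation. Alternative (a) is literally $\sigma \in \sign(S)$. For alternative (b), writing $\sigma^* = \sign(\xa)$, the constraints $\xa_i \ge 0$ on $\sigma^+$ and $\xa_i \le 0$ on $\sigma^-$ give $\sigma_i \sigma^*_i \ge 0$ for all $i$ (noting $\sigma_i \sigma^*_i = 0$ on $\sigma^0$), that is, $\sigma \hads \sigma^* \ge 0$; and $\xa_i \ne 0$ for some $i \in \sigma^+ \cup \sigma^-$ forces $\sigma_i \sigma^*_i = +$ there, so $\sigma \hads \sigma^* \ne 0$. Together these say $\sigma \hads \sigma^* > 0$ for a nonzero $\sigma^* \in \sign(S^\perp)$, as claimed.

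The one step requiring genuine care — the main obstacle — is the term-by-term analysis of the Minkowski sum in alternative (b): one must correctly track which scalings flip the two open half-lines and confirm that ``$I(\xa) > 0$'' is equivalent to the combination of the sign constraints on $\sigma^+ \cup \sigma^-$ together with the single strict-positivity (nonvanishing) requirement. Everything else is a direct transcription of Minty's Lemma and an elementary passage to sign vectors.
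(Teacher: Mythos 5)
Your proposal is correct and is exactly the paper's proof: the paper applies Theorem~\ref{minty} with the identical choice of intervals ($I_i = (0,+\infty)$ on $\sigma^+$, $I_i = (-\infty,0)$ on $\sigma^-$, $I_i = \{0\}$ on $\sigma^0$) and simply leaves implicit the term-by-term analysis of the Minkowski sum and the passage to sign vectors, which you carry out explicitly and correctly.
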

\begin{proof}
By Theorem~\ref{minty} with $I_i = (0,+\infty)$ for $i \in \sigma^+$, $I_i = (-\infty,0)$ for $i \in \sigma^-$, and $I_i = \{0\}$ otherwise.
\end{proof}

\begin{cor} \label{minty_other}
Let $S$ be a linear subspace of $\R^n$ and $\sigma \in \{ -,0,+ \}^n$ be a nonzero sign vector.
Then either (a) there exists a vector $x \in S$ with $x_i>0$ for $i \in \sigma^+$ and $x_i<0$ for $i \in \sigma^-$
or (b) there exists a nonzero vector $\xa \in S^\perp$ with $\xa_i \ge 0$ for $i \in \sigma^+$, $\xa_i \le 0$ for $i \in \sigma^-$, 
and $\xa_i = 0$ otherwise.

In terms of sign vectors, either there exists $\tau \in \sign(S)$ with $\tau \ge \sigma$
or there exists a nonzero $\tau^* \in \sign(S^\perp)$ with $\tau^* \le \sigma$.
\end{cor}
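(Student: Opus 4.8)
The plan is to mirror the proof of Corollary~\ref{minty_sign} and invoke Minty's Lemma (Theorem~\ref{minty}), with the single change that the intervals indexed by $\sigma^0$ are enlarged from $\{0\}$ to all of $\R$. Concretely, I would set
\[
I_i = (0,+\infty) \text{ for } i \in \sigma^+, \quad I_i = (-\infty,0) \text{ for } i \in \sigma^-, \quad I_i = \R \text{ for } i \in \sigma^0 .
\]
With this choice, alternative~(a) of Theorem~\ref{minty} produces a vector $x \in S$ with $x_i>0$ on $\sigma^+$ and $x_i<0$ on $\sigma^-$ and no constraint on $\sigma^0$, which is exactly alternative~(a) of the corollary.

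The substance of the argument is to read off alternative~(b). Here I would analyze the condition $\xa_1 I_1 + \ldots + \xa_n I_n > 0$ for $\xa \in S^\perp$ term by term. The key points are: since $I_i = \R$ for $i \in \sigma^0$, any nonzero coefficient $\xa_i$ there would make the whole interval unbounded below, so necessarily $\xa_i = 0$ on $\sigma^0$; for $i \in \sigma^+$ the half-line $(0,+\infty)$ forces $\xa_i \ge 0$ (a negative coefficient would contribute $(-\infty,0)$), and symmetrically $\xa_i \le 0$ on $\sigma^-$. Finally, because each surviving term comes from an open half-line and hence lies in $(0,+\infty)$, the sum is strictly positive precisely when at least one $\xa_i$ with $i \in \sigma^+ \cup \sigma^-$ is nonzero, which is the nonzero requirement on $\xa$. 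This yields alternative~(b) of the corollary.

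To finish, I would translate both alternatives into sign vectors. For (a), putting $\tau = \sign(x)$ gives $\tau_i = \sigma_i$ on $\sigma^+ \cup \sigma^-$ and $\tau_i \in \{-,0,+\}$ on $\sigma^0$; since $0 \le -$ and $0 \le +$, this means $\tau \ge \sigma$ componentwise, so there is $\tau \in \sign(S)$ with $\tau \ge \sigma$. For (b), putting $\tau^* = \sign(\xa)$ gives $\tau^*_i \in \{0,\sigma_i\}$ on $\sigma^+ \cup \sigma^-$ and $\tau^*_i = 0$ on $\sigma^0$, hence $\tau^* \le \sigma$, and $\tau^*$ is nonzero because $\xa$ is.

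I expect the only delicate step to be the term-by-term analysis of $I(\xa)>0$, specifically the verification that strictness (rather than mere nonnegativity) of the sum corresponds exactly to $\xa$ being nonzero on $\sigma^+ \cup \sigma^-$; the remaining bookkeeping and the sign-vector translation are routine, and the mutual exclusivity of the two alternatives is inherited directly from Theorem~\ref{minty}.
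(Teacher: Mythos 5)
Your proposal is correct and is exactly the paper's proof: the same application of Theorem~\ref{minty} with $I_i = (0,+\infty)$ on $\sigma^+$, $I_i = (-\infty,0)$ on $\sigma^-$, and $I_i = (-\infty,+\infty)$ on $\sigma^0$. The paper states this choice in one line and leaves the read-off of alternative~(b) and the sign-vector translation implicit, which you have carried out correctly.
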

\begin{proof}
By Theorem~\ref{minty} with $I_i = (0,+\infty)$ for $i \in \sigma^+$, $I_i = (-\infty,0)$ for $i \in \sigma^-$, and $I_i = (-\infty,\infty)$ otherwise.
\end{proof}

\begin{pro} \label{minty_orthogonal}
Let $S$ be a linear subspace of $\R^n$. Then,
\[
\sign(S^\perp)=\sign(S)^\perp. 
\]
\end{pro}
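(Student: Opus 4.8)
The plan is to prove the two inclusions $\sign(S^\perp) \subseteq \sign(S)^\perp$ and $\sign(S)^\perp \subseteq \sign(S^\perp)$ separately; the first is elementary, while the second is where the theorem of the alternative does the real work. Throughout I note that the zero sign vector lies in both sides ($0 \in \sign(S^\perp)$ since $0 \in S^\perp$, and $0 \in \sign(S)^\perp$ since $0 \perp \tau$ for every $\tau$), so it suffices to treat nonzero sign vectors.

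For $\sign(S^\perp) \subseteq \sign(S)^\perp$, I would take $\sigma^* = \sign(x^*)$ with $x^* \in S^\perp$ and an arbitrary $\tau = \sign(x)$ with $x \in S$, and examine the orthogonality relation $x \cdot x^* = \sum_i x_i x_i^* = 0$. Since the products $x_i x_i^*$ sum to zero, either every term vanishes---in which case $\sigma^*_i \tau_i = 0$ for all $i$---or there is both a strictly positive and a strictly negative term, which yields indices $i,j$ with $\sigma^*_i\tau_i = +$ and $\sigma^*_j\tau_j = -$. In either case $\sigma^* \perp \tau$ by the definition of orthogonality of sign vectors, and since $\tau$ was arbitrary this gives $\sigma^* \in \sign(S)^\perp$.

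For the reverse inclusion $\sign(S)^\perp \subseteq \sign(S^\perp)$, I would argue by contradiction using Corollary~\ref{minty_sign} applied to the subspace $S^\perp$ (together with $(S^\perp)^\perp = S$). Fix a nonzero $\sigma \in \sign(S)^\perp$ and suppose $\sigma \notin \sign(S^\perp)$. The corollary then forces its second alternative: there is a nonzero $\sigma^* \in \sign((S^\perp)^\perp) = \sign(S)$ with $\sigma \hads \sigma^* > 0$. But $\sigma \hads \sigma^* > 0$ means the componentwise products are all nonnegative with at least one positive, so in particular no product is negative and not all products are zero; by the definition of $\perp$ this is exactly the statement $\sigma \not\perp \sigma^*$. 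This contradicts $\sigma \in \sign(S)^\perp$, since $\sigma^* \in \sign(S)$. Hence $\sigma \in \sign(S^\perp)$.

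The main obstacle is the second inclusion: exhibiting an actual vector in $S^\perp$ realizing a prescribed sign pattern is not something achievable by direct manipulation, and it is precisely here that Minty's Lemma (via Corollary~\ref{minty_sign}) is indispensable. The only points demanding care are the bookkeeping that translates the alternative into sign-vector language and the verification that $\sigma \hads \sigma^* > 0$ genuinely negates $\sigma \perp \sigma^*$ under the stated definitions of $\hads$ and $\perp$.
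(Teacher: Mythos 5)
Your proposal is correct and follows essentially the same route as the paper: the inclusion $\sign(S^\perp) \subseteq \sign(S)^\perp$ via the orthogonality of representative vectors $u \cdot v = 0$, and the reverse inclusion via Corollary~\ref{minty_sign} applied to $S^\perp$ (using $(S^\perp)^\perp = S$), noting that $\sigma \hads \sigma^* > 0$ negates $\sigma \perp \sigma^*$. The only cosmetic difference is that you phrase the second inclusion as a contradiction where the paper uses the contrapositive, and you spell out a few sign-bookkeeping details the paper leaves implicit.
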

\begin{proof}
($\subseteq$): Let $\sigma \in \sign(S^\perp)$ and $\tau \in \sign(S)$.
Now, let $u \in S^\perp$ and $v \in S$ such that $\sigma = \sign(u)$ and $\tau = \sign(v)$.
Then, $u \cdot v = 0$ implies $\sigma \perp \tau = 0$,
and hence $\sigma \in \sign(S)^\perp$.

($\supseteq$): Let $\sigma \notin \sign(S^\perp)$.
By Corollary~\ref{minty_sign}, there exists $\tau \in \sign(S)$ with $\sigma \hads \tau > 0$.
In particular, $\sigma \not\perp \tau$,
and hence $\sigma \notin \sign(S)^\perp$.
\end{proof}
This completes the proof of~\cite[
Corollary~53]{MuellerHofbauerRegensburger2019},
where we did not single out Corollary~\ref{minty_sign}.
For an alternative proof, using Farkas Lemma, see~\cite[Proposition~6.8]{Ziegler1995}.

\subsubsection*{Two subspaces}

The following new result allows an alternative formulation of the sign vector condition $\sign(S) \cap \sign(\tilde S^\perp) = \{0\}$,
which characterizes the uniqueness of CBE, cf.~Theorem~\ref{thm:unique}.

\begin{pro} \label{pro:intersect}
Let $S_1,S_2$ be linear subspaces of $\R^n$. 
Then, the following two statements are equivalent.
\begin{itemize}
\item[(i)]
$\sign(S_1) \cap \sign(S_2^\perp) = \{0\}$.
\item[(ii)]
For every nonzero $\sigma_1 \in \sign(S_1)$, there exists (a nonzero) $\sigma_2 \in \sign(S_2)$ with $\sigma \hads \tilde \sigma > 0$.
\end{itemize}
\end{pro}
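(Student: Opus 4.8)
The plan is to prove both implications by contraposition, using the duality $\sign(S_2^\perp) = \sign(S_2)^\perp$ from Proposition~\ref{minty_orthogonal} to convert membership $\sigma_1 \in \sign(S_2^\perp)$ into the purely combinatorial statement that $\sigma_1 \perp \sigma_2$ for every $\sigma_2 \in \sign(S_2)$. The bridge between the two formulations is an elementary reading of the definition of $\perp$: for fixed sign vectors, $\sigma_1 \not\perp \sigma_2$ holds exactly when there is at least one nonzero product $(\sigma_1)_i(\sigma_2)_i$ and all nonzero products share a common sign, that is, when $\sigma_1 \hads \sigma_2 > 0$ or $\sigma_1 \hads \sigma_2 < 0$. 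Equivalently, $\sigma_1 \perp \sigma_2$ iff both $\sigma_1 \hads \sigma_2 \not> 0$ and $\sigma_1 \hads (-\sigma_2) \not> 0$. I would record this small equivalence first, since the rest is bookkeeping around it.

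For the contrapositive of (ii)$\Rightarrow$(i), I would start from a nonzero $\sigma_1 \in \sign(S_1) \cap \sign(S_2^\perp)$. By Proposition~\ref{minty_orthogonal}, $\sigma_1 \in \sign(S_2)^\perp$, hence $\sigma_1 \perp \sigma_2$ for every $\sigma_2 \in \sign(S_2)$. By the equivalence above, no such $\sigma_2$ satisfies $\sigma_1 \hads \sigma_2 > 0$, so (ii) fails for this $\sigma_1$. This direction is immediate.

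For the contrapositive of (i)$\Rightarrow$(ii), suppose (ii) fails: there is a nonzero $\sigma_1 \in \sign(S_1)$ with $\sigma_1 \hads \sigma_2 \not> 0$ for all $\sigma_2 \in \sign(S_2)$. Here is the step I expect to be the main obstacle: the hypothesis only forbids a \emph{harmonious overlap} and, a priori, leaves open the asymmetric case $\sigma_1 \hads \sigma_2 < 0$, which is \emph{not} the same as $\sigma_1 \perp \sigma_2$. To close this gap I would exploit that $\sign(S_2)$ is closed under negation (as $S_2$ is a subspace and $\sign(-v) = -\sign(v)$): applying the failure of (ii) to both $\sigma_2$ and $-\sigma_2$ yields $\sigma_1 \hads \sigma_2 \not> 0$ \emph{and} $\sigma_1 \hads (-\sigma_2) \not> 0$, which by the equivalence above forces $\sigma_1 \perp \sigma_2$. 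Since this holds for all $\sigma_2 \in \sign(S_2)$, we obtain $\sigma_1 \in \sign(S_2)^\perp = \sign(S_2^\perp)$ via Proposition~\ref{minty_orthogonal}; together with $\sigma_1 \in \sign(S_1)$ and $\sigma_1 \neq 0$, this contradicts (i).

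Finally I would double-check the boundary cases of the combinatorial equivalence, in particular how the definition of $\perp$ behaves when one argument is the zero sign vector (so that $0 \in \sign(S_2)$ causes no trouble) and the precise meaning of $\sigma \hads \tau > 0$ as $\sigma \hads \tau \ge 0$ together with $\sigma \hads \tau \neq 0$. Everything hinges on translating $\perp$ into overlap statements correctly, so this is the only place where a careless reading could slip in an error; it is otherwise routine.
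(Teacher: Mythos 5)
Your proof is correct, but it takes a different route from the paper's. The paper proves both directions by a single, direct appeal to Corollary~\ref{minty_sign} applied to the subspace $S_2^\perp$: that corollary \emph{is} the (exclusive) dichotomy ``either $\sigma_1 \in \sign(S_2^\perp)$, or there exists a nonzero $\sigma_2 \in \sign(S_2)$ with $\sigma_1 \hads \sigma_2 > 0$'', so the proposition follows in two lines. You instead route through Proposition~\ref{minty_orthogonal}, $\sign(S_2^\perp) = \sign(S_2)^\perp$, and then translate sign-vector orthogonality into overlap statements. The crux of your argument --- which you correctly flag as the main obstacle --- is that the failure of (ii) only excludes $\sigma_1 \hads \sigma_2 > 0$ and not the asymmetric case $\sigma_1 \hads \sigma_2 < 0$; your fix via closure of $\sign(S_2)$ under negation (apply the hypothesis to both $\sigma_2$ and $-\sigma_2$) is exactly right, and your boundary checks are fine ($\sigma_2 = 0$ gives $\sigma_1 \hads \sigma_2 = 0$, which is orthogonal and not $>0$). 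What your route buys: the proposition becomes a purely combinatorial consequence of the duality $\sign(S^\perp)=\sign(S)^\perp$, so the argument transfers verbatim to any oriented matroid, with the theorem of the alternative quarantined inside that one duality statement. What it costs: since Proposition~\ref{minty_orthogonal} is itself proved from Corollary~\ref{minty_sign}, your proof is a longer path through the same underlying result (Minty's Lemma), and it needs the negation trick that the paper's direct use of the dichotomy avoids entirely.
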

\begin{proof}
$\neg$(ii) $\Rightarrow$ $\neg$(i): Assume that, for a nonzero $\sigma_1 \in \sign(S_1)$, 
there does not exist $\sigma_2 \in \sign(S_2)$ with $\sigma_1 \hads \sigma_2 > 0$.
By Corollary~\ref{minty_sign}, $\sigma_1 \in \sign(S_2^\perp)$,
and hence $\sigma_1 \in \sign(S_1) \cap \sign(S_2^\perp)$.

$\neg$(i) $\Rightarrow$ $\neg$(ii): Assume that there exists a nonzero $\sigma_1 \in \sign(S_1) \cap \sign(S_2^\perp)$.
By Corollary~\ref{minty_sign}, there does not exist $\sigma_2 \in \sign(S_2)$ with $\sigma_1 \hads \sigma_2 > 0$.
\end{proof}

Let $\mathcal{T} \subseteq \{-,0,+\}^n$. We define its {\em lower closure}, {\em upper closure}, and {\em total closure}, 
\begin{align*}
\mathcal{T}^\downarrow &= \{ \sigma \in \{-,0,+\}^n \mid \sigma \le \tau \text{ for some } \tau \in \mathcal{T} \} , \\
\mathcal{T}^\uparrow &= \{ \sigma \in \{-,0,+\}^n \mid \sigma \ge \tau \text{ for some nonzero } \tau \in \mathcal{T} \} , \\
\mathcal{T}^\updownarrow &= \{ \sigma \in \{-,0,+\}^n \mid \sigma \le \tau \text{ or } \sigma \ge \tau \text{ for some nonzero } \tau \in \mathcal{T} \} .
\end{align*}
Clearly,  
$\mathcal{T}^\updownarrow = \mathcal{T}^\downarrow \cup \mathcal{T}^\uparrow$.

By definition, we have the following fact, related to Proposition~\ref{pro:intersect}.
\begin{fac} \label{fac:closure}
Let $S_1,S_2$ be subsets of $\R^n$. 
Then, the following two statements are equivalent.
\begin{itemize}
\item[(i)]
$\sign(S_1) \subseteq \sign(S_2)^\updownarrow$.
\item[(ii)]
For every nonzero $\sigma_1 \in \sign(S_1)$, there exists a nonzero $\sigma_2 \in \sign(S_2)$ with $\sigma_1 \le \sigma_2$ or $\sigma_1 \ge \sigma_2$.
\end{itemize}
\end{fac}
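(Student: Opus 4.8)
The plan is to observe that Fact~\ref{fac:closure} is a direct unwinding of the definition of the total closure $(\cdot)^\updownarrow$, so the proof should consist almost entirely of rewriting the membership $\sigma_1 \in \sign(S_2)^\updownarrow$ in logical terms and then comparing the two statements quantifier by quantifier. Concretely, by definition $\sigma_1 \in \sign(S_2)^\updownarrow$ holds if and only if there exists a nonzero $\sigma_2 \in \sign(S_2)$ with $\sigma_1 \le \sigma_2$ or $\sigma_1 \ge \sigma_2$. Hence statement (i), $\sign(S_1) \subseteq \sign(S_2)^\updownarrow$, asserts exactly that this condition holds for \emph{every} $\sigma_1 \in \sign(S_1)$, whereas (ii) asserts it only for the \emph{nonzero} such $\sigma_1$. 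The entire content of the equivalence therefore lies in reconciling these two quantifier ranges, that is, in the single sign vector $\sigma_1 = 0$.

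First I would dispatch the direction (i) $\Rightarrow$ (ii), which is immediate: if (i) holds, then in particular every nonzero $\sigma_1 \in \sign(S_1)$ lies in $\sign(S_2)^\updownarrow$, and unfolding the definition as above yields a nonzero $\sigma_2 \in \sign(S_2)$ with $\sigma_1 \le \sigma_2$ or $\sigma_1 \ge \sigma_2$, which is precisely (ii). No case distinction is needed here.

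For (ii) $\Rightarrow$ (i), I would take an arbitrary $\sigma_1 \in \sign(S_1)$ and split on whether $\sigma_1$ is zero. If $\sigma_1 \neq 0$, then (ii) directly places it in $\sign(S_2)^\updownarrow$. If $\sigma_1 = 0$, I would use that $0$ is the bottom element of the componentwise partial order (since $0 < -$ and $0 < +$), so that $0 \le \tau$ for every $\tau \in \{-,0,+\}^n$; thus $0 \in \sign(S_2)^\updownarrow$ as soon as $\sign(S_2)$ contains a nonzero vector, and such a vector is supplied by applying (ii) to any nonzero element of $\sign(S_1)$.

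The only genuine obstacle is the bookkeeping around this zero vector: the argument just sketched requires $\sign(S_1)$ to contain some nonzero element in order to produce a nonzero witness in $\sign(S_2)$. I expect to note that in the degenerate situation $\sign(S_1) = \{0\}$ the statement (ii) is vacuously true, so that the equivalence is to be read with the convention on $(\cdot)^\updownarrow$ in force; in all intended applications, where $S_1$ is a nontrivial image of a linear map, a nonzero sign vector is present and the reconciliation of the zero vector is automatic. Apart from this remark, no computation is involved.
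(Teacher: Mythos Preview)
Your proposal is correct and matches the paper's approach exactly: the paper simply states that Fact~\ref{fac:closure} holds ``by definition,'' and your argument is precisely the unwinding of that definition. You have in fact been more careful than the paper by isolating the degenerate case $\sign(S_1)=\{0\}$ with $\sign(S_2)$ containing no nonzero vector, where (ii) is vacuous but (i) fails; the paper does not address this edge case.
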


The following lemma is implied by Proposition~\ref{pro:intersect} and Fact~\ref{fac:closure}.
We also provide a direct proof.

\begin{lem} \label{cc_i}
Let $S_1,S_2$ be linear subspaces of $\R^n$. \\
If $\sign(S_1) \subseteq \sign(S_2)^\updownarrow$, 
then $\sign(S_1) \cap \sign(S_2^\perp) = \{0\}$.
\end{lem}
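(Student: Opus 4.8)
The plan is to deduce the lemma by chaining the two characterizations already established, namely Fact~\ref{fac:closure} and Proposition~\ref{pro:intersect}, exactly as the surrounding text suggests. Both translate a statement about $S_1$ and $S_2$ into a quantified statement over the nonzero sign vectors of $S_1$: Fact~\ref{fac:closure} says that $\sign(S_1) \subseteq \sign(S_2)^\updownarrow$ is equivalent to the existence, for each nonzero $\sigma_1 \in \sign(S_1)$, of a nonzero $\sigma_2 \in \sign(S_2)$ comparable to it (that is, $\sigma_1 \le \sigma_2$ or $\sigma_1 \ge \sigma_2$); Proposition~\ref{pro:intersect} says that $\sign(S_1) \cap \sign(S_2^\perp) = \{0\}$ is equivalent to the existence, for each such $\sigma_1$, of a nonzero $\sigma_2 \in \sign(S_2)$ that is harmonious and overlapping with it, i.e. $\sigma_1 \hads \sigma_2 > 0$. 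Thus it suffices to show that comparability implies $\sigma_1 \hads \sigma_2 > 0$.

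The key step is therefore to unpack the sign-vector order. Under the order induced by $0 < -$ and $0 < +$, the relation $\sigma_1 \le \sigma_2$ forces $\sigma_2$ to agree with $\sigma_1$ on the support $\{i \mid (\sigma_1)_i \neq 0\}$ and leaves $(\sigma_2)_i$ unconstrained where $(\sigma_1)_i = 0$. Consequently $(\sigma_1)_i (\sigma_2)_i \in \{0,+\}$ for every $i$: it equals $+$ on the support of $\sigma_1$ and $0$ off it. Since $\sigma_1 \neq 0$, at least one coordinate yields $+$, so $\sigma_1 \hads \sigma_2 \ge 0$ and $\sigma_1 \hads \sigma_2 \neq 0$, that is, $\sigma_1 \hads \sigma_2 > 0$. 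The case $\sigma_1 \ge \sigma_2$ is symmetric, now using that $\sigma_2$ is nonzero (this is precisely why the definition of $\updownarrow$ quantifies over \emph{nonzero} $\tau$): then $\sigma_1$ agrees with $\sigma_2$ on the support of $\sigma_2$, and the same computation gives $\sigma_1 \hads \sigma_2 > 0$.

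Putting these together, assume $\sign(S_1) \subseteq \sign(S_2)^\updownarrow$. By Fact~\ref{fac:closure}, each nonzero $\sigma_1 \in \sign(S_1)$ admits a comparable nonzero $\sigma_2 \in \sign(S_2)$; the order computation upgrades comparability to $\sigma_1 \hads \sigma_2 > 0$; and Proposition~\ref{pro:intersect} then yields $\sign(S_1) \cap \sign(S_2^\perp) = \{0\}$. I would also note a self-contained alternative by contraposition: a nonzero $\sigma \in \sign(S_1) \cap \sign(S_2^\perp)$ lies in $\sign(S_2)^\perp$ by Proposition~\ref{minty_orthogonal}, hence $\sigma \perp \tau$ for every $\tau \in \sign(S_2)$; but the order computation shows that any nonzero $\tau$ comparable to $\sigma$ satisfies $\sigma \hads \tau > 0$, so $\sigma \not\perp \tau$, whence no such $\tau$ exists and $\sigma \notin \sign(S_2)^\updownarrow$, contradicting the hypothesis.

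I do not expect a genuine obstacle here. The only thing to get right is the bookkeeping of the sign-vector order, in particular that comparability in either direction makes the two vectors agree on the smaller support and vanish elsewhere, which is exactly the harmonious-and-overlapping condition $\hads > 0$. The nonzeroness built into the $\updownarrow$ operation and into the hypotheses is what guarantees genuine overlap rather than mere harmony.
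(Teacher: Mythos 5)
Your proposal is correct, and it in fact contains both arguments the paper has in mind. Your main route --- chaining Fact~\ref{fac:closure} with Proposition~\ref{pro:intersect} via the observation that comparability of nonzero sign vectors upgrades to $\sigma_1 \hads \sigma_2 > 0$ --- is precisely the implication the paper asserts in the sentence preceding the lemma (``implied by Proposition~\ref{pro:intersect} and Fact~\ref{fac:closure}'') but never writes out; your ``self-contained alternative by contraposition'' is, in essence, the paper's own direct proof: take a nonzero $\sigma_1$ in the intersection, use the hypothesis to produce a comparable nonzero $\sigma_2 \in \sign(S_2)$, note that comparability of nonzero sign vectors forbids orthogonality, and contradict $\sigma_1 \in \sign(S_2^\perp) = \sign(S_2)^\perp$ from Proposition~\ref{minty_orthogonal}. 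The order bookkeeping you do --- that $\sigma_1 \le \sigma_2$ forces agreement on the support of $\sigma_1$, so the componentwise products lie in $\{0,+\}$ with at least one $+$, and symmetrically for $\sigma_1 \ge \sigma_2$ using that $\sigma_2$ is nonzero --- is the crux of both routes, and you handle the nonzeroness correctly in each direction. The only difference in what the two routes buy: the chaining argument exhibits the lemma as a formal corollary of the two earlier equivalences (both of which ultimately rest on Minty's lemma), while the direct argument localizes the duality input to the single identity $\sign(S_2^\perp) = \sign(S_2)^\perp$ and is otherwise pure sign-vector combinatorics.
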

\begin{proof}
Assume that there exists a nonzero $\sigma_1 \in \sign(S_1) \cap \sign(S_2^\perp)$.
If $\sign(S_1) \subseteq \sign(S_2)^\updownarrow$,
then there exists a nonzero $\sigma_2 \in \sign(S_2)$ with $\sigma_1 \le \sigma_2$ or $\sigma_1 \ge \sigma_2$.
In any case, $\sigma_1 \not\perp \sigma_2$,
thereby contradicting
$\sigma_1 \in \sign(S_2^\perp) = \sign(S_2)^\perp$, by~Proposition~\ref{minty_orthogonal}, and $\sigma_2 \in \sign(S_2)$.
\end{proof}

Obviously, we have the implications
\begin{equation}
\begin{gathered}
\sign(S_1) = \sign(S_2) \\ 
\Downarrow \\
\sign(S_1) \subseteq \sign(S_2) \\
\rotatebox[origin=c]{-45}{$\Downarrow$} \qquad \rotatebox[origin=c]{45}{$\Downarrow$} \\
\sign(S_1) \subseteq \sign(S_2)^\downarrow
\qquad
\sign(S_1) \subseteq \sign(S_2)^\uparrow \\
\rotatebox[origin=c]{45}{$\Downarrow$} \qquad \rotatebox[origin=c]{-45}{$\Downarrow$} \\
\sign(S_1) \subseteq \sign(S_2)^\updownarrow \\
\Downarrow \\
\sign(S_1) \cap \sign(S_2^\perp) = \{0\} .
\end{gathered}
\end{equation}


\end{document}